\documentclass[a4paper,	11pt]{article}

\usepackage{bigints}

\usepackage[english]{babel} 
\usepackage[T1]{fontenc} 
\usepackage[babel=true]{csquotes} 
\usepackage{aeguill} 

\usepackage{latexsym}
\usepackage{comment}
\usepackage{natbib}

\usepackage{appendix}
\usepackage[normalem]{ulem} 
\usepackage{hyperref}
\usepackage{upgreek}
\usepackage{amssymb}
\usepackage{amsmath}
\usepackage{amsmath,mathrsfs}
\usepackage{amsthm} 
\usepackage{mathtools}
\usepackage{graphicx}
\usepackage{empheq}
\usepackage{color}
\usepackage{empheq}
\usepackage{float}
\usepackage{caption}
\usepackage{subcaption}
\usepackage{xcolor}
\usepackage{adjustbox}
\usepackage{listings}
\usepackage{algorithm}
\usepackage{algpseudocode}
\usepackage[algo2e,ruled,vlined]{algorithm2e} 
\usepackage{algorithmicx}
\usepackage{bbm}
\usepackage{enumitem}
\usepackage{mathrsfs}

\definecolor{dkgreen}{rgb}{0,0.6,0}
\definecolor{gray}{rgb}{0.5,0.5,0.5}
\definecolor{mauve}{rgb}{0.58,0,0.82}

\def\1{\mathbf{1}}
\def \d{\mathrm{d}} 
\def\eps{\varepsilon} 

\def\d{{\rm d}}

\def \N{\mathbb{N}}

\def \E{\mathbb{E}}
\def \F{\mathbb{F}}

\def \L{\mathbb{L}}
\def \P{\mathbb{P}}
\def \Q{\mathbb{Q}}

\def \R{\mathbb{R}}
\def \S{\mathbb{S}}
\def \tr{\text{tr}}
\def \Law{\text{Law}}
\def \dist{\text{dist}}
\def \conc{\text{conc}}
\newcommand{\conv}{\mathrm{conv}}

\makeatletter \@addtoreset{equation}{section}

\usepackage{amsmath, amssymb}
\usepackage{graphicx}
\usepackage{color}
\usepackage{textcomp}
\usepackage{graphics}
\usepackage{float}
\usepackage{empheq}
\usepackage{caption}
\usepackage{subcaption}
\usepackage{mathrsfs}

\usepackage{comment}

\usepackage{geometry}
\usepackage{stmaryrd}

\AtBeginDocument{}
\AtBeginDocument{}

\def \ep{\hbox{ }\hfill{ ${\cal t}$~\hspace{-5.1mm}~${\cal u}$}}

\def\b*{\begin{eqnarray*}}
\def\e*{\end{eqnarray*}}

\newcommand{\be}{\begin{equation}}
\newcommand{\ee}{\end{equation}}

\newtheorem{theorem}{Theorem}[section]
\newtheorem{lemma}[theorem]{Lemma}   
\newtheorem{proposition}[theorem]{Proposition}

\newtheorem{rem}[theorem]{Remark}

\mathtoolsset{showonlyrefs}

\newcommand{\bea}{\begin{eqnarray}}
\newcommand{\bes}{\begin{subEquations}}
\newcommand{\ees}{\end{subEquations}}
\newcommand{\bgt}{\begin{gather}}
\newcommand{\egt}{\begin{gather}}
\newcommand{\eea}{\end{eqnarray}}
\newcommand{\beaa}{\begin{eqnarray*}}
\newcommand{\eeaa}{\end{eqnarray*}}

\def \Cc{{\cal C}}

\def \Fc{{\cal F}}

\def \Pc{{\cal P}}

\def \Mc{{\cal M}}
\def \Nc{{\cal N}}

\def \Tc{{\mathbf T}}

\def \Vc{{\cal V}}

\def \msY{{\cal Y}}

\def \Leb{\text{Leb}}

\def \msX{\mathscr{X}} 
\def \msY{\mathscr{Y}} 

\def \msD{\mathscr{D}} 

\def \d{\mathrm{d}} 
\def\eps{\varepsilon}

\def\b*{\begin{eqnarray*}}
\def\e*{\end{eqnarray*}}
\def\be{\begin{eqnarray}}
\def\ee{\end{eqnarray}}

\numberwithin{equation}{section}

\begin{document}

\title{Bridging Schr\"odinger and Bass: A Semimartingale Optimal Transport Problem with Diffusion Control}

\author{ 
Pierre HENRY-LABORDERE \footnote{QubeRT. Email: phl@hotmail.com}
\and Grégoire LOEPER \footnote{BNPP  and  Monash University. Email: gregoire.loeper@bnpparibas.com}  
\and Othmane MAZHAR \footnote{LPSM, Sorbonne Université and Universit\'e Paris Cit\'e. This author was supported by the BNP-Paribas 
Chair ``Futures of Quantitative Finance''. Email: othmane.xx90@gmail.com}
\and Huy\^en PHAM  \footnote{Ecole Polytechnique, CMAP. This author is supported by the Chair ``Financial Risks'', by FiME (Laboratory of Finance and Energy Markets), and the EDF–CACIB Chair ``Finance and Sustainable Development''. Email: huyen.pham@polytechnique.edu}
\and Nizar TOUZI \footnote{NYU Tandon School of Engineering. This author is partially supported by NSF grant $\#$DMS-2508581. Email: nizar.touzi@nyu.edu}
}

\maketitle

\begin{abstract}
We study a semimartingale optimal transport problem interpolating between the Schrödinger bridge and the stretched Brownian motion associated with the Bass solution of the Skorokhod embedding problem. The cost combines an entropy term on the drift with a quadratic penalization of the diffusion coefficient, leading to a stochastic control problem over drift and volatility.
 
We establish a complete duality theory for this problem, despite the lack of coercivity in the diffusion component. In particular, we prove strong duality and dual attainment, and derive an equivalent reduced dual formulation in terms of a variational problem over terminal potentials.

Optimal solutions are characterized by a coupled Schr\"odinger–Bass bridge system, involving a backward heat potential and a transport map given by the gradient of a convex function. This system interpolates between the classical Schr\"odinger system and the Bass martingale transport.
Our results provide a unified framework encompassing entropic and martingale optimal transport, and yield a variational foundation for data-driven diffusion models. 
 
\end{abstract}

\vspace{5mm}

\noindent {\bf Keywords}: Optimal transport along It\^o processes, Schr\"odinger bridge, Bass martingale, dual potentials, Schr\"odinger–Bass bridge system. 

\vspace{5mm}

\noindent {\bf MSC Classification}: Primary: 49Q22, 60H30.
Secondary: 93E20, 60J60, 60G44.

 \tableofcontents 

\section{Introduction}

Classical Schrödinger bridges and martingale optimal transport provide two fundamental constructions of optimal couplings of probability measures via continuous-time stochastic processes. In the Schrödinger problem, one fixes a reference Brownian motion and prescribes initial and terminal distributions, then minimises the relative entropy of the law of the process with respect to the reference path measure. This leads to a stochastic control problem with controlled drift and fixed diffusion coefficient, admitting a well-posed dual formulation and a characterisation in terms of the Schrödinger system, see \cite{Leo14},  \cite{chenetal21}, \cite{nutz22}.  In contrast, martingale optimal transport imposes a martingale constraint and allows for non-trivial control of the diffusion coefficient. The Bass problem, whose optimiser is the stretched Brownian motion, consists in prescribing marginal laws of a continuous martingale while minimising a deviation from Brownian motion; see \cite{ConzeHenryLabordere2019,bacetal23b,acciaio2023localvolatility}. This formulation is closely related to martingale Benamou–Brenier transport \cite{bacetal20} and plays a central role in model-independent finance. In contrast with the Schrödinger problem, it exhibits degeneracies and lacks a regular dual structure.

The aim of this paper is to introduce and analyse a class of optimal semimartingale transport problems that interpolate between these two regimes. Given $\mu_0,\mu_T\in\Pc_2(\R^d)$, we consider It\^o processes
$X=(X_t)_{0\le t\le T}$ with $X_0\sim\mu_0$, $X_T\sim\mu_T$, and we minimise
a functional which penalises both the drift and the volatility of $X$,
weighted by a parameter $\beta$ $>$ $0$. We denote the resulting value by ${\rm SBB}(\mu_0,\mu_T)$
and refer to this optimisation problem as the Schr\"odinger–Bass bridge
(SBB) problem. As $\beta\to\infty$, one recovers the Schr\"odinger bridge, while as $\beta\to 0$, the problem reduces to a Bass-type martingale transport. The SBB problem therefore provides a unified stochastic control formulation of entropic and martingale transport.

 The main difficulty in the analysis stems from the lack of coercivity in the diffusion variable. In contrast with the framework of controlled diffusions studied in \cite{TanTouzi2013}, \cite{guo2021path},  the cost functional does not penalise large diffusion coefficients in a coercive manner, and standard compactness and duality arguments do not apply directly.

 Our first main result establishes strong duality for the SBB problem. We prove that the primal stochastic control problem admits a minimiser and that its value coincides with that of a dual problem over functions $v\in C^{1,2}$ solving a fully nonlinear Hamilton–Jacobi–Bellman equation under a curvature constraint $D^2 v<\beta I_d$. Moreover, the HJB equation admits an explicit representation in terms of a terminal potential $\phi$ through a quadratic inf–convolution operator, leading to an equivalent static dual formulation of Donsker–Varadhan type. We show that the  static dual problem attains its supremum over a suitable relaxed space. 

Our second main result is a complete characterisation of the solution to SBB. We show that the optimal SBB bridge is encoded by a triplet $(h,\nu,\msY)$ solving a coupled system, which we call the Schr\"odinger–Bass bridge system. The functions $h$ and $\nu$ solve backward and forward heat equations, while the map $\msY$ is the gradient of a quadratic inf–convolution transform. The marginal laws $(\mu_t)_{t}$ of the optimal process $(X_t)_t$ satisfy
\begin{align}
\msY_t \#\mu_t & = \; h_t \nu_t, \quad \mbox{ i.e. } \quad \mu_t \;  = \: \msX_t\#(h_t \nu_t), \quad 0 \leq t \leq T,   
\end{align}
where $\#$ denotes the push-forward operation,  $\msX_t$ $=$ $\msY_t^{-1}$ is the inverse of the homeomorphism $\msY_t$, and is given by the gradient of a convex function:
\begin{align}
\msX_t(y) &= \; \nabla_y \Big( \frac{|y|^2}{2} + \frac{1}{\beta} \log h_t(y) \Big) \; = \; y + 
\frac{1}{\beta} \nabla_y \log h_t(y), 
\end{align}
thus $\msX_t$ is expressed in terms of the score of the potential density $h_t$. 
This system reduces to the classical Schr\"odinger system as $\beta\to\infty$ and to the Bass martingale transport system as $\beta\to 0$.

Finally, we identify the structure of the optimal semimartingale. The optimal drift and diffusion coefficients admit explicit feedback representations in terms of $(h,\msY)$. Moreover, after the change of variables $Y_t$ $=$ $\msY_t(X_t)$, $0\leq t\leq T$, the process $Y$ is a Schr\"odinger bridge diffusion and becomes a Brownian motion under an equivalent change of measure. This yields a stretched Brownian representation of the optimal SBB bridge, extending the Bass construction from martingales to general semimartingales. 

The paper is organised as follows. Section \ref{sec:SBB} introduces the SBB problem and derives a first dual formulation. Section \ref{sec:main} states the main results. Sections \ref{sec:duality} and \ref{sec:dualreduced} establish duality and analyse the reduced dual problem. Sections \ref{sec:dualattainment} and \ref{sec:primal} prove dual and primal attainment and derive the structure of optimal solutions.
\footnote{During the final stage of the preparation of this paper, G. Pammer  brought to our attention his work in \cite{pammer} with M. Hasenbichler and S. Thonhauser, motivated by early presentations of the results reported here, in which they analyze the same problem through the lens of weak  optimal transport.} 

\section{Schr\"odinger–Bass bridge optimal transport} \label{sec:SBB}

\subsection{Problem formulation} 
\label{sec: SB}

Let $T>0$ be some finite maturity and let $\Omega=C([0,T],\R^d)$ be the canonical space equipped with its canonical filtration $\F$ $=$ $(\Fc_t)_t$ and canonical process $X$, i.e. $X_t(\omega)=\omega(t)$ for all $\omega\in\Omega$, $t\in [0,T]$. We denote by $\Pc$ the set of probability measures $\P$ on  $\Omega$ under which $X$ has the diffusion decomposition
 \begin{align} \label{decdiff} 
 X_t &= \   X_0 + \int_0^t \alpha_s^\P \d s + \int_0^t \sigma_s^\P \d W_s^\P, \qquad  t \in [0,T], \  \P-\mbox{a.s.} 
 \end{align} 
 for some $d$-dimensional $\P-$Brownian motion $W^\P$, and some characteristics $\gamma^\P=(\alpha^\P,\sigma^\P)$ that are $\F$-progressively measurable processes valued in $\R^d\times\S_+^d$,  satisfying $\int_0^T |\alpha_t^\P|\d t+\int_0^T |\sigma_t^\P|^2 \d t<\infty$, $\P$-a.s. 
 
Given two distributions $\mu_0,\mu_T\in\Pc_2(\R^d)$, we introduce the subset of transport plans  
  \begin{equation}
 \Pc(\mu_0,\mu_T) \ =\  \big\{ \P \in \Pc:  \P \circ X_0^{-1} = \mu_0, \  \mbox{ and } 
 \  \P \circ X_T^{-1} = \mu_T \big\}.
  \end{equation}
Given a parameter $\beta>0$, and denoting by $I_d$ the identity matrix in $\R^{d\times d}$, we introduce the cost function
\begin{equation}\label{def:c}
c(a,b):=\frac12|a|^2+\frac{\beta}2|b - I_d |^2,
~a\in\R^d,~b\in\S_+^d.
\end{equation}
Our objective in this paper is to derive a full characterization of a critical transport plan obtained through the following optimal transport problem on the canonical path space:
 \begin{equation}\label{SBB}
{\rm SBB}(\mu_0,\mu_T) 
:=
\inf_{\P\in\Pc(\mu_0,\mu_T)} J^0(\P),
~\mbox{with}~
J^0(\P) 
:=
\E^{\P} \Big[  \int_0^Tc(\gamma_t^\P)\d t \Big].
 \end{equation}
Here the SBB acronym stands for {\it Schr\"odinger–Bass bridge}, and is justified by the two extreme cases $\beta\to 0$ and $\beta\to\infty$:  
\begin{itemize}
\item Formally, when $\beta$ goes to infinity, we constrain to those transport plans in the subset $\Pc_0(\mu_0,\mu_T):=\{\P\!\in\!\Pc(\mu_0,\mu_T):\sigma^\P=I_d~\mbox{on}~[0,T]\}$; the problem SBB reduces to the classical {\it Schr\"odinger bridge} problem which aims to find the closest transport plan $\P\in\Pc_0(\mu_0,\mu_T)$ to the Brownian path measure with initial law $\mu_0$ in the sense of the relative entropy (Kullback-Leibler) distance, see, e.g. \cite{Leo14},
$$
{\rm SB}(\mu_0,\mu_T) 
:=
\inf_{\P\in\Pc_0(\mu_0,\mu_T)} \E^{\P} \Big[  \frac12\int_0^T|\alpha_t^\P|^2\d t \Big].
$$
\item In the other extreme case, by dividing the criterion $J^0$ by $\beta$, and sending $\beta$  to zero,  we formally constrain the drift coefficient to be zero, and then we are looking for a continuous martingale, namely the stretched Brownian motion/Bass martingale in the appropriate setting, which is closest to Brownian motion according to the quadratic volatility norm, under marginal constraints. This martingale transport problem was studied in Backhoff-Veraguas, Beiglb\"ock, Huesmann \& K\"allblad 
\cite{bacetal20}, Backhoff-Veraguas, Schachermayer \& Tschilderer \cite{bacetal23b}, who called its solution the {\it Stretched Brownian motion} and which is intimately related to the Bass  martingale with degenerate initial law. The latter was motivated by calibration problems in financial engineering, and was first studied by Conze \& Henry-Labord\`ere \cite{ConzeHenryLabordere2019} and Acciaio, Pammer \& Marini \cite{acciaio2023localvolatility}. 
\end{itemize}
The solution (when it exists) $\hat{\P}$ of ${\rm SBB}(\mu_0,\mu_T)$ is called the Schr\"odinger–Bass bridge (SBB) transport plan. We start with some initial considerations which will underpin the subsequent analysis of the problem SBB.

\begin{rem}\label{rem:lin} The linear coupling $\P^\pi_{\rm lin}\in\Pc(\mu_0,\mu_T)$ {\rm is defined for all static coupling $\pi\in\Pi(\mu_0,\mu_T)$ as follows. Let $(Y_0,Y_T)$ be a random vector on some probability space, with law $\pi$, and let $\P^\pi_{\mathrm{lin}} := \text{Law}(Y)$ be the law of the linear interpolation $Y_t :=\frac{T-t}{T}Y_0+\frac{t}{T}Y_T,$ $t\in[0,T]$. Back to our notations on the canonical space, we observe that $\d X_t=\alpha^{\P^\pi_{\mathrm{lin}}}_t\d t+0\ \d W_t,$ $\P^\pi_{\mathrm{lin}}-$a.s. with corresponding
$\alpha^{\P^\pi_{\mathrm{lin}}}_t:=\frac{X_T-X_0}{T},\  0\le t\le T$,
and, for $t>0$, equivalently
$\alpha^{\P^\pi_{\mathrm{lin}}}_t=\frac{X_t-X_0}{t}$ satisfying $J^0(\P^\pi_{\mathrm{lin}})=\frac12\E\int_0^T|\alpha^{\P^\pi_{\mathrm{lin}}}_t|^2dt + \frac{\beta d T}{2}=\frac12T^{-1}\E|X_T-X_0|^2+ \frac{\beta d T}{2}<\infty$. Consequently $\P^\pi_{\mathrm{lin}}\in\Pc(\mu_0,\mu_T)$.
}
\end{rem}

\begin{lemma}\label{lem:supX2}
${\rm SBB}(\mu_0,\mu_T)\in[0,\infty)$, and $\E^\P[\sup_{t\le T}|X_t|^2]<\infty$ for every $\P\in\Pc(\mu_0,\mu_T)$ with $J^0(\P)<\infty$.
\end{lemma}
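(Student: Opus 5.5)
The two claims are handled separately; both are elementary.

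\emph{Finiteness of the value.} Since the cost $c$ in \eqref{def:c} is nonnegative, $J^0(\P)\ge 0$ for every $\P\in\Pc$, so ${\rm SBB}(\mu_0,\mu_T)\ge 0$. For the upper bound I use Remark \ref{rem:lin} with the independent coupling $\pi=\mu_0\otimes\mu_T\in\Pi(\mu_0,\mu_T)$. It gives $\P^\pi_{\rm lin}\in\Pc(\mu_0,\mu_T)$ with
$$J^0(\P^\pi_{\rm lin})=\frac1{2T}\E|X_T-X_0|^2+\frac{\beta dT}{2}\le \frac1T\Big(\int|x|^2\mu_0(\d x)+\int|y|^2\mu_T(\d y)\Big)+\frac{\beta dT}2<\infty,$$
because $\mu_0,\mu_T\in\Pc_2(\R^d)$. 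Hence ${\rm SBB}(\mu_0,\mu_T)\le J^0(\P^\pi_{\rm lin})<\infty$.

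\emph{Second moment of the running supremum.} Fix $\P\in\Pc(\mu_0,\mu_T)$ with $J^0(\P)<\infty$. From \eqref{decdiff}, for every $t\le T$,
$$\sup_{t\le T}|X_t|^2\le 3|X_0|^2+3\Big(\int_0^T|\alpha^\P_s|\d s\Big)^2+3\sup_{t\le T}\Big|\int_0^t\sigma^\P_s\d W^\P_s\Big|^2.$$
Each term is controlled as follows.
\begin{itemize}
\item The first term has finite expectation since $X_0\sim\mu_0\in\Pc_2$.
\item For the drift term, Cauchy--Schwarz gives $\big(\int_0^T|\alpha_s|\d s\big)^2\le T\int_0^T|\alpha_s|^2\d s$, whose expectation is at most $2T\,J^0(\P)$.
\item For the diffusion term, $|b|^2\le 2|b-I_d|^2+2d$ gives
$$\E\int_0^T|\sigma_s|^2\d s\le \frac4\beta J^0(\P)+2dT<\infty.$$
The stochastic integral is therefore a square-integrable martingale. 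Doob's $L^2$ maximal inequality (or BDG) then bounds the expectation of its squared supremum by $4\,\E\int_0^T|\sigma_s|^2\d s<\infty$.
\end{itemize}

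No real obstacle arises; the argument relies on the a.s. integrability assumed in the definition of $\Pc$. The only point needing care is applying Doob's inequality to a local martingale. I would localize with stopping times $\tau_n$ reducing the stochastic integral, apply the maximal inequality on $[0,\tau_n]$, and pass to the limit by monotone convergence, using the finite bound on $\E\int_0^T|\sigma_s|^2\d s$.
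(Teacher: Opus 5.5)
Your proof is correct and follows the paper's own argument: finiteness of the value comes from the linear-interpolation plan of Remark \ref{rem:lin}, and the second-moment bound comes from a BDG/Doob estimate in which $\E\int_0^T(|\alpha_t^\P|^2+|\sigma_t^\P|^2)\,\d t$ is controlled by $J^0(\P)$. The only difference is that you make explicit the details the paper leaves implicit: the nonnegativity of $c$, the choice of the independent coupling, the Cauchy--Schwarz step on the drift, and the bound $|b|^2\le 2|b-I_d|^2+2d$.
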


\proof
By Remark \ref{rem:lin}, we have ${\rm SBB}(\mu_0,\mu_T)\le J^0(\P_{\mathrm{lin}})<\infty$. Next, for $\P\in\Pc(\mu_0,\mu_T)$ with $J^0(\P)<\infty$, it follows from the BDG inequality that $\E^\P[\sup_{t\le T}|X_t|^2]\le C(1+\E^\P[|X_0|^2]+\E^\P\!\int_0^T\big(|\alpha_t^\P|^2+|\sigma_t^\P|^2\big)\d t)\le C'(1+\E^\P[|X_0|^2]+J^0(\P))<\infty$, for some constants $C,C'>0$.
\ep

\subsection{A first dual problem}

The problem SBB is a semimartingale optimal transport problem in the sense of \cite{TanTouzi2013}, see also \cite{guo2021path}. Notice however that our cost function does not satisfy the coercivity condition in $b$ that is required in \cite{TanTouzi2013}. Despite this, we shall justify that the linear programming duality still holds in our setting and we may then express the constrained control problem SBB in terms of the penalized unconstrained control problem.

\medskip
Let $w:=1+|.|^2$ and let $\Cc_w:=\{\psi\in C^0(\R^d,\R):|\frac{\psi}{w}|_\infty<\infty\}$ be the collection of all continuous functions with quadratic growth on $\R^d$. As $\mu_T\in\Pc_2(\R^d)$, we have
$$
\Pc(\mu_0,\mu_T)
=
\big\{\P\in\Pc(\mu_0): \E^\P[\psi(X_T)]=\mu_T(\psi)
                                   ~\mbox{for all}~\psi\in \Cc_w
\big\},
$$
with $\Pc(\mu_0):=\cup_{\nu\in\Pc_2(\R^d)}\Pc(\mu_0,\nu)$. We may then rewrite our problem as the penalized control problem
\begin{equation}\label{Jpsi}
{\rm SBB}(\mu_0,\mu_T) 
=
\inf_{\P\in\Pc(\mu_0)}
\sup_{\psi\in\Cc_w} \mu_T(\psi) + J^\psi(\P),
~J^\psi(\P):=\E^{\P} \Big[\!-\psi(X_T)+\!\!\int_0^T\!\!\!c(\gamma_t^\P)\d t \Big].
\end{equation}
Following the standard terminology in optimal transport theory, we call the penalization maps $\psi$ {\it potential functions}.
By the trivial inequality $\inf_{\P\in\Pc(\mu_0)}
\sup_{\psi\in\Cc_w}\ge
\sup_{\psi\in\Cc_w}\inf_{\P\in\Pc(\mu_0)}$, this provides the weak duality inequality
\begin{equation}\label{weakduality1}
{\rm SBB}(\mu_0,\mu_T) 
\ge 
\sup_{\psi\in\Cc_w} 
\mu_T(\psi)
+\inf_{\P\in\Pc(\mu_0)}J^\psi(\P).
\end{equation}
We rewrite the minimization on the right-hand side as a standard stochastic control problem by expressing it as the integration of the value function of a standard stochastic control problem with respect to $\mu_0$ . To do this, we denote for all $\P\in\Pc(\mu_0)$ by $\{\P^x\}_{x\in\R^d}$ a regular conditional law of $\P$ given $X_0$, and we write by the tower property that
$
J^\psi(\P)=\int J^\psi(\P^x)\mu_0(\d x)\ge\int \inf_{\Q\in\Pc(\delta_x)}J^\psi(Q)\mu_0(\d x)
$. Combining with \eqref{weakduality1}, we see that
\begin{equation}\label{weakduality}
{\rm SBB}(\mu_0,\mu_T) 
\ \ge\  
\Vc(\mu_0,\mu_T)
\ :=\ 
\sup_{\psi\in\Cc_w} 
\mu_T(\psi)
-\mu_0(V^\psi_0),
\end{equation}
where $V^\psi_0$ is the value function of a standard stochastic control problem:
\begin{equation}\label{Vpsi}
V^\psi_0(x)
:=
\sup_{\P\in\Pc(\delta_x)}\!-J^\psi(\P)
=
\sup_{\P\in\Pc(\delta_x)}
\E^\P\Big[\psi(X_T)-\int_0^Tc(\gamma^\P_t)\ \d t\Big],
~~x\in\R^d.
\end{equation}
The penalized optimisation problem $\Vc(\mu_0,\mu_T)$ is our first dual problem. Our primary objective in this paper is to prove that there is no duality gap in \eqref{weakduality}. In addition, we shall prove existence for both primal and dual problems, together with a complete characterization of the corresponding solution which exhibits a perfect interpolation between the well-known Schr\"odinger bridge system and the Bass-Brenier transport map.

\subsection{HJB equation}

We denote by $Q_T:=[0,T)\times\R^d$ the time-space parabolic domain, and $\overline Q_T:=[0,T]\times\R^d$ its closure.
By standard optimal control theory, the HJB equation corresponding to the problem $V^\psi_0$ is:
\begin{equation}
\label{HJB}
\partial_tv+H(Dv,D^2v)
=0~\mbox{on}~Q_T,
~\mbox{and}~v(T,\cdot) =\psi~\mbox{on}~\R^d,
\end{equation}
where the Hamiltonian $H$ is given by 
\begin{align}
H(p,A)
&:=
\sup_{a\in\R^d,\ b\in\S_+^d}\Big\{a\!\cdot\!p+\frac12bb^\intercal\!:\!A-c(a,b)\Big\}
\\
&= \frac12|p|^2
      + \frac{\beta}{2}\  I_d\!:\![\beta(\beta I_d - A)^{-1}-I_d]
~\mbox{on}~{\rm dom}(H):=\{(p,A):A<\beta I_d\},
\end{align}
and $H=\infty$ outside ${\rm dom}(H)$. We record that the maximizers of the Hamiltonian are:
\begin{equation}
\widehat{a}(p)=p,~\widehat{b}(A)=\beta(\beta I_d-A)^{-1},
~(p,A)\in{\rm dom}(H).
\end{equation}
Notice that finiteness of $H(Dv,D^2
v)$ formally forces $D^2v<\beta I_d$, and we then expect that the solution of the HJB equation \eqref{HJB} exhibits a boundary layer in the sense that $\lim_{t\nearrow T}v(t,\cdot)-\frac{\beta}2|.|^2$ is concave. For this reason, it turns out that the set of potential maps can be reduced to the subset of all such {\it $\beta-$concave} maps:
$$
\Cc_w^{\rm conc}
:=\Cc_w\cap\Cc^{\rm conc}
~\mbox{where}~
\Cc^{\rm conc}:=\big\{\phi\in C^0(\R^d):\phi ~\mbox{is $\beta-$concave} \big\}.
$$
where $\phi$ is $\beta-$concave if  $\phi -\frac{\beta}{2}|\cdot|^2$ is concave. Similarly, we say that $\phi$ is $\beta-$convex if the map $\phi+\frac\beta{2}|\cdot |^2$ is convex, and we denote
$$
\Cc_w^{\rm conv}
:=\Cc_w\cap\Cc^{\rm conv},
~\mbox{with}~
\Cc^{\rm conv}:=\big\{\phi\!\in\! C^0(\R^d):\phi ~\mbox{is $\beta-$convex} \big\}
=-\Cc^{\rm conc}.
$$

\subsection{Dual potential maps and reduced dual problem}
\label{sec:dualpotential} 

Due to the particular choice \eqref{def:c} of the cost function, the HJB equation \eqref{HJB} turns out to be amenable to interesting manipulation leading to more explicit solution structure.   
We shall use extensively the Moreau transform, also called inf–convolution:
\b*
\mathbf{T}_\beta^+[\phi](x)
:=
\inf_{y\in\R^d} \phi(y)+\frac\beta{2}|x-y|^2,
&x\in\R^d.&
\e*
Notice that $\psi:=\mathbf{T}_\beta^+[\phi]$ is $\beta-$concave as $\mathbf{T}_\beta^+[\phi]-\frac\beta{2}|\cdot |^2$ is an infimum of affine functions. We also introduce the dual operator
$\mathbf{T}_\beta^-[\psi]:=-\mathbf{T}_\beta^+[-\psi]$.
We shall use the standard Moreau biconjugation property: if $\phi$ is proper, lower semicontinuous and $\beta$-convex, then
\begin{equation}\label{TbetaTbeta}
\mathbf T_\beta^-\circ\mathbf T_\beta^+[\phi]=\phi.    
\end{equation}
More generally, for arbitrary $\phi$ such that $\mathbf T_\beta^+[\phi]$ is finite-valued, the function
$\phi^\beta:=\mathbf T_\beta^-\circ\mathbf T_\beta^+[\phi]$
is $\beta$-convex, satisfies $\phi^\beta\le \phi$, and
$\mathbf T_\beta^+[\phi^\beta]=\mathbf T_\beta^+[\phi]$.
When $\phi=\mathbf{T}_\beta^-[\psi]$ for some potential $\psi$, we call $\phi$ a {\it dual potential map}.

\medskip
Our main characterization result of the SBB problem reduces the dual problem $\Vc(\mu_0,\mu_T)$ to the following optimisation problem over the set of dual potential maps:
\begin{align}\label{D}
\Vc_{\rm red}(\mu_0,\mu_T)
:=
\sup_{\phi\in \Cc^{\rm conv}_{w}}
\mathfrak{J}(\phi),
~~\mbox{with}~~&
\mathfrak{J}(\phi)
:=
\mu_T\big(\mathbf{T}_\beta^+[\phi]\big)
- \mu_0\big(\mathbf{T}_\beta^+[u_T^\phi]\big)
\\
&
e^{u_s^\phi(y)}:=\Nc_{s}*e^{\phi}(y),
~(s,y)\in\overline{Q}_T,
\nonumber
\end{align}
where $\Nc_s:=(2\pi s)^{-\frac{d}2}e^{{-\frac{|.|^2}{2s}}}$ is the heat kernel in $\R^d$, with the convention $\Nc_0*e^{\phi} = e^{\phi}$. The justification of this expression for the dual is clarified by the subsequent Lemma \ref{lem:HC_Moreau} {\it (3-b)} below, which connects $\mathbf{T}_\beta^+[u_T^\phi]$ to the HJB equation corresponding to the control problem \eqref{Vpsi}.

\begin{rem}\label{rem:dual}{\rm
The restriction of the dual functions to be $\beta-$convex in \eqref{D} can be relaxed. To see this, set
$\tilde\phi:=\mathbf{T}_\beta^-\circ\mathbf{T}_\beta^+[\phi]$.
By the above Moreau envelope property, $\tilde\phi$ is $\beta$-convex,
$\tilde\phi\le \phi$, and
$\mathbf{T}_\beta^+[\tilde\phi]=\mathbf{T}_\beta^+[\phi]$.
Hence $u_T^{\tilde\phi}\le u_T^\phi$, and therefore
$\mathfrak{J}(\tilde\phi)\ge\mathfrak{J}(\phi)$.
Thus the $\beta$-convexity restriction on the dual potential does not affect the dual maximization problem.
}
\end{rem}

\subsection{A relaxed dual formulation}

It turns out that the formulation \eqref{weakduality} of the dual problem $\Vc$ does not guarantee existence of an optimal potential map. For this reason, we need to introduce an appropriate relaxation so as to allow for existence while preserving the same value. First, from the previous considerations, we already know that it is sufficient to restrict the dual maps to those $\beta-$concave maps. As $\beta-$concave maps are bounded from above by a quadratic function, it remains to specify the asymptotic behavior of $\psi^-$ at infinity. Instead, we introduce the following relaxed dual space by enforcing an appropriate integrability condition:
\begin{equation}\label{barCconc}
\bar\Cc^{\rm conc}
:=
\Big\{\psi~\mbox{is finite-valued and $\beta-$concave}:\ \mu_0\Big(
 \mathbf{T}_\beta^+\Big[u_T^{\mathbf{T}_\beta^-[\psi]}\Big]\Big)<\infty
 \Big\},
\end{equation}
where $V^\psi_0$ for $\psi\in\bar\Cc^{\rm conc}$ is defined as in \eqref{Vpsi} with the expectation understood in the extended sense. The quantity
$\mu_T(\psi)-\mu_0(V^\psi_0)$ is also understood in the extended sense, with the convention that it is equal to $-\infty$ whenever the positive and negative parts are not both well-defined.

Our relaxed dual formulation is defined as
\begin{equation}\label{barVc}
\bar\Vc(\mu_0,\mu_T)
:=
\sup_{\psi\in\bar\Cc^{\rm conc}}
\big\{\mu_T(\psi)-\mu_0(V^\psi_0)\big\}.
\end{equation}

\section{Main results} \label{sec:main} 

The main objective of this paper is to establish strong duality between the primal problem ${\rm SBB}$ in \eqref{SBB} and the dual problem $\Vc$ in \eqref{weakduality}. To ensure existence, the latter is relaxed to \eqref{barVc}; moreover, it can be expressed in the reduced form $\Vc_{\rm red}$ defined in \eqref{D}.

\begin{theorem}\label{thm:main}
Let $\mu_0,\mu_T\in\Pc_2(\R^d)$. Then

\vspace{2mm}
\noindent {\rm (i)} ${\rm SBB}=\Vc=\bar\Vc=\Vc_{\rm red}\in[0,\infty)$ at $(\mu_0,\mu_T)$;

\vspace{2mm}
\noindent {\rm (ii)} Under the additional condition $\beta T>1$:
\begin{itemize}

\vspace{-2mm}
\item[{\rm (a)}] The reduced dual problem $\Vc_{\rm red}$ has a solution $\hat\phi\in \Cc^{\rm conv}_{w}$, which induces a solution $\hat\psi:=\mathbf{T}_\beta^+[\hat\phi]\in\bar\Cc^{\rm conc}$ of the dual problem $\bar\Vc$; 

\vspace{-2mm}
\item[{\rm (b)}] The primal problem {\rm SBB} has a solution $\hat\P\in\Pc(\mu_0,\mu_T)$;

\vspace{-2mm}
\item[{\rm (c)}] For all $t<T$, the map
$\msY_t:={\rm id}-\frac1\beta\nabla \mathbf{T}_\beta^+[u^{\hat\phi}_{T-t}]$
is a well-defined one-to-one map, and we denote its inverse by
$\msX_t:=\msY_t^{-1}$. At terminal time $t=T$, $\msY_T$ is understood as
the corresponding set-valued argmin map. Moreover, there exists a probability
measure $\hat\Q$ under which $Y$ is a Schr\"odinger bridge diffusion,

\vspace{-3mm}
\begin{equation}
Y_t = Y_0+\int_0^t \nabla u^{\hat\phi}_{T-s}(Y_s)\,\d s
+W_t^{\hat\Q},
\qquad 0\le t<T,
\end{equation}
\vspace{-3mm}

and the process $X_t:=\msX_t(Y_t)$, $0\le t<T$, admits a continuous
extension to $[0,T]$ satisfying $\hat\P=\Law_{\hat\Q}(X)$.
Equivalently, if $\mathbf R$ denotes the Brownian law with initial distribution
$m_0:=\msY_0\#\mu_0$, then:

\vspace{-3mm}
\begin{equation}
\frac{d\hat\Q}{d\mathbf R}\Big|_{\Fc_t}
= e^{u^{\hat\phi}_{T-t}(Y_t)-u^{\hat\phi}_T(Y_0)}, \qquad 0\le t<T.
\end{equation}

\vspace{-3mm}
\item[{\rm (d)}] There exists a non-zero $\sigma$-finite measure $\nu_0$ such that, setting
$\nu_t=\Nc_t\!*\!\nu_0$ and $m_T:=e^{\hat\phi}\nu_T$, the dual optimiser
$\hat\phi$ is characterized by the SBB system

\vspace{-3mm}
\begin{equation}
\frac{\d\msY_0\#\mu_0}{\d\nu_0} = e^{u^{\hat\phi}_{T}},
\qquad
\mu_T=\msX_T\#m_T.
\end{equation}

\end{itemize}
\end{theorem}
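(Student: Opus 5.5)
The proof runs in four stages: the explicit HJB solution (Hopf–Cole plus Moreau), the duality chain in (i), dual attainment under $\beta T>1$, and the construction of the primal optimiser via the map $\msY$.

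\textbf{Explicit HJB solution.} For $\phi\in\Cc^{\rm conv}_w$ and $\psi=\mathbf T_\beta^+[\phi]$, we show that
\[
v(t,\cdot):=\mathbf T_\beta^+[u^\phi_{T-t}]
\]
is a classical solution of the HJB equation \eqref{HJB} on $Q_T$ with $D^2v<\beta I_d$. This is the content of Lemma \ref{lem:HC_Moreau}. The argument has two parts. First, Hopf–Cole: $u^\phi_s$ solves $\partial_s u=\frac12\Delta u+\frac12|\nabla u|^2$. Second, a Lax–Oleinik/Legendre computation: inf-convolving with $\frac\beta2|\cdot|^2$ turns this semilinear equation into $\partial_t v+H(Dv,D^2v)=0$. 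The identity used is $D^2v=\beta I_d-\beta^2(\beta I_d+D^2u)^{-1}$ at $x=\msX(y)$, which yields exactly the $\beta(\beta I_d-A)^{-1}$ term.

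A verification argument then gives $V^\psi_0=v(0,\cdot)$, hence $\mathfrak J(\phi)=\mu_T(\psi)-\mu_0(V^\psi_0)$. The upper bound (Itô plus pointwise maximisation in $H$) is a routine localisation using the quadratic growth bounds of Lemma \ref{lem:supX2}. The lower bound uses the feedback controls $\hat a=Dv$, $\hat b=\beta(\beta I_d-D^2v)^{-1}$.

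\textbf{Part (i).} By Remark \ref{rem:dual}, and since replacing $\psi$ by its $\beta$-concave envelope $\mathbf T^+_\beta\mathbf T^-_\beta[\psi]$ only increases $\mu_T(\psi)-\mu_0(V^\psi_0)$, we obtain $\Vc=\Vc_{\rm red}\le\bar\Vc$. Weak duality extended to $\bar\Cc^{\rm conc}$ gives $\bar\Vc\le{\rm SBB}$.

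For ${\rm SBB}\le\Vc$, I would use the convex-duality approach of \cite{TanTouzi2013}. Consider $\nu\mapsto{\rm SBB}(\mu_0,\nu)$ on $\Pc_2$ with the $\Cc_w$-weak topology. It is convex, since mixtures of plans and convexity of $c$ in the $(\d t\otimes\d\P)$-averaged characteristics preserve the constraint. Its Legendre conjugate is $\mu_0(V^\psi_0)$. It then suffices to prove lower semicontinuity, after which Fenchel–Moreau concludes.

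Lower semicontinuity is where coercivity in $b$ is missing. We have $c\ge\frac\beta2|b|^2-C$, so the quadratic term still controls $\E\int|\sigma|^2$. Tightness of plans with bounded cost then follows from Kolmogorov/Aldous criteria. Lower semicontinuity of the cost passes through the convex hull of the characteristics, that is, through relaxed controls in the sense of the quadratic variation density $\sigma\sigma^\intercal$. The difficulty is that $c$ is written in $b=\sigma$ rather than $bb^\intercal$. I would rewrite the cost as $\frac\beta2(\tr(bb^\intercal)-2\tr\sqrt{bb^\intercal}+d)$, which is convex in $bb^\intercal$ because $\tr\sqrt{\cdot}$ is concave. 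This makes the functional lsc in the semimartingale characteristics.

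\textbf{Part (ii)(a), dual attainment.} Take a maximising sequence $\phi_n\in\Cc^{\rm conv}_w$, normalised by an additive constant. The role of $\beta T>1$ is that it makes the Gaussian convolution $\Nc_T*e^\phi$ with a $\beta$-convex $\phi$ compatible with the inf-convolution. I expect it to yield uniform local bounds: uniform Lipschitz/semiconcavity estimates from $\beta$-convexity, together with upper bounds from the finiteness of $\mu_0(\mathbf T^+_\beta[u_T^{\phi_n}])$. With these, Arzelà–Ascoli extracts a limit $\hat\phi$, and upper semicontinuity of $\mathfrak J$ is obtained via Fatou on the $\mu_0$ term and monotone limits on the $\mu_T$ term. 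This step is the main obstacle. The delicate point is preventing mass from escaping, i.e. $\phi_n\to-\infty$ on parts of the support, which is why the relaxed class $\bar\Cc^{\rm conc}$ is needed. I would first try to obtain a two-sided quadratic bound $\phi_n\ge -C(1+|y|^2)$ on ${\rm supp}$ by comparing with the linear-coupling value from Remark \ref{rem:lin}.

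\textbf{Parts (b)–(d).} Given $\hat\phi$, we have $\msY_t=\mathrm{id}-\frac1\beta\nabla v_t$, which is the inverse of $\mathrm{id}+\frac1\beta\nabla u_{T-t}$. This is a gradient of a strictly convex function for $t<T$, so it is a homeomorphism. The proof then proceeds in four steps:
\begin{enumerate}
\item Define $\hat\Q$ as the $h$-transform of Brownian motion $\mathbf R$ started at $m_0=\msY_0\#\mu_0$, with density $e^{u_{T-t}(Y_t)-u_T(Y_0)}$; this is a martingale by the heat equation for $e^{u}$.
\item Apply Itô to $X=\msX_t(Y_t)$ and check that its characteristics are the HJB feedbacks $\hat a,\hat b$.
\item Derive the SBB system from the first-order conditions of $\mathfrak J$ at $\hat\phi$. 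Perturbing $\phi$ gives $\mu_T=\msX_T\#(e^{\hat\phi}\nu_T)$, where $\nu_0:=e^{-u_T}\,\msY_0\#\mu_0$ is a $\sigma$-finite measure. This shows $X_T\sim\mu_T$.
\item Conclude from the equality $J^0(\hat\P)=\mathfrak J(\hat\phi)$, given by verification, together with part (i).
\end{enumerate}
The continuous extension of $X$ at $T$ follows from the $L^2$ bounds via martingale convergence.
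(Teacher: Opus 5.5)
\textbf{Main gap: lower semicontinuity in (i).} Your overall architecture follows the paper: Fenchel--Moreau applied to $F(m)={\rm SBB}(\mu_0,m)$, Hopf--Cole plus Moreau envelope for the HJB equation, verification, then the $h$-transform and the first-order conditions of $\mathfrak J$. The step you yourself identify as the crux of (i), lower semicontinuity of $F$, fails as written.

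You claim that $c\ge\frac\beta2|b|^2-C$ gives tightness of plans with bounded cost via Kolmogorov/Aldous. It does not. Since $|b|^2=\tr(bb^\intercal)$, the cost is only \emph{linear} in the diffusion matrix $\sigma\sigma^\intercal$. This gives an $\L^1(\d t\otimes\d\P)$ bound on $\sigma\sigma^\intercal$ but no uniform integrability in time, and that is exactly the missing coercivity: Tan--Touzi require superlinear growth in $\sigma\sigma^\intercal$.

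A concrete counterexample: take $\alpha\equiv0$, $\sigma_t=\epsilon_n^{-1/2}I_d$ on $[T/2,T/2+\epsilon_n]$, and $\sigma_t=I_d$ elsewhere. The cost stays bounded (it tends to $\frac{\beta d}{2}$). Yet $X$ acquires an $\Nc(0,I_d)$ increment over a time interval of length $\epsilon_n\to0$. So the laws are tight neither in $C([0,T])$ nor in $D([0,T])$ with $J_1$, since a $J_1$-limit of continuous paths is continuous, and any limiting object would carry a jump and leave $\Pc$.

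Rewriting $c$ as a convex function of $bb^\intercal$ handles only the Jensen part of lower semicontinuity, not the compactness part. So the lower semicontinuity of $F$, which Fenchel--Moreau needs, is not proved.

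\textbf{How the paper avoids compactness.} It proves continuity of $F$ directly, through the estimate $F(m)\le(1+C_0\kappa)F(m')+d(1+\beta)\kappa$ for $\kappa=W_2(m,m')\le T/2$. The construction has two pieces:
\begin{itemize}
\item compress a near-optimal plan for $m'$ into $[0,T-\kappa]$ by the time change $\theta=T/(T-\kappa)$, which costs a factor $\theta$ plus $O(\kappa)$;
\item on $[T-\kappa,T]$, run a zero-volatility straight-line bridge along an optimal $W_2$-coupling of $m'$ and $m$, at cost $\frac\kappa2+\frac{\beta d}{2}\kappa$.
\end{itemize}
Since $\sigma(\Mc_2,\Cc_w)$-convergence implies $W_2$-convergence, this gives the required continuity.

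\textbf{Secondary issues in (ii).} First, the continuous extension of $X$ at $T$ cannot come from ``martingale convergence''. $X$ has a drift, the second-moment bounds hold only on $[0,\tau]$ for $\tau<T$, and in any case you must identify the limit as $\msX_T(Y_T)$ to conclude $X_T\sim\mu_T$. The paper writes $\beta X_t=\nabla F_{T-t}(Y_t)$ with $F_s:=u^{\hat\phi}_s+\frac\beta2|\cdot|^2$ convex, and notes $F_s\to\hat\phi+\frac\beta2|\cdot|^2$ locally uniformly. The limit is differentiable at $Y_T$ almost surely because $m_T\ll\Leb$, and stability of gradients of convex functions then gives $X_t\to\msX_T(Y_T)$.

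Second, in (ii)(a) the lower bound on $\phi_n$ is not the real issue: it follows from $\beta$-convexity and $\phi_n(0)=0$ once $\phi_n$ is bounded above on $B_1$. The key estimate is a uniform upper bound on $U^{\phi_n}_\eta=\log\E[e^{\phi_n(W_{\eta T})}]$. This is where $\beta T>1$ enters, through finiteness of $\mathbf T^+_\beta[-|\cdot|^2/(2(1-\eta)T)]$ for $\eta<1-\frac1{\beta T}$.

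Third, the $\mu_T$-term requires local uniform convergence of $\mathbf T^+_\beta[\phi_{n_k}]$. The paper obtains this through the heat-regularised maps $u^{\phi}_s$ as $s\downarrow0$, not through monotone limits, which a maximising sequence does not provide.
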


We report the proof of the duality (i) in Sections \ref{sec:duality} and \ref{sec:dualreduced}, and the remaining part (ii) in Sections  \ref{sec:dualattainment} and \ref{sec:primal}. To better understand the last characterization of our solution, recall that the Schr\"odinger bridge corresponds to the formal limit $\beta\to\infty$, where $\msY$ formally reduces to $\msY_t={\rm id}$ for all $t\in[0,T]$, while the Bass martingale corresponds to the formal limit $\beta\to0$, where $\hat\phi=0$. We observe here that the characterization (ii-c) in the last statement combines features of the Schr\"odinger bridge and the Bass martingale:
\begin{itemize}
\item Similar to the structure of the Schr\"odinger bridge, the flow of probability measures $(\nu_t)_{t\le T}$ satisfies the forward Fokker-Planck equation, while the map $(t,y)$ $\mapsto$ 
$e^{u_{T-t}^{\hat\phi}(y)}$ is a solution of the backward heat equation;
\item The map $\msY$ is the gradient of the convex map $\frac1\beta(\mathbf{T}_\beta^+[u^{\hat\phi}_{T-t}]-\frac\beta{2}|.|^2)$ is our substitute for the Brenier map in the Bass martingale component of our characterization. Its inverse $\msX$ is expressed as $\msX_t$ $=$ ${\rm id} + \frac1\beta\nabla u_{T-t}^{\hat\phi}$. 
\end{itemize}
We represent the SBB system in the standard graphical form adopted in the Schr\"odinger bridge literature as:
\begin{equation}
\begin{array}{ccccccl}
\mu_T
  & \!\!\!\!-\!\!\!-\!\!\!-\!\!\!\longrightarrow
  &\!\!\!\!m_T:=\msY_T \# \mu_T
  & \!\!\!\!-\!\!\!-\!\!\!-\!\!\!\longrightarrow
  &  \!\!\!\!{ \frac{dm_T}{d\nu_T} = e^{\hat\phi}}
  &  \!\!\!\!-\!\!\!-\!\!\!-\!\!\!\longrightarrow
  & \nu_T 
  \\
\Big\uparrow & & & & & &\Big\uparrow 
\\
\Big| & &\hspace{-25mm}\mbox{\colorbox{gray}{\color{white} ~~~~Bass~~~~}}& 
                         && \hspace{-15mm}\mbox{\colorbox{gray}{\color{white} Schr\"odinger}}
                         & \hspace{.6mm}\Big| 
\\
\Big| & & & & & & \hspace{.58mm}\Big|
  \\
\mu_0
  & \!\!\!\!-\!\!\!-\!\!\!-\!\!\!\longrightarrow
  & \!\!\!\!m_0:=\msY_0 \# \mu_0
  & \!\!\!\!-\!\!\!-\!\!\!-\!\!\!\longrightarrow
  &  \!\!\!\!\frac{dm_0}{d\nu_0} = e^{u^{\hat\phi}_T}
  & \!\!\!\!-\!\!\!-\!\!\!-\!\!\!\longrightarrow
  & \nu_0
\end{array}
\end{equation}
At terminal time, $\msY_T$ is understood as the argmin correspondence, and the rigorous terminal relation is
$\frac{dm_T}{d\nu_T}=e^{\hat\phi},
\ \mu_T=(\msX_T)_\#m_T$,
rather than a deterministic push-forward identity $m_T=\msY_T\#\mu_T$.

One can view the SBB solution as a stretched Schr\"odinger diffusion, i.e., a Bass transport of a Schr\"odinger bridge.

\section{Proof of the first duality results}  \label{sec:duality} 

In order to prove the first duality result SBB $=\Vc$ in Theorem \ref{thm:main} (i), we focus on the dependence of the value function SBB on its second argument. We thus fix $\mu_0\in\Pc_2(\R^d)$ and we analyse in this section the map $F:\Pc_2(\R^d)\longrightarrow\R$ defined by:
$$
F(m):={\rm SBB}(\mu_0,m),~m\in\Pc_2(\R^d).
$$
Let $\Mc_2$ denote the vector space of finite signed measures with finite second moment,
endowed with the topology $\sigma(\Mc_2,\Cc_w)$.

\begin{lemma}\label{lem:Fconvsci}
The map $F$ is convex and continuous on $\Pc_2(\R^d)$.
\end{lemma}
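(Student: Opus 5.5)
We prove convexity by mixing transport plans and using concavity of the matrix square root. We prove continuity on $\Pc_2(\R^d)$ for the Wasserstein-2 distance $W_2$ (the topology we take the statement to refer to) by combining an explicit upper perturbation with a lower semicontinuity argument. Finiteness of $F$ everywhere on $\Pc_2(\R^d)$ is Lemma \ref{lem:supX2}.

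\emph{Convexity.} Let $m_1,m_2\in\Pc_2(\R^d)$ and $\lambda\in(0,1)$, and pick $\P_i\in\Pc(\mu_0,m_i)$ with $J^0(\P_i)\le F(m_i)+\eps$. Set $\P:=\lambda\P_1+(1-\lambda)\P_2$, which lies in $\Pc(\mu_0,\lambda m_1+(1-\lambda)m_2)$.
\begin{itemize}
\item Under $\P$ the canonical process is again an It\^o process. Its drift is the $\F$-optional projection of $\alpha^{\P_i}$. Its diffusion matrix $B^\P:=\sigma^\P(\sigma^\P)^\intercal$ is the corresponding projection of the $B^{\P_i}$. Both are computed via the mixing density $\Lambda_t:=\lambda\,\d\P_1/\d\P|_{\Fc_t}$.
\item The key observation is to rewrite the cost in the variable $B=bb^\intercal$ with $b=B^{1/2}$:
\[
\tilde c(a,B)=\tfrac12|a|^2+\tfrac\beta2\big(\tr B-2\tr B^{1/2}+d\big).
\]
This is jointly convex, because $B\mapsto\tr B^{1/2}$ is concave on $\S^d_+$.
\item Conditional Jensen along the projection therefore gives $J^0(\P)\le\lambda J^0(\P_1)+(1-\lambda)J^0(\P_2)$.
\item We must check that the projected characteristics define a legitimate element of $\Pc$. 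This follows by a standard martingale-problem argument, enlarging the space if needed to carry the Brownian motion.
\end{itemize}
Letting $\eps\to0$ yields convexity of $F$.

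\emph{Upper semicontinuity.} Fix $m$, let $m'$ be close to $m$ in $W_2$, and set $\delta:=W_2(m,m')$. Take $\P\in\Pc(\mu_0,m)$ with $J^0(\P)\le F(m)+\eps$. Build $\P'$ as follows.
\begin{itemize}
\item On $[0,T-\delta]$, run the time-rescaled path $X_{tT/(T-\delta)}$. The drift gets multiplied by $T/(T-\delta)$ and $\sigma$ by $\sqrt{T/(T-\delta)}$, so the cost on this segment is $J^0(\P)+o(1)$ as $\delta\to0$, uniformly because $J^0(\P)<\infty$.
\item On $[T-\delta,T]$, glue the linear interpolation of Remark \ref{rem:lin} from $X_T\sim m$ to $Y$, where $(X_T,Y)$ is an optimal $W_2$-coupling of $(m,m')$, drawn conditionally on the path.
\item The added cost is $\frac{W_2^2(m,m')}{2\delta}+\frac{\beta d\delta}{2}=O(\delta)$.
\end{itemize}
Hence $F(m')\le F(m)+\eps+o(1)$, i.e.\ $\limsup_{m'\to m}F(m')\le F(m)$.

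\emph{Lower semicontinuity.} Take $m_n\to m$ and near-optimal $\P_n$ with $\sup_n J^0(\P_n)<\infty$. Lemma \ref{lem:supX2} together with BDG and Kolmogorov-type estimates gives tightness of $(\P_n)$ on $\Omega$, because $\E^{\P_n}\!\int_0^T(|\alpha|^2+|\sigma|^2)\,\d t$ is bounded. This is the step I expect to be the main obstacle, since the cost is only linear in $B$ and is not superlinear.
\begin{itemize}
\item I would lift $\P_n$ to laws of $(X,\int_0^\cdot\alpha\,\d s,\int_0^\cdot B\,\d s)$. The drift part is $L^2$-bounded, hence weakly compact.
\item For the $B$ part, the linear growth only gives relative compactness of $\int_0^\cdot B\,\d s$ as measures in time. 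The limit may therefore have a singular part, which would contradict absolute continuity of the limiting quadratic variation.
\item To exclude this, use the bound $\E\int_0^T|\sigma|^2\,\d t<\infty$, i.e.\ $\E\int_0^T\tr B\,\d t$ bounded. This controls the total mass of $B$ but not its concentration. So I would instead work with $b=\sigma$ itself, which is $L^2$-bounded and hence weakly compact, and relax to the convex envelope in the Tan–Touzi manner.
\item Identify the limit as an element of $\Pc(\mu_0,m)$, then conclude with lower semicontinuity of the convex integrand $\tilde c$, via Mazur / Fatou.
\end{itemize}
Alternatively, since $F$ is convex and finite on all of $\Pc_2$, one could try to bypass this step by showing that $F$ is locally bounded above in $W_2$, which the construction above provides, and invoking the automatic continuity of convex functions that are locally bounded above. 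The difficulty there is that $\Pc_2$ is not an open subset of a normed space, so this shortcut needs care and I would use it only as a fallback.
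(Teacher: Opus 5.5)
Your convexity argument (mixing plans, projecting the characteristics, convexity of $c$ in $(a,bb^\intercal)$) and your perturbation construction (time change by $\theta=T/(T-\kappa)$ on $[0,T-\kappa]$, then a linear bridge along an optimal $W_2$-coupling on $[T-\kappa,T]$) are what the paper does. Your justification of joint convexity through the concavity of $B\mapsto\tr B^{1/2}$ is more explicit than the paper's.

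\textbf{The lower semicontinuity step has a genuine gap.} The compactness route you propose fails at its first claim. A uniform bound on $\E^{\P_n}\!\int_0^T(|\alpha|^2+|\sigma|^2)\,\d t$ does \emph{not} give tightness in $C([0,T],\R^d)$. Kolmogorov's criterion needs moment bounds on increments that an $L^2$-in-time bound on $\sigma$ does not provide. Concretely, take $d=1$, $\alpha=0$, $\sigma_n=\sqrt n$ on $[t_0,t_0+\frac1n]$ and $\sigma_n=1$ elsewhere. Then the cost $\frac\beta2\int_0^T|\sigma_n-1|^2\,\d t$ stays bounded. Yet $X_n$ converges in finite-dimensional distributions to a process with an independent $N(0,1)$ jump at $t_0$, which has no continuous version. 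In the same example $\sigma_n\rightharpoonup1$ weakly in $L^2$, so weak limits of $\sigma$ cannot identify the limiting quadratic variation. This is exactly the lack of coercivity in the diffusion variable that rules out Tan--Touzi type compactness, and a convex relaxation does not repair it. Your fallback via automatic continuity of convex functions also has no footing on $(\Pc_2(\R^d),W_2)$.

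\textbf{The missing observation is that your upper-bound construction already yields the lower bound,} because the estimate it produces is quantitative and symmetric in the two measures. Do not write the time-changed cost as $J^0(\P)+o(1)$. Keep the constants: it is at most $\theta J^0(\P)+\frac{T\beta d(\sqrt\theta-1)}{2(\sqrt\theta+1)}$, and the bridge adds $\frac\kappa2+\frac{\beta d\kappa}2$. Hence, for \emph{every} pair $m,m'\in\Pc_2(\R^d)$ with $\kappa:=W_2(m,m')\le T/2$,
\[
F(m)\le(1+C\kappa)F(m')+C\kappa ,
\]
with $C$ depending only on $(T,\beta,d)$, not on $m,m'$. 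Taking $m'=m$ and $m=m_n$ gives $\limsup_nF(m_n)\le F(m)$. Swapping roles means starting from a near-optimal plan for $m_n$ and perturbing it to reach $m$. This gives $F(m)\le\liminf_n\big[(1+C\kappa_n)F(m_n)+C\kappa_n\big]=\liminf_nF(m_n)$, since $F\ge0$ and $\kappa_n\to0$. This is the paper's proof of continuity. It requires no compactness of transport plans, which matters because none is available here.
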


\proof
{\bf 1.} We first show that $F$ is convex on $\Pc_2(\R^d)$. Let $m_0,m_1\in\Pc_2(\R^d)$ and $\lambda\in(0,1)$. Fix $\eta>0$ and choose
$\P_i\in\Pc(\mu_0,m_i)$ with $\E^{\P_i}\big[\int_0^T c(\gamma_t^{\P_i})\d t \big]
\le F(m_i)+\eta$, $i=0,1$.

Consider the enlarged space $\tilde\Omega:=\Omega\times\{0,1\}$ with canonical process $(\tilde X,U)$ and canonical filtration $\tilde\F=\F^{\tilde X}\vee\F^U$, $\F^{\tilde X}=\{\Fc^{\tilde X}_t=\sigma(\widetilde X_s,s\le t)\}_{t\le T}$ and $\F^{U}=\{\Fc^{U}_t=\sigma(U)\}_{t\le T}$. Let $\tilde\P$ be the probability measure under which $U$ is a Bernoulli$(\lambda)$ variable independent of $\tilde X_0$, $\widetilde X$ has conditional law $\P_i$ given $\{U=i\}$, $i=0,1$, and thus its characteristics $\tilde\gamma$ are defined by $\tilde\gamma^i:=(\tilde\alpha^i,\tilde\sigma^i)$ conditionally on $\{U=i\}$. 

By the Markovian projection argument of Gy\"ongi \cite{gyongy1986mimicking}, we see that $\P:=\tilde\P\circ \widetilde{X}^{-1}\in\Pc(\mu_0,m_\lambda)$, $m_\lambda:=(1-\lambda)m_0+\lambda m_1$, with characteristics $\gamma_t^\P=(\alpha_t^\P,\sigma_t^\P)$ defined by $(\alpha_t^\P,\sigma_t^\P(\sigma_t^\P)^\intercal)=\E^{\widetilde\P}[(\widetilde\alpha_t,\widetilde\sigma_t(\widetilde\sigma_t)^\intercal)| \Fc^{\tilde X}_t]$, and by the convexity of the cost function $c(a,b)$ in $(a,bb^\intercal)$, we see that
\begin{align}
F(m_\lambda)
\le
\E^{\P}\!\int_0^T \!\!c(\gamma_t^{\P})\ dt
& \le 
(1-\lambda) \E^{\P_0}\!\int_0^T \!\! c(\gamma_t^{\P_0})\ \d t
+\lambda\E^{\P_1}\!\int_0^T \!\! c(\gamma_t^{\P_1})\ \d t
\\
&\le 
(1-\lambda) F(m_0)
+\lambda F(m_1)+2\eta.
\end{align}
The required convexity of $F$ now follows from the arbitrariness of $\eta>0$.

\noindent {\bf 2.} We next show the continuity of $F$. We shall prove in Step 3 below that
\begin{equation}\label{ineq:step3}
F(m)
\le
(1+C_0\kappa)F(m')+d(1+\beta)\kappa,
~\mbox{whenever}~
\kappa:=W_2(m,m')\le\frac{T}2.
\end{equation}
Let $m_n\in\Pc_2(\R^d)$ with $m_n\to m$ in $\sigma(\Mc_2,\Cc_w)$.
Then $m_n\rightharpoonup m$ and, since $|.|^2\in\Cc_w$,
$\int|.|^2\d m_n\to\int|.|^2\d m$, which implies $W_2(m_n,m)\to0$. Then, as $\kappa_n:=W_2(m_n,m)\le\frac{T}2$ for large $n$, it follows from \eqref{ineq:step3} that:
$$
F(m)
\le
\liminf_{n\to\infty}(1+C_0\kappa_n)F(m_n)+d(1+\beta)\kappa_n
=\liminf_{n\to\infty}F(m_n).
$$
Applying \eqref{ineq:step3} again with $m$ and $m_n$ exchanged, we obtain the reverse inequality.
Hence
$$
\lim_{n\to\infty}F(m_n)=F(m),
$$
which proves the continuity of $F$.

\noindent {\bf 3.} It remains to prove \eqref{ineq:step3}. Fix $\eta>0$ and pick $\P\in\Pc(\mu_0,m')$ such that
\begin{equation}\label{eq:nearmin}
\E^{\P}\!\int_0^T c(\gamma_t^\P)\d t \le F(m')+\eta.
\end{equation}
\smallskip
\noindent\emph{3-a. Time change on $[0,T-\kappa]$.} Set $\theta:=\frac{T}{T-\kappa}$ and define on the same space  $\widetilde X_u:=X_{\theta u}$ for $u\in[0,T-\kappa]$. Let $\widetilde \P$ be the law of $\widetilde X$ on $C([0,T-\kappa];\R^d)$.
Then $\widetilde X_0\sim\mu_0$ and $\widetilde X_{T-\kappa}=X_T\sim m'$.
A direct change-of-variables computation in the martingale problem shows that
$\widetilde X$ has characteristics $\widetilde\alpha_u=\theta\alpha_{\theta u}^\P$ and $\widetilde\sigma_u=\sqrt\theta\, \sigma_{\theta u}^\P$, and therefore:
$$
\E^{\widetilde \P}\!\int_0^{T-\kappa}
                             c(\widetilde\alpha_t,\widetilde\sigma_t)\ \d t
= 
\frac12\theta \ 
\E^{\P}\!\int_0^T\big(|\alpha_t^\P|^2
                                 +\frac{\beta}{\theta^2} |\sqrt\theta(\sigma_t^\P-I_d)+(\sqrt{\theta}-1)I_d|^2
                          \big)\ \d t.
$$
Note that, for $b\in\R^{d\times d}$, $ |\sqrt{\theta}b+(\sqrt{\theta}-1)I_d|^2 = \theta |b|^2 +2\sqrt{\theta}(\sqrt{\theta}-1)b:I_d +(\sqrt{\theta}-1)^2 d$. By Young's inequality, $2\sqrt{\theta}(\sqrt{\theta}-1)b:I_d \le \theta(\theta-1)|b|^2 + \frac{(\sqrt{\theta}-1)^2}{\theta-1}d$. Hence $|\sqrt{\theta}b+(\sqrt{\theta}-1)I_d|^2 \le \theta^2|b|^2+\frac{\theta(\sqrt{\theta}-1)}{\sqrt{\theta}+1}d$. We deduce from \eqref{eq:nearmin} that
\begin{eqnarray}
\E^{\widetilde \P}\!\int_0^{T-\kappa}
                             c(\widetilde\alpha_t,\widetilde\sigma_t)\ \d t
&\le&
\theta \ 
\E^{\P}\!\int_0^Tc(\alpha_t^\P,\sigma^\P_t)\  \d t
+\frac{T\beta d(\sqrt\theta-1)}{2(\sqrt\theta+1)}
\nonumber\\
&\le&
\theta (F(m')+\eta)
+\frac{T\beta d(\sqrt\theta-1)}{2(\sqrt\theta+1)}.
\label{Flsc1}
\end{eqnarray}

\noindent\emph{3-b. Terminal correction on $[T-\kappa,T]$.}
Let $\pi$ be an optimal $W_2$-coupling between $m'$ and $m$, disintegrated as
$\pi(dy| x)m'(dx)$. Enlarge the space and sample $Y\sim \pi(\cdot\mid \widetilde X_{T-\kappa})$.
Then $\text{\rm Law}(Y)=m$ and $\E|Y-\widetilde X_{T-\kappa}|^2=W_2(m',m)^2=\kappa^2$.
Define
\begin{equation}
\widehat X_t :=
\1_{\{t\in[0,T-\kappa]\}} \widetilde X_t
+\1_{\{t\in[T-\kappa,T]\}}
   \Big\{\widetilde X_{T-\kappa}+\dfrac{t-(T-\kappa)}{\kappa}\bigl(Y-\widetilde X_{T-\kappa}\bigr)\Big\}.
\end{equation}
On $[T-\kappa,T]$, $\widehat X$ has drift $\widehat\alpha_t=(Y-\widetilde X_{T-\kappa})/\kappa$
and diffusion $\widehat\sigma_t\equiv0$, hence
\begin{equation}\label{eq:bridge}
\E\!\int_{T-\kappa}^T c(\widehat\alpha_t,\widehat\sigma_t)\ dt
=\frac{W_2(m',m)^2}{2\kappa} + \frac{\beta}{2} d\ \kappa
=\frac{\kappa}{2} + \frac{\beta}{2} d\ \kappa.
\end{equation}
Let $\widehat \P$ be the law of $\widehat X$ on the canonical space.
By a Markovian projection argument as in Step 1 of the current proof, we see that $\widehat \P\in\Pc(\mu_0,m)$ and
\begin{align}\label{eq:jensenbridge}
F(m)
\le
\E^{\widehat \P}\!\int_0^T c(\alpha_t^{\widehat \P},\sigma_t^{\widehat \P})\ dt
&\le \E\!\int_0^T c(\widehat\alpha_t,\widehat\sigma_t)\ dt
\\
&\le
\theta (F(m')+\eta)
+\frac{T\beta d(\sqrt\theta-1)}{2(\sqrt\theta+1)}
+\frac{\kappa}{2} + \frac{\beta}{2} d\ \kappa,
\end{align}
where the last inequality follows from \eqref{Flsc1} and \eqref{eq:bridge}. Substituting $\theta:=\frac{T}{T-\kappa}$ and using $\kappa\le T/2$, we have $\theta=\frac{T}{T-\kappa}\le 1+\frac{2\kappa}{T}, \  \frac{\sqrt\theta-1}{\sqrt\theta+1}\le C_T\kappa$  for some constant $C_T<\infty$ depending only on $T$. Letting $\eta\downarrow0$ and enlarging the constant if necessary gives \eqref{ineq:step3}.
\ep

\vspace{5mm}
We are now ready for the first duality result in Theorem \ref{thm:main} {\rm (i)}.

\begin{proposition}\label{prop:1stduality}
For $\mu_0,\mu_T\in\Pc_2(\R^d)$, we have ${\rm SBB}(\mu_0,\mu_T)=\Vc(\mu_0,\mu_T)$.
\end{proposition}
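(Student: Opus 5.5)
We argue by Fenchel--Moreau duality for the map $F$ of Lemma \ref{lem:Fconvsci}. Extend $F$ to $\Mc_2$ by $F(m):=+\infty$ for $m\notin\Pc_2(\R^d)$. The pairing between $\Mc_2$ and $\Cc_w$ is $\langle m,\psi\rangle=m(\psi)$.

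\textbf{Step 1: closedness.} We first show that the extended $F$ is convex and lower semicontinuous on $\Mc_2$ for $\sigma(\Mc_2,\Cc_w)$. Convexity holds because $\Pc_2(\R^d)$ is convex and Lemma \ref{lem:Fconvsci} gives convexity on it. For lower semicontinuity, note that $\Pc_2(\R^d)$ is $\sigma(\Mc_2,\Cc_w)$-closed in $\Mc_2$: positivity, unit mass and finiteness of the second moment are all tested against elements of $\Cc_w$, including the constant $1$, nonnegative functions and $|\cdot|^2$. On $\Pc_2$, the continuity in Lemma \ref{lem:Fconvsci} is stated along sequences. To pass to the weak topology, we use that $\Cc_w$-convergence of probability measures implies $W_2$-convergence, as in Step 2 of that proof. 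We then argue with the $W_2$-continuity estimate \eqref{ineq:step3}. Since $\sigma(\Mc_2,\Cc_w)$ is not metrizable, sublevel sets need separate care. I would prove that $\{F\le c\}$ is closed by showing it is convex and closed for the metric $W_2$. A convex set is weakly closed as soon as it is closed in a locally convex topology compatible with the same dual. I would therefore try to argue with the metric $\|m\|:=\int w\,\d|m|$ and Mazur's lemma; this step needs checking.

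\textbf{Step 2: biconjugation.} Since $F$ is proper, with $F(\mu_T)<\infty$ by Lemma \ref{lem:supX2}, Fenchel--Moreau gives
$$F(\mu_T)=F^{**}(\mu_T)=\sup_{\psi\in\Cc_w}\big\{\mu_T(\psi)-F^*(\psi)\big\},\qquad F^*(\psi)=\sup_{m\in\Pc_2}\big\{m(\psi)-F(m)\big\}.$$

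\textbf{Step 3: computing the conjugate.} Unwinding the definition of $F$,
$$F^*(\psi)=\sup_{\P\in\Pc(\mu_0),\,J^0(\P)<\infty}\E^\P\Big[\psi(X_T)-\int_0^Tc(\gamma^\P_t)\,\d t\Big]=\sup_{\P\in\Pc(\mu_0)}-J^\psi(\P).$$
The terminal law of any $\P$ with $J^0(\P)<\infty$ lies in $\Pc_2$ by Lemma \ref{lem:supX2}. The inequality $\sup_\P(-J^\psi(\P))\le\mu_0(V^\psi_0)$ follows from disintegration with respect to $X_0$, as in the derivation of \eqref{weakduality}. The reverse inequality needs a measurable selection of $\eps$-optimal controls $x\mapsto\P^x\in\Pc(\delta_x)$, glued into $\int\P^x\mu_0(\d x)\in\Pc(\mu_0)$. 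This is standard dynamic programming measurability: the graph is analytic and one takes a universally measurable selection. Quadratic growth of $\psi$ and $\mu_0\in\Pc_2$ keep everything integrable, possibly with value $+\infty$. Hence $F^*(\psi)=\mu_0(V^\psi_0)$, so $F(\mu_T)=\Vc(\mu_0,\mu_T)$. Combined with \eqref{weakduality}, this closes the argument.

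\textbf{Main obstacle.} I expect the main obstacle to be Step 1, upgrading the sequential $W_2$-continuity to lower semicontinuity for $\sigma(\Mc_2,\Cc_w)$. The fallback is to establish the closedness of the sublevel sets directly from \eqref{ineq:step3} and the convexity of $F$.
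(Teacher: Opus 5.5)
Your overall route is the paper's. You extend $F$ by $+\infty$ off $\Pc_2(\R^d)$ and apply Fenchel--Moreau in the pair $(\Mc_2,\Cc_w)$. You then identify $\bar F^*(\psi)=\sup_{\P\in\Pc(\mu_0)}-J^\psi(\P)$ with $\mu_0(V^\psi_0)$, by disintegration in one direction and measurable selection (Tan--Touzi, Thm.~A.1) in the other. The paper simply asserts the $\sigma(\Mc_2,\Cc_w)$-lower semicontinuity of $\bar F$ after Lemma~\ref{lem:Fconvsci}. You are right to single out Step~1, but the argument you sketch there fails.

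The norm $\|m\|=\int w\,\d|m|$ is not compatible with the pair $(\Mc_2,\Cc_w)$. The map $m\mapsto wm$ identifies $(\Mc_2,\|\cdot\|)$ with finite signed measures under total variation, whose dual contains $wg$ for every bounded Borel $g$. So Mazur only yields closedness for that strictly finer weak topology. Norm-closed convex sets need not be $\sigma(\Mc_2,\Cc_w)$-closed: $\{m\in\Pc_2(\R^d):m\ll\Leb\}$ is convex and norm-closed, yet $\delta_0$ is the $\sigma(\Mc_2,\Cc_w)$-limit of the Gaussians $\Nc_\varepsilon(x)\,\d x$. The Mackey--Arens criterion you invoke does not apply to $W_2$ either, since $W_2$ is not a locally convex topology on $\Mc_2$.

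What actually makes $W_2$-closedness of $\{F\le c\}$ sufficient, with no convexity needed, is the following fact. Although $\sigma(\Mc_2,\Cc_w)$ is not metrizable on $\Mc_2$, its trace on $\Pc_2(\R^d)$ is, and it coincides with the $W_2$ topology.

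\begin{itemize}
\item On $\Pc_2(\R^d)$ this trace is generated by the maps $m\mapsto m(g)$, $g\in C_b$, together with the single map $m\mapsto m(|\cdot|^2)$.
\item Indeed, along a net converging in the latter sense, $m_\alpha\big((|\cdot|^2-R^2)^+\big)\to m\big((|\cdot|^2-R^2)^+\big)$, because $|\cdot|^2\wedge R^2\in C_b$. This is the uniform tail control needed to pass to any $\psi\in\Cc_w$ through a cutoff $\psi\chi_R$.
\item The weak topology of $\Pc(\R^d)$ is metrizable (bounded-Lipschitz or L\'evy--Prokhorov metric), and adding one real functional keeps it metrizable.
\end{itemize}

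Hence the sequential continuity proved in Lemma~\ref{lem:Fconvsci} is genuine continuity for the trace topology, so $\{F\le c\}$ is relatively closed in $\Pc_2(\R^d)$. Since $\Pc_2(\R^d)$ is $\sigma(\Mc_2,\Cc_w)$-closed in $\Mc_2$, as you noted, $\{\bar F\le c\}$ is closed.

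A smaller point concerns Step~3, where ``quadratic growth keeps everything integrable'' is too quick. For $\psi=A|\cdot|^2$ with $A>\beta/2$, inflating the volatility gives $V^\psi_0\equiv+\infty$, so $\varepsilon$-optimal selections must be reformulated. Also, the glued measure $\int\P^x\mu_0(\d x)$ must have terminal law in $\Pc_2(\R^d)$ to belong to $\Pc(\mu_0)$, which is not automatic. The paper's preliminary truncation $\psi\wedge k$, combined with monotone convergence, handles both issues. For $\psi$ bounded above, $\varepsilon$-optimality and the Brownian lower bound on $V^\psi_0$ give $\E^{\P^x}\int_0^Tc(\gamma^{\P^x}_t)\,\d t\le C(1+|x|^2)$. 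The required second moment of the glued terminal law then follows by BDG.
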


\proof  {\bf 1.} Define $\bar F:\Mc_2\to \R\cup\{+\infty\}$ by
$$
\bar F(m):=
\left\{
\begin{array}{ll}
F(m), & \mbox{if } m\in\Pc_2(\R^d),\\
+\infty, & \mbox{if } m\in\Mc_2\setminus\Pc_2(\R^d).
\end{array}
\right.
$$
Since $F$ is convex on $\Pc_2(\R^d)$, the map $\bar F$ is convex on $\Mc_2$.
Moreover, $\bar F$ is $\sigma(\Mc_2,\Cc_w)-$lower semicontinuous on $\Mc_2$. Then,
$F(\mu_T)=\bar F(\mu_T)=\bar F^{**}(\mu_T)$ by the Fenchel-Moreau theorem, where:
$$
\bar F^{**}(m)=\sup_{\psi\in\Cc_w}m(\psi)-\bar F^*(\psi)
~\mbox{and}~
\bar F^{*}(\psi)=\sup_{m\in\Mc_2}m(\psi)-\bar F(m)
=\sup_{m\in\Pc_2}m(\psi)-F(m)
$$
Hence $
F(\mu_T)=\bar F(\mu_T)=\bar F^{**}(\mu_T).
$

\medskip
\noindent {\bf 2.} We now complete the required dual representation by identifying the map $\bar F^{**}$ to the value $\Vc(\mu_0,\mu_T)$ introduced in \eqref{weakduality}. First,
\begin{align}
\bar F^*(\psi)
&= \sup_{m\in\Pc_2}m(\psi)-F(m)
= \sup_{m\in\Pc_2}m(\psi)-\inf_{\P\in\Pc(\mu_0,m)}J^0(\P)
= \sup_{\P\in\Pc(\mu_0)}-J^\psi(\P),
\end{align}
and by Step 1,
$$
F(\mu_T)
=\bar F^{**}(\mu_T)
=
\sup_{\psi\in\Cc_w}\mu_T(\psi)-\bar F^*(\psi)
=
\sup_{\psi\in\Cc_w}\mu_T(\psi)-\sup_{\P\in\Pc(\mu_0)}-J^\psi(\P).
$$
We now prove that $\bar F^{**}(\mu_T)=\Vc(\mu_0,\mu_T)$ as defined in \eqref{weakduality}. This is similar to Lemma 3.5 in \cite{TanTouzi2013} which was established under their coercivity condition on the cost function.

{\it 2-a.} We first show that it suffices to prove the required result for potential maps $\psi$ with $|\psi^+|_{\infty}<\infty$. Indeed, for $\psi\in\Cc_w$, define $\psi_k:=\psi\wedge k$, and notice that as $\psi_k\uparrow\psi$, $c\ge0$, and $\psi^-(X_T)\in \L^1(\P)$, it follows from the monotone convergence theorem that
$J^{\psi_k}(\P)\downarrow J^\psi(\P)$. By the standard approximate optimiser, we also obtain the monotone convergence $\inf_{\P\in\Pc(\mu_0)}J^{\psi_k}(\P)
\downarrow\inf_{\P\in\Pc(\mu_0)}J^{\psi}(\P)$.

\noindent {\it 2-b.} The inequality $\inf_{\P\in\Pc(\mu_0)}J^\psi(\P)\ge-\mu_0(V_0^\psi)$ was already established right before introducing the dual problem $\Vc(\mu_0,\mu_T)$ in \eqref{weakduality}. We now prove the reverse inequality by means of a measurable selection argument.
Since $|\psi^+|_\infty<\infty$ and $c\ge0$, we have
$-J^\psi(\P)
= \E^\P\Big[\psi(X_T)-\int_0^T c(\gamma_t^\P)\d t\Big]
\le |\psi^+|_\infty, \  \P\in\Pc(\mu_0)$, and thus $V^\psi_0<\infty$. For fixed $\eta>0$, it follows
from the measurable selection result, see e.g. \cite[Appendix, Thm.~A.1]{TanTouzi2013}, that there exists a $\mu_0$-measurable map $x\mapsto\P^{x,\eta}\in\Pc(\delta_x)$ such that $
J^{\psi}(\P^{x,\eta}) \le -V^\psi_0(x)+\eta
~\text{for }\mu_0\text{-a.e. }x.$ Define $\P^\eta:=\int_{\R^d} \P^{x,\eta}(\cdot)\,\mu_0(dx)$ on $\Omega$. As $\P^\eta[\cdot\,|\,X_0=x]=\P^{x,\eta}$, $\mu_0$-a.s., we have $\P^\eta\in\Pc(\mu_0)$, and it follows from Fubini's theorem that
\begin{equation}
J^{\psi}(\P^\eta)
=\int_{\R^d} J^\psi(\P^{x,\eta})\,\mu_0(dx)
\le -\int_{\R^d} V^\psi_0(x)\,\mu_0(dx)+\eta.
\end{equation}
This shows that $\P^\eta$ is an $\eta-$optimiser, and we deduce the required result by the arbitrariness of $\eta>0$.
\ep

\vspace{5mm}

In preparation for the proof of the reduced dual formulation in the next subsection, we now show that we may restrict the dual maximization to upper bounded $\beta$--concave potentials $\psi$: 
\begin{equation}
\Cc_{w,\uparrow}^{\conc}:=\Cc^{\rm conc}\cap\Cc_{w,\uparrow}
~\mbox{with}~
\Cc_{w,\uparrow}:=\{\psi\in \Cc_w:\ |\psi^+|_\infty<\infty\}.
\end{equation}
We also set
$\Cc_{w,\uparrow}^{\conv}
:= \big\{\mathbf T_\beta^-[\psi]:\psi\in\Cc_{w,\uparrow}^{\conc}\big\}$.

For $t<T$ and $x\in\R^d$, we use the shifted notation
$\Pc_t(\delta_x)$ for the admissible laws on $[t,T]$ starting from $x$ at time $t$, and
\begin{equation}
V_t^\psi(x)
:= \sup_{\P\in\Pc_t(\delta_x)}
\E^\P\Big[ \psi(X_T)-\int_t^T c(\gamma_r^\P)\,\d r \Big].
\end{equation}

\begin{lemma}\label{lem:reduce0}
We have
\begin{equation}
\Vc(\mu_0,\mu_T)
= \underline{\Vc}(\mu_0,\mu_T)
:= \sup_{\psi\in\Cc_{w,\uparrow}^{\conc}}
\{\mu_T(\psi)-\mu_0(V_0^\psi)\}.
\end{equation}
Moreover,
\begin{equation}\label{eq:dual-enlarged-conc}
\mu_T(\psi)-\mu_0(V_0^\psi)
\le \Vc(\mu_0,\mu_T), ~~\mbox{for all}~~ \psi\in\Cc^{\conc},
\end{equation}
where the objective is understood in the extended sense.
\end{lemma}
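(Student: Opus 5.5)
Two inequalities will be proved. The easy direction $\underline{\Vc}\le\Vc$ is immediate because $\Cc^{\conc}_{w,\uparrow}\subset\Cc_w$. For the reverse direction, Step 2-a of the proof of Proposition \ref{prop:1stduality} already lets me restrict to $\psi\in\Cc_{w,\uparrow}$, that is, $\psi\in\Cc_w$ with $\psi^+$ bounded. Given such a $\psi$, I will replace it by its $\beta$-concave envelope $\tilde\psi:=\mathbf T_\beta^+\circ\mathbf T_\beta^-[\psi]$, the smallest $\beta$-concave majorant of $\psi$. I then aim to show three things:
\begin{itemize}
\item[(a)] $\tilde\psi\in\Cc^{\conc}_{w,\uparrow}$;
\item[(b)] $\tilde\psi\ge\psi$, so that $\mu_T(\tilde\psi)\ge\mu_T(\psi)$;
\item[(c)] $V_0^{\tilde\psi}=V_0^\psi$.
\end{itemize}
These together give $\mu_T(\tilde\psi)-\mu_0(V_0^{\tilde\psi})\ge\mu_T(\psi)-\mu_0(V_0^\psi)$, and hence $\Vc\le\underline{\Vc}$.

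For (a), $\tilde\psi\ge\psi$ gives a quadratic lower bound. For the upper side, $\psi\le M:=|\psi^+|_\infty$ implies $\tilde\psi\le M$, since the constant $M$ is a $\beta$-concave majorant. Continuity follows from local Lipschitz continuity of finite semiconcave functions. Property (b) holds by construction.

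The key step is (c), and I expect it to be the main obstacle. The inequality $V^{\tilde\psi}\ge V^\psi$ is trivial. For the reverse, I will use the fact that the volatility control is free: in a short final time window, the controller can make $X_T$ realize the concave-envelope value at essentially no additional cost. Concretely, at $x$ we have $\tilde\psi(x)=\sup\{\sum_i\lambda_i\psi(x_i)-\frac\beta2\sum_i\lambda_i|x_i-x|^2\}$ over convex combinations with $\sum_i\lambda_i x_i=x$. This is the standard representation of the $\beta$-concave envelope as the concave envelope of $\psi-\frac\beta2|\cdot|^2$, shifted back.

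On $[T-\delta,T]$ the controller runs a driftless martingale with volatility $b$, starting from $x$ and ending in the distribution $\sum_i\lambda_i\delta_{x_i}$. The cost of this is $\frac\beta2\int|b-I_d|^2\,\d t$. Because the cost function has the form $\frac\beta2|b|^2-\beta\,\tr(b)+\frac\beta2 d$, I expect to relate it to $\frac\beta2\E|X_T-x|^2$ up to terms of order $O(\delta)$. Here I would use $\E\int\tr(bb^\intercal)\,\d t=\E|X_T-x|^2$ together with $\E\int\tr(b)\,\d t\ge0$. This gives $V^\psi_{T-\delta}\gtrsim\tilde\psi-O(\delta)$ in the limit.

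To make this rigorous without perfect embeddings, I would instead argue at the level of the value function through the PDE. Since $H=+\infty$ when $D^2v\not<\beta I_d$, the viscosity solution $V_t^\psi$ is $\beta$-concave for every $t<T$, so $V_{t}^\psi\ge\tilde\psi-o(1)$ as $t\to T$. The dynamic programming principle then gives $V_0^\psi\ge V_0^{\tilde\psi}$. In the dynamic programming step I will use $V_0^\psi=\sup_{\P}\E[V_t^\psi(X_t)-\int_0^t c]$ and pass to the limit $t\uparrow T$ with Fatou's lemma, using the upper bound $M$.

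Finally, for \eqref{eq:dual-enlarged-conc}, take $\psi\in\Cc^{\conc}$ with the objective well defined, and truncate it as $\psi_k:=\psi\wedge k$, then pass to envelopes. Alternatively, one can approximate $\psi$ from below by $\beta$-concave functions in $\Cc_w$: for example $\psi_k:=\mathbf T^+_\beta$ of $\mathbf T^-_\beta[\psi]\vee(-k w)$, or $\psi\vee(-k w)$ followed by taking envelopes. Each $\psi_k$ gives a value at most $\Vc$ by the first part. I then pass to the limit by monotone convergence in $\mu_T(\psi_k)$ and in $V_0^{\psi_k}\downarrow V_0^\psi$; the latter uses the approximate-optimiser argument of Step 2-a in the proof of Proposition \ref{prop:1stduality}. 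The extended-sense convention handles the cases in which the objective equals $-\infty$.
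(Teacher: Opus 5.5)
Your proof of $\Vc=\underline{\Vc}$ follows the paper's argument: truncate to $\Cc_{w,\uparrow}$, replace $\psi$ by its $\beta$-concave envelope $\tilde\psi$, use that $V^\psi_t$ is $\beta$-concave because $H=+\infty$ off ${\rm dom}(H)$, then propagate by dynamic programming. Two small repairs are needed there.

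\begin{itemize}
\item The ``$-o(1)$'' should be made explicit. The control $(\alpha,\sigma)\equiv(0,0)$ gives $V_t^\psi\ge\psi-\frac{\beta d}{2}(T-t)$. By $\beta$-concavity of $V^\psi_t$, this yields $V_t^\psi\ge\tilde\psi-\frac{\beta d}{2}(T-t)$.
\item The upper bound $M$ is the wrong bound for Fatou. It only gives $\limsup_t\E^\P[V_t^\psi(X_t)]\le\E^\P[\limsup_t V_t^\psi(X_t)]$. You need $\liminf_{t\uparrow T}\E^\P[V_t^\psi(X_t)]\ge\E^\P[\tilde\psi(X_T)]$, which requires an integrable minorant. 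Use $V_t^\psi(z)\ge-C(1+|z|^2)-\frac{\beta d}{2}T$, which comes from $\psi\in\Cc_w$, together with $\E^\P[\sup_r|X_r|^2]<\infty$ from Lemma \ref{lem:supX2}.
\end{itemize}

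The genuine gap is in the second claim \eqref{eq:dual-enlarged-conc}.

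\begin{itemize}
\item An element of $\Cc^{\conc}$ need not lie in $\Cc_w$; for example $\psi(x)=-e^{|x|}$ is $\beta$-concave. Then neither $\psi\wedge k$ nor its envelope lies in $\Cc_w$, since the envelope is sandwiched between $\psi\wedge k$ and $\psi$. So the first part of the lemma says nothing about them.
\item No function of $\Cc_w$ lies below such a $\psi$, so approximating from below inside $\Cc_w$ is impossible.
\item Your concrete candidates $\psi\vee(-kw)$ and $\mathbf T_\beta^+\big[\mathbf T_\beta^-[\psi]\vee(-kw)\big]$ are approximations from \emph{above}. For them $V_0^{\psi_k}\ge V_0^\psi$, so you need $\lim_k V_0^{\psi_k}\le V_0^\psi$.
\item That inequality exchanges a decreasing limit with $\sup_\P$. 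It does not follow from monotonicity; it needs a compactness or verification argument for $\psi$ that you do not supply.
\item Step 2-a of Proposition \ref{prop:1stduality} does not provide it. That step treats increasing truncations $\psi\wedge k\uparrow\psi$, where the supremum commutes with the limit for free.
\end{itemize}

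The missing idea is that weak duality against ${\rm SBB}$ requires no growth condition on the payoff. Take $\psi_n:=\psi\wedge n$ and any $\P\in\Pc(\mu_0,\mu_T)$ with finite cost. Disintegrating along $X_0$ gives $V_0^{\psi_n}(x)\ge\E^{\P_x}\big[\psi_n(X_T)-\int_0^Tc(\gamma^{\P_x}_s)\,\d s\big]$ for $\mu_0$-a.e. $x$. Integrating gives $\mu_T(\psi_n)-\mu_0(V_0^{\psi_n})\le J^0(\P)$, hence the left side is at most ${\rm SBB}=\Vc$ by Proposition \ref{prop:1stduality}. Since $V_0^{\psi_n}\le V_0^\psi$, you get $\mu_T(\psi_n)-\mu_0(V_0^{\psi})\le\Vc$. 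Finally, $\mu_T(\psi_n)\uparrow\mu_T(\psi)$ in the extended sense because $\psi^+\in\L^1(\mu_T)$.

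To make $\mu_0(V_0^{\psi_n})$ well defined you also need a uniform lower bound. The straight-line deterministic control to a fixed point $y$ gives $V_0^{\psi_n}(x)\ge\psi_1(y)-\frac{|y-x|^2}{2T}-\frac{\beta d}{2}T$. So your first option, truncating with $\psi\wedge k$, is the right approximation; its bound by $\Vc$ has to come from this weak-duality argument, not from the first part of the lemma.
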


\proof
(a) We first restrict the dual maximization to $\Cc_{w,\uparrow}$.
For arbitrary $\psi\in\Cc_w$, we have $\psi^n:=\psi\wedge n\in\Cc_{w,\uparrow}$, $n\ge1$. As $\psi^n\uparrow\psi$ and $\psi^n\ge\psi^1\in\L^1(\mu_T)$, it follows by monotone convergence that $
\mu_T(\psi^n)\ \uparrow\ \mu_T(\psi).$

On the other hand, monotonicity in the terminal payoff gives $V_0^{\psi^n}\uparrow V_0^\psi$ pointwise. As $A:=|\frac\psi{w}|_\infty<\infty$, and the Wiener measure $\Q^{0,x}\in\Pc(\delta_x)$ started from $\delta_x$ has characteristics $\gamma^{\Q^{0,x}}=(0,I_d)$ inducing the cost $c(\gamma^{\Q^{0,x}})=0$, we have
\begin{equation}
V_0^{\psi^n}(x)\ge E^{\Q^{0,x}}[\psi^n(X_T)]
\ge -A(1+\E^{\Q^{0,x}}|X_T|^2)
= -A(1+|x|^2+dT),
~\mbox{for all}~n\ge 1.
\end{equation}
Then $V_0^{\psi^n}\ge V_0^{\psi^1}\in \L^1(\mu_0)$, and we get by monotone convergence $\mu_0(V_0^{\psi^n})\ \uparrow\ \mu_0(V_0^\psi).$ 

Hence $\mu_T(\psi^n)-\mu_0(V_0^{\psi^n})\longrightarrow \mu_T(\psi)-\mu_0(V_0^\psi)$, and we deduce from the arbitrariness of $\psi\in\Cc_w$ that $\Vc(\mu_0,\mu_T)
=
\sup_{\psi\in\Cc_{w,\uparrow}}
\{\mu_T(\psi)-\mu_0(V_0^\psi)\}$.

\smallskip\noindent
(b) We next show that we may restrict further the dual maximization to $\beta$--concave potentials.
Fix now $\psi\in \Cc_{w,\uparrow}$ and define its $\beta$--concave envelope
\begin{equation}
\hat\psi:=\Tc_\beta^+\circ \Tc_\beta^-[\psi]\in \Cc_w^{\conc},
~\text{so that}~\psi\le \hat\psi\le |\psi^+|_\infty.
\end{equation}
On the other hand, notice that $V^\psi$ is l.s.c. as the supremum of continuous maps, and it follows from the easy part of dynamic programming principle (DPP) that $V^\psi$ is a l.s.c.\ viscosity
supersolution of the HJB equation \eqref{HJB} on $[0,T)\times\R^d$. Since the Hamiltonian $H$ is infinite outside its domain ${\rm dom}(H):=\{(p,A):A<\beta I_d\}$, we deduce that $V_t^\psi$ is $\beta$--concave for every $t<T$.
As $V_t^\psi\ge \psi-\frac{\beta d}2(T-t)$ (take the control $\gamma\equiv0$), we have
\begin{equation}\label{eq:terminal-layer-beta-envelope}
\liminf_{\substack{s\uparrow T\\ y\to x}} V_s^\psi(y)\ge \hat\psi(x),
\qquad x\in\R^d .
\end{equation}
We now propagate this terminal estimate backward by dynamic programming. Fix
$(t,x)\in[0,T)\times\R^d$ and let $\P\in\Pc_t(\delta_x)$ be such that
$\E^\P\int_t^T c(\gamma_r^\P)\,\d r<\infty$.
For $s\in(t,T)$, the dynamic programming principle gives
$V_t^\psi(x)
\ge \E^\P\left[ V_s^\psi(X_s)-\int_t^s c(\gamma_r^\P)\,\d r \right]$.
Let $s\uparrow T$. Since $X_s\to X_T$, the boundary layer estimate
\eqref{eq:terminal-layer-beta-envelope} gives
$\liminf_{s\uparrow T} V_s^\psi(X_s)\ge \hat\psi(X_T)$, $\P$-a.s.
To apply Fatou's lemma, we use the zero control only as a lower bound. Starting from
$z$ at time $s$, the control $(\alpha,\sigma)=(0,0)$ gives
$V_s^\psi(z)
\ge \psi(z)-\frac{\beta d}{2}(T-s)$.
Since $\psi\in\Cc_w$, there exists $C>0$ such that
$\psi(z)\ge -C(1+|z|^2)$.
Hence, for $s$ close to $T$,
\begin{equation}
V_s^\psi(X_s)
\ge
-C\Big(1+\sup_{r\in[t,T]}|X_r|^2\Big)-\frac{\beta d}{2}T.
\end{equation}
The right-hand side is $\P$-integrable by the same BDG estimate as in
Lemma~\ref{lem:supX2}. Fatou's lemma therefore yields
$\liminf_{s\uparrow T}\E^\P[V_s^\psi(X_s)]
\ge \E^\P[\hat\psi(X_T)]$.
Moreover,
\begin{equation}
\int_t^s c(\gamma_r^\P)\,\d r
\uparrow
\int_t^T c(\gamma_r^\P)\,\d r.
\quad
\text{Consequently,}
\quad
V_t^\psi(x)
\ge
\E^\P\left[
\hat\psi(X_T)-\int_t^T c(\gamma_r^\P)\,\d r
\right].
\end{equation}
Taking the supremum over all such $\P\in\Pc_t(\delta_x)$ gives
$V_t^\psi(x)\ge V_t^{\hat\psi}(x)$.
Together with the opposite inequality $V_t^\psi\le V_t^{\hat\psi}$, we obtain
$V_t^\psi=V_t^{\hat\psi}
\quad\mbox{on }[0,T)\times\R^d$.
In particular $V_0^\psi=V_0^{\hat\psi}$, and therefore
$\mu_T(\psi)-\mu_0(V_0^\psi)
\le \mu_T(\hat\psi)-\mu_0(V_0^{\hat\psi})$,
which yields the desired restriction to $\beta$--concave potentials.

\medskip
\noindent (c)
Fix $\psi\in \Cc^{\rm conc}$ and $(t,x)\in[0,T)\times\R^d$.
Fix any $y\in\R^d$ and consider on $[t,T]$ the deterministic admissible characteristics
$\alpha_s\equiv \frac{y-x}{T-t},
\ 
\sigma_s\equiv 0$.
Then $X_T=y$ a.s.\ and
$\int_t^T c(\alpha_s,\sigma_s)\ ds
=
\frac{|y-x|^2}{2(T-t)}+\frac{\beta d}{2}(T-t)$,
since $| I_d|^2=\tr( I_d)=d$.
Hence
\begin{equation}\label{eq:V-lower-stepc}
V_t^\psi(x)
\ge
\psi(y)-\frac{|y-x|^2}{2(T-t)}-\frac{\beta d}{2}(T-t)
\ge
-C_{t,\psi}(1+|x|^2).
\end{equation}

For $n\ge 1$ set $\psi_n:=\psi\wedge n$. Then $\psi_n\uparrow\psi$ and $\psi_n(y)\ge \psi_1(y)$, so \eqref{eq:V-lower-stepc} yields the uniform-in-$n$ bound
$$
V_t^{\psi_n}(x)\ge \psi_1(y)-\frac{|y-x|^2}{2(T-t)}-\frac{\beta d}{2}(T-t)\ge -C_t(1+|x|^2),
\qquad n\ge 1,
$$
with $C_t<\infty$ depending on $t,\beta,d,y,\psi_1(y)$ but not on $n$.

\medskip
\noindent (d)
Fix $\psi\in\Cc^{\rm conc}$ and set $\psi_n:=\psi\wedge n$.

\smallskip
\noindent\emph{(d1) $\mu_T(\psi_n)\uparrow \mu_T(\psi)$ in the extended sense.}
Since $\psi_n=\psi\wedge n$, we have $(\psi_n)^-=\psi^-$ and $(\psi_n)^+\uparrow \psi^+$.
Because $\psi(x)\le C(1+|x|^2)$ and $\mu_T\in\Pc_2(\R^d)$, we have $\psi^+\in \L^1(\mu_T)$.
If $\int_{\R^d}\psi^-\ d\mu_T=\infty$, then
$
\int_{\R^d}\psi_n\ d\mu_T
=
\int_{\R^d}(\psi_n)^+\ d\mu_T-\int_{\R^d}\psi^-\ d\mu_T
=
-\infty
\ \mbox{for all }n,
$
so $\mu_T(\psi_n)\uparrow \mu_T(\psi)=-\infty$. If instead $\int_{\R^d}\psi^-\ d\mu_T<\infty$, then $\psi^-\in \L^1(\mu_T)$, and we obtain $\mu_T(\psi_n)\uparrow\mu_T(\psi)$ by monotone convergence.

\smallskip
\noindent\emph{(d2) Weak duality on the enlarged class.}
Fix $\P\in\Pc(\mu_0,\mu_T)$. If $\E^\P\int_0^T c(\gamma_s^\P)\ ds=+\infty$, then there is nothing to prove. We may thus assume that
$\E^\P\int_0^T c(\gamma_s^\P)\ ds<\infty$.
Let $\{\P_x\}_{x\in\R^d}$ be a regular conditional law of $\P$ given $X_0$.
Then $\P_x\in\Pc(\delta_x)$ for $\mu_0$--a.e.\ $x$, and by definition
$V_0^{\psi_n}(x)
\ge \E^{\P_x}\!\left[ \psi_n(X_T)-\int_0^T c(\gamma_s^{\P_x})\,\d s \right]
= -J^{\psi_n}(\P_x)$.
Integrating with respect to $\mu_0$ and using disintegration,
$
\mu_T(\psi_n)-\mu_0(V_0^{\psi_n})
\le
\E^\P\int_0^T c(\gamma_s^\P)\ ds.
$
Taking the infimum over $\P\in\Pc(\mu_0,\mu_T)$ and using Proposition~\ref{prop:1stduality}, we obtain \begin{equation}\label{eq:weak-dual-n-inf-stepd} \mu_T(\psi_n)-\mu_0(V_0^{\psi_n}) \le {\rm SBB}(\mu_0,\mu_T) = \Vc(\mu_0,\mu_T), \qquad n\ge 1. \end{equation}

\smallskip
\noindent\emph{(d3) Sending $n\to\infty$.}
Since $\psi_n\uparrow\psi$, monotonicity in the terminal payoff gives $V_0^{\psi_n}\uparrow V_0^\psi$ pointwise, hence
$V_0^{\psi_n}\le V_0^\psi$.
Moreover, by Step (c) with $t=0$, there exists $C<\infty$ such that
$
V_0^{\psi_n}(x)\ge -C(1+|x|^2), \  n\ge 1,
$
so that $\mu_0(V_0^{\psi_n})$ and $\mu_0(V_0^\psi)$ are well-defined in $(-\infty,+\infty]$ and
$\mu_0(V_0^{\psi_n})\le \mu_0(V_0^\psi)$.
Thus from \eqref{eq:weak-dual-n-inf-stepd},
$
\mu_T(\psi_n)-\mu_0(V_0^\psi)
\le
\mu_T(\psi_n)-\mu_0(V_0^{\psi_n})
\le
\Vc(\mu_0,\mu_T).
$
Letting $n\to\infty$ and using $\mu_T(\psi_n)\uparrow \mu_T(\psi)$ from (d1), we obtain
\begin{equation}\label{eq:weak-dual-extended-stepd}
\mu_T(\psi)-\mu_0(V_0^\psi)\le \Vc(\mu_0,\mu_T),
\qquad \psi\in\Cc^{\rm conc}.
\end{equation}
This proves \eqref{eq:dual-enlarged-conc}.
\ep

\section{Reduced dual verification} \label{sec:dualreduced} 

The first duality result ${\rm SBB}=\Vc$ was established in the last section, and we now complement it by justifying equality with the reduced dual value $\Vc_{\rm red}$ as claimed in Theorem \ref{thm:main} (i). We start with the analysis of the regularity of the maps $u^\phi$.

\begin{lemma}\label{lem:HC_Moreau}
Fix $T,\beta>0$ and $\psi\in\bar\Cc^{\conc}$. Set $\phi:=\mathbf{T}_\beta^-[\psi]$. Then:
\begin{enumerate}
\item[{\rm (1)}] $h^\phi_t:=\Nc_{T-t}*e^\phi >0$ for all $t\le T$, and $h^\phi\in C^{1,3}(Q_T)$ satisfies
\begin{equation}
\partial_t h^\phi+\tfrac12\Delta h^\phi=0~\text{on } Q_T.
\end{equation}
\item[{\rm (2)}] $\tilde u^\phi:=\log{h^\phi}\!\in\! C^{1,3}(Q_T)$, with $D^2\tilde u^\phi_t\!+\!\kappa(t)I\succeq 0,$ $\kappa(t)\!:=\!\frac{\beta}{1+\beta(T-t)}$, $t\!<\!T$, and
\begin{equation}
\partial_t \tilde u^\phi+\tfrac12\big(\Delta \tilde u^\phi+|\nabla \tilde u^\phi|^2\big)=0~\text{on } Q_T.
\end{equation}
\item[{\rm (3)}] $v^\phi:=\mathbf{T}_\beta^+[\tilde u^\phi]\in C^{1,2}(Q_T)$, and the minimum in $\mathbf{T}_\beta^+[\tilde u^\phi_t](x)$ is uniquely attained at $\msY_t(x)$, for all $(t,x)\in Q_T$. Moreover,
\\
{\rm (3-a)} $\nabla v^\phi=\beta({\rm id}-\msY)$, $\partial_t v^\phi=\partial_t \tilde u^\phi(\msY)$, and $D^2 v^\phi-\beta I=-\beta^2\big(\beta I+D^2\tilde u^\phi(\msY)\big)^{-1}$. 
In particular, $-\frac1{T-t}I\preceq D^2 v^\phi_t\prec \beta I$, for all $t<T$.
\\ 
{\rm (3-b)} $v^\phi$ is a solution of the HJB equation \eqref{HJB}.
\\
{\rm (3-c)} If, in addition, $\psi\in\Cc_{w,\uparrow}^{\conc}$, then $v^\phi_t\to \psi$ locally uniformly as $t\uparrow T$, and $v^\phi$ has quadratic growth uniformly in $t$, i.e. there exists a constant $C<\infty$ such that
\begin{equation}
|v^\phi_t(x)|\le C(1+|x|^2),
\qquad (t,x)\in Q_T.
\end{equation}
\end{enumerate}
\end{lemma}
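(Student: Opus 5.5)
The plan is to work through the items in order, since each one feeds the next. Throughout, the structural input is that $\phi=\mathbf T_\beta^-[\psi]$ is $\beta$-convex, i.e.\ $\phi+\frac\beta2|\cdot|^2$ is convex; in particular $\phi$ is bounded below by an affine-minus-quadratic function.

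\emph{Item (1).} The integrability condition defining $\bar\Cc^{\conc}$ forces $\mathbf T_\beta^+[u_T^\phi]$ to be finite $\mu_0$-a.e. Hence $u^\phi_T(y)<\infty$ at some point $y$, i.e.\ $\int e^{\phi(z)}e^{-|y-z|^2/2T}\,\d z<\infty$. Completing squares, this Gaussian integrability with variance $T$ at one point propagates to every point and to every variance $s\le T$. Therefore $h^\phi_t=\Nc_{T-t}*e^\phi$ is finite and strictly positive. Differentiation under the integral for $s=T-t>0$ then gives $h^\phi\in C^{1,3}(Q_T)$ (indeed $C^\infty$) and the backward heat equation.

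\emph{Item (2).} The PDE for $\tilde u^\phi=\log h^\phi$ is the Cole--Hopf computation. For the semiconvexity I would use the posterior representation. With $s=T-t$ and $\pi_x(\d y)\propto e^{\phi(y)-|x-y|^2/2s}\d y$, one has
\[
D^2\tilde u^\phi_t(x)=-\tfrac1s I+\tfrac1{s^2}{\rm Cov}_{\pi_x}(Y).
\]
The potential of $\pi_x$ is $-\phi(y)+|x-y|^2/2s$, whose Hessian is at most $(\beta+\frac1s)I$ (in the distributional or convex-analysis sense) by $\beta$-convexity of $\phi$. A Cram\'er--Rao type inequality, ${\rm Cov}_\pi\succeq (\E_\pi D^2V)^{-1}$ for $\pi\propto e^{-V}$, then gives ${\rm Cov}\succeq(\beta+1/s)^{-1}I$. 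Consequently
\[
D^2\tilde u^\phi_t\succeq -\frac{\beta}{1+\beta s}\,I=-\kappa(t)\,I.
\]
Since $\phi$ is not smooth, I would first mollify $\phi$ while preserving $\beta$-convexity and pass to the limit. This step is the main obstacle: making the reverse Brascamp--Lieb/Cram\'er--Rao bound rigorous for nonsmooth $\phi$. A fallback is to show directly that $\Nc_s*e^{\phi}$ is $\kappa$-semiconvex in the log via Pr\'ekopa--Leindler applied to $e^{\phi+\beta|y|^2/2}$.

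\emph{Item (3).} Since $\kappa(t)<\beta$, the map $y\mapsto\tilde u_t(y)+\frac\beta2|x-y|^2$ is $(\beta-\kappa)$-strongly convex. It therefore has a unique minimiser $\msY_t(x)$, characterised by $\nabla\tilde u_t(\msY)=\beta(x-\msY)$. The implicit function theorem applies because $\beta I+D^2\tilde u\succeq(\beta-\kappa)I$, so $\msY$ is $C^1$ in $(t,x)$. The envelope theorem then gives $\nabla v=\beta(x-\msY)$ and $\partial_tv=\partial_t\tilde u(\msY)$. Differentiating once more gives $D\msY=\beta(\beta I+D^2\tilde u(\msY))^{-1}$, which is exactly the Hessian formula in (3-a). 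The upper bound $D^2v\prec\beta I$ is immediate from that formula. For the lower bound, use $(\beta I+D^2\tilde u)^{-1}\preceq(\beta-\kappa)^{-1}I$ together with the identity $\beta\kappa/(\beta-\kappa)=1/(T-t)$.

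For (3-b), write $\beta(\beta I-D^2v)^{-1}=I+\beta^{-1}D^2\tilde u(\msY)$. The Hamiltonian then becomes $\frac12|\nabla\tilde u(\msY)|^2+\frac12\Delta\tilde u(\msY)$, and the HJB equation reduces to the PDE of (2) evaluated at $\msY$.

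For (3-c), bounded-above $\psi$ and $\beta$-convexity give two-sided quadratic bounds on $\phi$. These transfer to $\tilde u_t$ uniformly in $t$, up to the $\log$ of Gaussian normalisations, which stay bounded because the quadratic coefficients remain below $\beta$ after the Moreau transform. The bounds then pass to $v_t$ via the Moreau transform; choosing $y=x$ gives the upper bound directly. Finally, $\tilde u_t\to\phi$ locally uniformly as $t\uparrow T$, with Moreau envelopes stable under this convergence thanks to the uniform quadratic control. Combined with the biconjugation identity $\mathbf T_\beta^+\circ\mathbf T_\beta^-[\psi]=\psi$ for $\beta$-concave $\psi$, this yields $v_t\to\psi$.
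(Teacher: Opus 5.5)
Your propagation step in item (1) is false at the endpoint $s=T$. Completing the square gives $\Nc_s(x-z)\le C_{s,x}\Nc_T(y_0-z)$ only for $s<T$, because only then is $z\mapsto-\frac{|x-z|^2}{2s}+\frac{|y_0-z|^2}{2T}$ a concave quadratic. At $s=T$ it is affine in $z$ and unbounded, so finiteness of $u^\phi_T$ at one point does not spread to other points. The failure is genuine. For $\beta\ge d+1$, the map $\phi(z)=\frac{|z|^2}{2T}-\frac{d+1}{2}\log(1+|z|^2)$ is $\beta$-convex, and $u^\phi_T$ is finite only at the origin. Then $\mathbf T^+_\beta[u^\phi_T]=u^\phi_T(0)+\frac\beta2|\cdot|^2$, so $\psi:=\mathbf T^+_\beta[\phi]\in\bar\Cc^{\conc}$ with $\mathbf T^-_\beta[\psi]=\phi$, while $h^\phi_0\equiv+\infty$ off the origin. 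Hence (1)--(3-b) can only hold on $(0,T)\times\R^d$, not on $Q_T=[0,T)\times\R^d$, which contains $t=0$. This is a defect of the statement rather than of your architecture: the paper's proof also works with $0<t<T$ throughout Steps 1--4. Still, you should restrict to $s<T$ rather than claim the endpoint.

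On $(0,T)\times\R^d$ your argument is correct, and it departs from the paper in three places.

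First, for the semiconvexity in (2) you use the posterior-covariance identity and a Cram\'er--Rao bound ${\rm Cov}_{\pi_x}\succeq\mathcal I^{-1}$ with Fisher information $\mathcal I\preceq(\beta+\frac1s)I$. This can be made rigorous: symmetric mollification preserves $\beta$-convexity, and Jensen's inequality $e^{\phi*\rho_\epsilon}\le\rho_\epsilon*e^{\phi}$ supplies the domination needed to pass to the limit. The paper needs no smoothing at all. With $G:=\phi+\frac\beta2|\cdot|^2$ convex and the shift $z=y-\frac{x}{1+\beta s}$,
\[
\tilde u^\phi_t(x)+\tfrac{\kappa(t)}{2}|x|^2=-\tfrac d2\log(2\pi s)+\log\int_{\R^d} e^{G(z+\frac{x}{1+\beta s})-\frac{1+\beta s}{2s}|z|^2}\,\d z .
\]
The integrand is log-convex in $x$ for each $z$, so H\"older's inequality gives convexity in two lines. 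Your Pr\'ekopa--Leindler fallback points the wrong way: that inequality propagates log-concavity, whereas what is needed here is log-convexity, which is exactly H\"older.

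Second, in (3) you obtain $\partial_t v^\phi$ from a joint implicit function theorem in $(t,x)$. This is legitimate, since $\tilde u^\phi$ is $C^\infty$ at positive heat times and strict convexity identifies the local branch with $\msY$. The paper instead proves continuity of $\msY$ by compactness and uses two-sided difference quotients, never differentiating $\msY$ in time.

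Third, in (3-c) your lower bound rests on the quadratic coefficient staying strictly below $\frac\beta2$. This is right, and it should be extracted from $\mathbf T^-_\beta$: $\psi\ge-A(1+|\cdot|^2)$ gives $\phi\ge-A-\frac{A\beta}{2A+\beta}|\cdot|^2$. That bound survives Gaussian smoothing by Jensen, and then survives $\mathbf T^+_\beta$. The paper avoids this bookkeeping. The subgradient inequality for $G$ and an exact Gaussian integral give $\tilde u^\phi_t\ge\phi-\frac d2\log(1+\beta T)$. Monotonicity of $\mathbf T^+_\beta$ together with $\mathbf T^+_\beta[\phi]=\psi$ then yields directly $v^\phi_t\ge\psi-\frac d2\log(1+\beta T)$.
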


\proof
By definition of $\bar\Cc^{\conc}$, we have
$\mu_0\Big(\mathbf{T}_\beta^+\Big[u_T^{\mathbf{T}_\beta^-[\psi]}\Big]
\Big)<\infty$.
Since $\phi=\mathbf{T}_\beta^-[\psi]$, it follows that
$\mu_0\big(\mathbf{T}_\beta^+[u_T^\phi]\big)<\infty$.
Hence $\mathbf{T}_\beta^+[u_T^\phi](x_*)<\infty$ for some $x_*\in\R^d$, and by definition of $\mathbf{T}_\beta^+$, we deduce that $u_T^\phi(y_0)<\infty$ for some $y_0\in\R^d$.
Equivalently, $(\Nc_T*e^\phi)(y_0)<\infty$. We continue the proof in several steps.

\medskip
\noindent{\bf 1.} We first show that $h^\phi$ and $\tilde u^\phi$ are finite on $Q_T$. 

Fix $0<t<T$ and $x\in\R^d$. Then
$\frac{\Nc_{T-t}(x-z)}{\Nc_T(y_0-z)}
=
\big(\frac T{T-t}\big)^{\frac d2}
e^{q_x(z)}$ with $q_x(z):=
-\frac{|x-z|^2}{2(T-t)}+\frac{|y_0-z|^2}{2T}$.
Notice that
$q_x(z)=-\big(\frac1{2(T-t)}-\frac1{2T}\big)|z|^2
+\big(\frac{x}{T-t}-\frac{y_0}{T}\big)\!\cdot z
-\frac{|x|^2}{2(T-t)}+\frac{|y_0|^2}{2T}$ is a concave quadratic polynomial in $z$ since $t<T$. Hence it is bounded above on $\R^d$, so there exists
$C_{t,x}<\infty$ such that
$\Nc_{T-t}(x-z)\le C_{t,x}\,\Nc_T(y_0-z),
\ z\in\R^d$.
Multiplying by $e^\phi(z)$ and integrating, we obtain
\begin{equation}\label{eq:positive-time-finite}
0<\Nc_{T-t}*e^\phi(x)\le C_{t,x}(\Nc_T*e^\phi)(y_0)<\infty
~\mbox{for all}~(t,x)\in Q_T.
\end{equation}

\medskip
\noindent{\bf 2.}
Fix a compact strip
$K=[t_0,\tau]\times B_R\subset (0,T)\times\R^d,
\  0<t_0<\tau<T$.
Choose $s_*\in(T-t_0,T)$. By \eqref{eq:positive-time-finite},
$(\Nc_{s_*}*e^\phi)(y_0)<\infty$.
For every multi-index $\alpha$ and every $m\in\{0,1\}$ with $|\alpha|+2m\le3$, we have
$\partial_s^m D_x^\alpha \Nc_s(x-z)=P_{m,\alpha,s}(x-z)\,\Nc_s(x-z)$,
where $P_{m,\alpha,s}$ is a polynomial of degree at most $3$, and its coefficients are bounded for
$s\in[T-\tau,T-t_0]$.

Since $T-t\in[T-\tau,T-t_0]$ on $K$, the same Gaussian comparison as in the previous Step~{\bf 1} gives
$\sup_{(t,x)\in K}\big|\partial_s^m D_x^\alpha \Nc_{T-t}(x-z)\big|
\le
C_{K,m,\alpha}\,\Nc_{s_*}(y_0-z)$.
Because
$\int_{\R^d}\Nc_{s_*}(y_0-z)e^\phi(z)\,dz<\infty$,
differentiation under the integral sign is justified on $K$. Therefore
$h^\phi\in C^{1,3}(Q_T)$,
\
$\partial_t h^\phi+\frac12\Delta h^\phi=0$.
Since $h^\phi>0$, also
$\tilde u^\phi=\log h^\phi\in C^{1,3}(Q_T)$,
\
$\partial_t\tilde u^\phi+\frac12\big(\Delta\tilde u^\phi+|\nabla\tilde u^\phi|^2\big)=0$.

\medskip
\noindent{\bf 3.}
Fix $t\in(0,T)$ and set
$s:=T-t,
\
\kappa(t):=\frac{\beta}{1+\beta s}$.
Let
$G(y):=\phi(y)+\frac{\beta}{2}|y|^2$.
Since $\phi$ is $\beta$--convex, $G$ is convex.
We compute
$\tilde u^\phi_t(x)
=
\log\int_{\R^d}(2\pi s)^{-\frac d2}e^{\phi(y)-\frac{|x-y|^2}{2s}}\d y$.
A direct completion of the square gives
$-\frac{\beta}{2}|y|^2-\frac{|x-y|^2}{2s}+\frac{\kappa(t)}2|x|^2
=
-\frac{1+\beta s}{2s}\big|y-\frac{x}{1+\beta s}\big|^2$.
Therefore
\begin{align}
\tilde u^\phi_t(x)+\frac{\kappa(t)}2|x|^2
&= -\frac d2\log(2\pi s)
+ \log\int_{\R^d}
e^{G(y)-\frac{1+\beta s}{2s}|y-\frac{x}{1+\beta s}|^2}\d y.
\\
&= -\frac d2\log(2\pi s)
+ \log\int_{\R^d} e^{G(z+\frac{x}{1+\beta s})-\frac{1+\beta s}{2s}|z|^2}\d z,
\end{align}
by the change of variables $z=y-\frac{x}{1+\beta s}$. For each fixed $z$, the map
$x\longmapsto G\big(z+\frac{x}{1+\beta s}\big)-\frac{1+\beta s}{2s}|z|^2$
is convex. By H\"older's inequality, the logarithm of the integral of its exponential is convex. Hence
$x\longmapsto \tilde u^\phi(t,x)+\frac{\kappa(t)}2|x|^2$
is convex, and therefore
\begin{equation}\label{eq:semiconvex-u}
D^2\tilde u^\phi_t+\kappa(t)I\succeq0.
\end{equation}

\noindent{\bf 4.}
For $(t,x,y)\in(0,T)\times\R^d\times\R^d$, define
$F(t,x,y):=\tilde u^\phi_t(y)+\frac{\beta}{2}|x-y|^2$.
By \eqref{eq:semiconvex-u},
$D_y^2F(t,x,y)=D^2\tilde u^\phi_t(y)+\beta I
\succeq (\beta-\kappa(t))I$.
Since
$\beta-\kappa(t)=\frac{\beta^2(T-t)}{1+\beta(T-t)}>0$,
the map $y\mapsto F(t,x,y)$ is strongly convex for each fixed $(t,x)$.

To see it attains its minimum, fix $(t,x)$. By Taylor's formula at $y=0$,
\begin{equation}
F(t,x,y)
\ge
F(t,x,0)+\nabla_yF(t,x,0)\cdot y+\frac{\beta-\kappa(t)}{2}|y|^2.
\end{equation}
Hence
$F(t,x,y)\ge \frac{\beta-\kappa(t)}{2}|y|^2-C_{t,x}|y|-C_{t,x}$,
which tends to $+\infty$ as $|y|\to\infty$. Thus $F(t,x,\cdot)$ is coercive, so it has a unique minimizer $\msY_t(x)$, and
$v^\phi_t(x):=F\bigl(t,x,\msY_t(x)\bigr)$.

We next prove that $\msY$ is continuous on $Q_T$. Let $(t_n,x_n)\to(t,x)$ and set $y_n:=\msY(t_n,x_n)$.
For all large $n$, we have $t_n\in[t/2,(t+T)/2]$. On this compact time interval,
$c_*:=\inf_{r\in[t/2,(t+T)/2]}(\beta-\kappa(r))>0$.
Using strong convexity at $y=0$,
\begin{equation}
F(t_n,x_n,y)\ge F(t_n,x_n,0)+\nabla_yF(t_n,x_n,0)\cdot y+\frac{c_*}{2}|y|^2.
\end{equation}
As $(t_n,x_n)$ stays in a compact set, both $F(t_n,x_n,0)$ and $\nabla_yF(t_n,x_n,0)$ are bounded uniformly in $n$,
so
$F(t_n,x_n,y)\ge \frac{c_*}{2}|y|^2-C|y|-C$.
Because $y_n$ minimizes $F(t_n,x_n,\cdot)$,
$F(t_n,x_n,y_n)\le F(t_n,x_n,0)$,
and the right-hand side is uniformly bounded. Hence $(y_n)$ is bounded. Passing to a convergent subsequence if needed,
$y_n\to y_*$, and continuity of $F$ gives
$F(t,x,y_*)=\lim_n F(t_n,x_n,y_n)\le \lim_n F(t_n,x_n,y)=F(t,x,y)$, for all $y\in\R^d$.
Thus $y_*$ minimizes $F(t,x,\cdot)$, and by uniqueness $y_*=\msY_t(x)$. Hence $\msY$ is continuous.

For fixed $t$, the minimizer satisfies the first-order condition
\begin{equation}\label{eq:first-order-Y}
\nabla_yF(t,x,\msY(t,x))=G_t(x,\msY_t(x))=0,
~\mbox{with}~
G_t(x,y):=\nabla\tilde u^\phi_t(y)-\beta(x-y).
\end{equation}
Notice that $G_t$ is $C^1$ with
$D_yG_t(x,y)=D^2\tilde u^\phi_t(y)+\beta I_d$,
which is invertible by \eqref{eq:semiconvex-u}. Then $\msY_t$ is $C^1$ by the implicit functions theorem. Differentiating \eqref{eq:first-order-Y} in $x$, we obtain
\begin{equation}\label{eq:DxY}
\nabla\msY_t(x)=\beta\bigl(D^2\tilde u^\phi_t(\msY_t(x))+\beta I_d\bigr)^{-1}.
\end{equation}
We now derive the envelope identities. Since
$v^\phi_t(x)=\tilde u^\phi_t(\msY_t(x))+\frac{\beta}{2}|x-\msY_t(x)|^2$,
the chain rule and \eqref{eq:first-order-Y}-\eqref{eq:DxY} provide
\begin{align}
\nabla v^\phi_t(x)&=\beta(x-\msY(t,x))=\nabla\tilde u^\phi_t(\msY_t(x))
\label{eq:grad-v}
\\
D^2v^\phi_t(x)
&=
\beta I_d-\beta^2\bigl(D^2\tilde u^\phi_t(\msY(t,x))+\beta I_d\bigr)^{-1},
\label{eq:hess-v}
\end{align}
and $D^2v^\phi$ inherits the continuity of $\msY$.

It remains to compute $\partial_t v^\phi$. We do not differentiate $\msY$ in time. Fix $(t,x)$ and let $h>0$.
Since $\msY_t(x)$ is admissible in the minimization defining $v^\phi_{t+h}(x)$,
$v^\phi_{t+h}(x)\le F\bigl(t+h,x,\msY_t(x)\bigr)$,
hence
\begin{equation}
\frac{v^\phi_{t+h}(x)-v^\phi_t(x)}{h}
\le
\frac{F(t+h,x,\msY_t(x))-F(t,x,\msY_t(x))}{h}
=
\frac{\tilde u^\phi_{t+h}(\msY_t(x))-\tilde u^\phi_t(\msY_t(x))}{h},
\end{equation}
and then $\limsup_{h\downarrow0}\frac{v^\phi_{t+h}(x)-v^\phi_t(x)}{h}
\le
\partial_t\tilde u^\phi_t(\msY_t(x))$.
For the reverse inequality, since $\msY_{t+h}(x)$ is admissible in the minimization defining $v^\phi_t(x)$,
$v^\phi_t(x)\le F\bigl(t,x,\msY_{t+h}(x)\bigr)$,
so
\begin{equation}
\frac{v^\phi_{t+h}(x)\!-\!v^\phi_t(x)}{h}
\ge
\frac{F(t\!+\!h,x,\msY_{t+h}(x))\!-\!F(t,x,\msY_{t+h}(x))}{h}
=
\frac{\tilde u^\phi_{t+h}(\msY_{t+h}(x))\!-\!\tilde u^\phi_t(\msY_{t+h}(x))}{h}.
\end{equation}
Because $\msY$ is continuous and $\partial_t\tilde u^\phi$ is continuous,
$\msY_{t+h}(x)\to \msY_t(x)$ and $\partial_t\tilde u^\phi_t(\msY_{t+h}(x))\to \partial_t\tilde u^\phi_t(\msY_t(x))$. Therefore
$\liminf_{h\downarrow0}\frac{v^\phi_{t+h}(x)-v^\phi_t(x)}{h}
\ge
\partial_t\tilde u^\phi_t(\msY_t(x))$.
Hence
\begin{equation}\label{eq:dt-v}
\partial_t v^\phi_t(x)=\partial_t\tilde u^\phi_t(\msY_t(x)).
\end{equation}
Since $\msY$ and $\partial_t\tilde u^\phi$ are continuous, $\partial_t v^\phi$ is continuous. Thus
$v^\phi\in C^{1,2}(Q_T)$.

\medskip
\noindent{\bf 5.}
Let $\lambda$ be an eigenvalue of $D^2\tilde u^\phi_t(\msY_t(x))$, and let $\ell$ be the corresponding eigenvalue of $D^2v^\phi_t(x)$.
By \eqref{eq:hess-v},
$\ell
=
\beta-\frac{\beta^2}{\beta+\lambda}
=
\frac{\beta\lambda}{\beta+\lambda}$.
Since \eqref{eq:semiconvex-u} gives $\lambda\ge -\kappa(t)$, and $\lambda\mapsto \beta\lambda/(\beta+\lambda)$ is increasing on
$(-\beta,\infty)$,
$-\frac{\beta\kappa(t)}{\beta-\kappa(t)}\le \ell<\beta$.
Using
$\frac{\beta\kappa(t)}{\beta-\kappa(t)}=\frac{1}{T-t}$,
we obtain
$-\frac{1}{T-t}I\preceq D^2v^\phi(t,\cdot)\prec \beta I_d$.

Recall that the Hamiltonian for $A<\beta I_d$,
$H(p,A)
=
\frac12|p|^2+\frac{\beta}{2}(\tr(I-\frac1\beta A)^{-1}-d)$.
With $p=\nabla v^\phi_t(x)$ and $A=D^2v^\phi_t(x)$, and using \eqref{eq:hess-v},
\begin{equation}
\Big(I-\frac1\beta D^2v^\phi_t(x)\Big)^{-1}
=
I+\frac1\beta D^2\tilde u^\phi_t(\msY_t(x)).
\end{equation}
Therefore
$H(\nabla v^\phi,D^2v^\phi)(t,x)
=
\frac12|\nabla\tilde u^\phi_t(\msY_t(x))|^2+\frac12\Delta\tilde u^\phi_t(\msY_t(x))$.
Using \eqref{eq:grad-v}, \eqref{eq:dt-v}, and the Cole--Hopf equation for $\tilde u^\phi$, we obtain
$\partial_t v^\phi+H(\nabla v^\phi,D^2v^\phi)=0
\ \text{on }Q_T$.

\smallskip
\noindent {\bf 6.} For $\psi\in\Cc_{w,\uparrow}^{\conc}$, we have $-Cw\le\psi\le C$ for some constant $C$, implying that $\phi\le C$ and
$\phi(y)
\ge \sup_{x\in\R^d}\left\{-C(1+|x|^2)-\frac{\beta}{2}|x-y|^2\right\}
\ge -C'(1+|y|^2)$ for some constant $C'<\infty$.
As $\phi$ is bounded from above, we deduce that $h^\phi_t\to e^\phi$ locally uniformly as $t\uparrow T$. The local uniform convergence $\tilde u^\phi(t,\cdot)\to \phi$ is inherited from that of $h^\phi_t$ towards $e^\phi$.
By the stability of the Moreau envelop under local uniform convergence, we see that
$v^\phi_t=\mathbf{T}_\beta^+[\tilde u^\phi_t]\to \mathbf{T}_\beta^+[\phi]=\psi$ locally uniformly.

We next justify the claimed growth for $v^\phi$. First, $v^\phi_t\le\tilde u^\phi_t\le \sup\phi$.
Moreover, by the $\beta-$convexity of $\phi$, for each $y$ there exists $\xi\in\partial(\phi+\frac{\beta}{2}|\cdot|^2)(y)$ such that
\begin{equation}
\phi(z)\ge \phi(y)+(\xi-\beta y)\cdot(z-y)-\frac{\beta}{2}|z-y|^2,
~\mbox{for all}~z\in\R^d.
\end{equation}
Using this inequality with $s=T-t$ and $z=y+W_s$, $W_s\sim \Nc(0,sI)$, we get
\begin{align}
h^\phi_t(y)
=\E\big[e^{\phi(y+W_s)}\big]
&\ge e^{\phi(y)}\ \E\big[e^{(\xi-\beta y)\cdot W_s-\frac{\beta}{2}|W_s|^2}\big]
\\
&=e^{\phi(y)}\ (1+\beta s)^{-d/2}e^{\frac{s}{2(1+\beta s)}|\xi-\beta y|^2}
\ \ge\ e^{\phi(y)}\ (1+\beta T)^{-d/2}.
\end{align}
Then
$\tilde u^\phi_t(y)=\log h^\phi_t(y)
\ge \phi(y)-\frac{d}{2}\log(1+\beta T)=:\phi(y)-C_T$, and
\begin{equation}
v^\phi_t(x)=\inf_y\Big\{\tilde u^\phi_t(y)+\frac{\beta}{2}|x-y|^2\Big\}
\ge \inf_y\Big\{\phi(y)+\frac{\beta}{2}|x-y|^2\Big\}-C_T
=\psi(x)-C_T.
\end{equation}
Since $\psi\in\Cc_{w,\uparrow}^{\conc}$, this proves the claimed quadratic growth of $v^\phi$.
\ep

\vspace{5mm}

We now have all ingredients to identify the maps $V^{\psi}$ and $v^\phi$ by means of a verification argument. We first note that
\begin{equation}\label{eq:Cwuparrow-in-barC}
\Cc_{w,\uparrow}^{\conc}\subset \bar\Cc^{\conc}.
\end{equation}

\begin{lemma}\label{lem:verification}
Let $\psi\in\Cc^{\rm conc}_{w,\uparrow}$ and set $\phi:=\mathbf{T}_\beta^-[\psi]$. Then $V^{\psi}=v^\phi$ on $Q_T$.
\end{lemma}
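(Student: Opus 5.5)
The identity $V^\psi=v^\phi$ on $Q_T$ will follow from a classical verification argument. Lemma \ref{lem:HC_Moreau} (3) already gives that $v^\phi\in C^{1,2}(Q_T)$ solves the HJB equation \eqref{HJB} with $D^2v^\phi<\beta I_d$. Part (3-c) adds that $v^\phi_t\to\psi$ locally uniformly as $t\uparrow T$ and that $|v^\phi|\le C(1+|x|^2)$ uniformly in $t$. We prove the two inequalities separately. In both directions we work on $[t,T-\varepsilon]$ and let $\varepsilon\downarrow0$, since $v^\phi$ is only known to be smooth on the open strip.

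\emph{Upper bound $V^\psi\le v^\phi$.} Fix $(t,x)\in Q_T$ and $\P\in\Pc_t(\delta_x)$ with $\E^\P\int_t^Tc(\gamma_r^\P)\d r<\infty$; for the other $\P$ the criterion is $-\infty$, since $\psi\le|\psi^+|_\infty$. Apply It\^o's formula to $v^\phi(s,X_s)$ on $[t,\tau_n\wedge(T-\varepsilon)]$, where $\tau_n$ localizes the stochastic integral. The definition of $H$ as a supremum gives pointwise
\[
\partial_tv^\phi+\alpha\cdot\nabla v^\phi+\tfrac12\sigma\sigma^\intercal:D^2v^\phi-c(\alpha,\sigma)\le \partial_tv^\phi+H(\nabla v^\phi,D^2v^\phi)=0 .
\]
Hence $v^\phi(t,x)\ge\E^\P[v^\phi(\tau_n\wedge(T-\varepsilon),X_{\tau_n\wedge(T-\varepsilon)})-\int_t^{\tau_n\wedge(T-\varepsilon)}c\,\d r]$. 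Let $n\to\infty$ and then $\varepsilon\to0$. The quadratic growth of $v^\phi$ together with $\E^\P[\sup|X|^2]<\infty$ (the BDG estimate of Lemma \ref{lem:supX2}) gives domination. Local uniform convergence $v^\phi_s\to\psi$, combined with continuity of paths, gives $v^\phi(s,X_s)\to\psi(X_T)$ a.s. Monotone convergence handles the cost term. This yields $v^\phi(t,x)\ge\E^\P[\psi(X_T)-\int_t^Tc\,\d r]$, and taking the supremum over $\P$ gives the inequality.

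\emph{Lower bound $V^\psi\ge v^\phi$.} We use the feedback controls $\hat a=\nabla v^\phi=\beta({\rm id}-\msY)$ and $\hat b=\beta(\beta I-D^2v^\phi)^{-1}=I+\frac1\beta D^2\tilde u^\phi(\msY)$, with $\hat\sigma:=\hat b^{1/2}$. The issue is well-posedness of the SDE $\d X=\hat a(s,X)\d s+\hat\sigma(s,X)\d W$ on $[t,T-\varepsilon]$. The coefficients are continuous, but we do not control their growth. We would avoid this by working with the Schr\"odinger diffusion in the $y$ variable, following the structure anticipated in Theorem \ref{thm:main}(ii-c). Let $Y$ solve $\d Y_s=\nabla\tilde u^\phi_s(Y_s)\d s+\d W_s$ with $Y_t=\msY_t(x)$. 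This process is the $h$-transform of Brownian motion by $h^\phi$, so it exists on $[t,T)$ through the Girsanov density $h^\phi_s(Y_s)/h^\phi_t(Y_t)$, which is a true martingale under Brownian motion because $h^\phi$ is space-time harmonic and positive. Then set $X_s:=\msX_s(Y_s)=Y_s+\frac1\beta\nabla\tilde u^\phi_s(Y_s)$, using the relation $\msX=\msY^{-1}$ derived from \eqref{eq:grad-v}. Applying It\^o's formula with the Cole--Hopf equation, $X$ has drift $\nabla\tilde u^\phi(Y)=\hat a(X)$ and diffusion $I+\frac1\beta D^2\tilde u^\phi(Y)=\hat\sigma\hat\sigma^\intercal(X)$. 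Invertibility of $\msY_s$ comes from strong convexity, as in Step 4 of Lemma \ref{lem:HC_Moreau}.

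Along this process the It\^o inequality above becomes an equality, so $v^\phi(t,x)=\E[v^\phi(T-\varepsilon,X_{T-\varepsilon})-\int_t^{T-\varepsilon}c(\hat\gamma)\d r]$. Extending the control to $[T-\varepsilon,T]$ by $\gamma\equiv(0,I_d)$ costs nothing and gives $V^\psi_t(x)\ge\E[V^\psi_{T-\varepsilon}$-type bounds$]$. More simply, we use DPP: $V^\psi_t(x)\ge\E[V^\psi_{T-\varepsilon}(X_{T-\varepsilon})-\int c]$, and then $V^\psi_{T-\varepsilon}\ge$ the Brownian continuation value $\E[\psi(z+W_\varepsilon)]$, which tends to $\psi$. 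Passing $\varepsilon\to0$ then requires $\liminf\E[\psi(X_{T-\varepsilon}+W_\varepsilon)]\ge\limsup\E[v^\phi(T-\varepsilon,X_{T-\varepsilon})]$ up to $o(1)$. Since $\psi\ge v^\phi_s-C(T-s)$ fails in general, the cleanest route is to compare directly: $\psi$ is $\beta$-concave with $\psi\ge -Cw$, and $\E\psi(z+W_\varepsilon)\ge\psi(z)-\frac{\beta d}2\varepsilon$. So it suffices to show $\E[(v^\phi_{T-\varepsilon}-\psi)^+(X_{T-\varepsilon})]\to0$, which needs uniform integrability of $|X_{T-\varepsilon}|^2$ under the optimal feedback.

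The \textbf{main obstacle} is exactly this: constructing the optimal (or $\eta$-optimal) feedback process and controlling it near $T$. We would first try to get the second-moment bound from the finiteness of the expected cost, through the equality $v^\phi(t,x)+\E\int c=\E v^\phi(T-\varepsilon,\cdot)\le\sup\psi$, combined with the BDG estimate; that bound then feeds uniform integrability via the quadratic growth in (3-c). If this fails, the fallback is to use $\eta$-optimal Markov controls from a mollified, truncated feedback.
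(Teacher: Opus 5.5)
Your architecture matches the paper's. The upper bound $V^\psi\le v^\phi$ is exactly its Step 1: It\^o on $[t,T-\varepsilon]$, the supersolution inequality from the definition of $H$, and domination by the quadratic growth of Lemma~\ref{lem:HC_Moreau}\,(3-c) plus the BDG bound. For the lower bound you build, as the paper does, the $h^\phi$-transform of Brownian motion started at $\msY_t(x)$, push it through $\msX_s={\rm id}+\frac1\beta\nabla\tilde u^\phi_s$, and close with a cost-free Brownian continuation on $[T-\varepsilon,T]$.

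\emph{The gap.} The inequality you use to pass to the limit is reversed. For $\beta$-concave $\psi$, Jensen applied to the concave map $\psi-\frac\beta2|\cdot|^2$ gives $\E\psi(z+W_\varepsilon)\le\psi(z)+\frac{\beta d}2\varepsilon$. Your inequality is the one valid for $\beta$-convex maps such as $\phi$. For example, $\psi=-|\cdot|\in\Cc^{\rm conc}_{w,\uparrow}$ gives $\E\psi(W_\varepsilon)=-\sqrt\varepsilon\,\E|W_1|<-\frac{\beta d}2\varepsilon$ for small $\varepsilon$. So the reduction to $\E[(v^\phi_{T-\varepsilon}-\psi)^+(X_{T-\varepsilon})]\to0$ is not justified. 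There are two easy repairs:
\begin{itemize}
\item[(i)] As in the paper, bound $v^\phi_t(x)-V^\psi_t(x)\le\E[(v^\phi_{T-\varepsilon}-\psi_\varepsilon)(X_{T-\varepsilon})]$ with $\psi_\varepsilon:=\E[\psi(\cdot+W_\varepsilon)]$. Then use that $v^\phi_{T-\varepsilon}$ and $\psi_\varepsilon$ both converge to $\psi$ locally uniformly, with quadratic growth uniform in $\varepsilon$.
\item[(ii)] Continue on $[T-\varepsilon,T]$ with the frozen control $(0,0)$ instead of $(0,I_d)$. This gives $X_T=X_{T-\varepsilon}$ at cost exactly $\frac{\beta d}2\varepsilon$, and your reduction then holds as written.
\end{itemize}

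\emph{Two smaller corrections.} First, in the paper's Hamiltonian $b$ enters through $\frac12bb^\intercal:A$, so $\hat b=\beta(\beta I_d-D^2v^\phi)^{-1}$ is the volatility itself, not the covariance. Indeed $D\msX_s(Y_s)=I_d+\frac1\beta D^2\tilde u^\phi_s(Y_s)=\hat b(s,X_s)$ multiplies $\d W$, and the covariance is $\hat b^2$. The process you construct is the right one, but setting $\hat\sigma:=\hat b^{1/2}$ and equating $I+\frac1\beta D^2\tilde u^\phi(Y)$ with $\hat\sigma\hat\sigma^\intercal(X)$ conflates the two; take $\hat\sigma:=\hat b$. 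Second, positivity and space-time harmonicity alone only make $h^\phi_s(B_s)$ a supermartingale. It is a true martingale because it equals $\E[e^{\phi(B_T)}\mid\Fc_s]$, with $B$ the reference Brownian motion.

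\emph{Your moment bound works, by a different route.} The obstacle you flag is settled by your first suggestion, with no fallback needed.
\begin{itemize}
\item Since $v^\phi_s\le\tilde u^\phi_s\le\sup\phi\le\sup\psi<\infty$, stop the It\^o identity along the feedback process at $\tau_n:=\inf\{s\ge t:|X_s|\ge n\}\wedge(T-\varepsilon)$; this avoids circularity. It gives $\E\int_t^{\tau_n}c(\hat\gamma_s)\d s\le\sup\psi-v^\phi_t(x)$, uniformly in $n$ and $\varepsilon$.
\item Monotone convergence and BDG, with $|\hat b|^2\le2|\hat b-I_d|^2+2d$, then yield $\E[\sup_{t\le s<T}|X_s|^2]<\infty$. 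This holds for the single process defined by the closed density $e^{\phi(B_T)}/h^\phi_t(\msY_t(x))$.
\item This variable dominates everything you need, both to remove the localization and to send $\varepsilon\to0$.
\end{itemize}
The paper instead proves the linear growth $|\nabla\tilde u^\phi_s(z)|\le C_t(1+|z|)$ uniformly on $[t,T)$, using semiconvexity and the quadratic bounds on $\phi$. It then runs BDG--Gronwall on $Y$ and transfers the bound to $X=Y+\frac1\beta\nabla\tilde u^\phi(Y)$. Your argument is shorter but uses $\sup\psi<\infty$ essentially. The paper's gradient-growth device does not need this, which is why essentially the same device reappears in the primal attainment argument, where $\hat\phi$ need not be bounded above.
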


\proof In the rest of this proof, we denote $t_m:=T-\frac1m$, $m\in\N$. 

\smallskip
\noindent {\bf 1.} Let $\P\in\Pc_t(\delta_x)$ be such that $\E^\P\int_t^T c(\gamma_s^\P)\,\d s<\infty$, and recall from Lemma~\ref{lem:supX2} that $\E^\P[\sup_{r\in[t,T]}|X_r|^2]<\infty$. By the $C^{1,2}$ regularity of $v:=v^\phi$ and its quadratic growth established in Lemma \ref {lem:HC_Moreau} (3), and since the stochastic integral in It\^o's formula is a true martingale, we obtain
$$
\E^\P[v_{t_m}(\!X_{t_m})]-v_t(x)
\!=\!
\E^\P\!\Big[\int_t^{t_m}\!\!\!\!\big(\partial_sv_s+\alpha_s^\P\cdot\nabla v_s+\frac12\,\sigma_s^\P(\sigma_s^\P)^\intercal\!\!:\!\!D^2v_s\big)(X_s)\d s\Big]
\le
\E^\P\!\Big[\int_t^{t_m}\!\!\!\!\!c(\gamma_s^\P)\d s\Big],
$$
where the last inequality follows from the fact that $v$ is a supersolution of the HJB equation \eqref{HJB}. Since $X_{t_m}\to X_T$ $\P$-a.s., $v_{t_m}\to\psi$ locally uniformly, as $m\nearrow\infty$, and
$|v_{t_m}(X_{t_m})|\le C(1+\sup_{r\in[t,T]}|X_r|^2)\in\L^1$, we see that $v_{t_m}(X_{t_m})\to\psi(X_T)$ in $\L^1(\P)$ by dominated convergence. Then, as the cost function $c\ge0$, we may now deduce by monotone convergence that
$$
v_t(x)
\ge
\E^\P\Big[\psi(X_T)-\int_t^T c(\gamma_s^\P)\,\d s\Big],
$$
and by arbitrariness of $\P\in\Pc_t(\delta_x)$, we get $v_t(x)\ge V_t^\psi(x)$.

\smallskip
\noindent {\bf 2.} We next prove the reverse inequality $v_t(x)\le V_t^\psi(x)$. By Lemma~\ref{lem:HC_Moreau} (3),
the minimizer $\msY_t(x)$ in $v_t(x)=\Tc_\beta^+[\tilde u_t](x)$ is unique. Set $y:=\msY_t(x)$, so that $x=\msX_t(y)$. Let $Y$ be a $\Q$-Brownian motion on $[t,t_m]$ with $Y_t=y$, and notice that the upper boundedness of $\phi$ implies that $Z_s:=\frac{h_s^\phi(Y_s)}{h_t^\phi(y)}$ is a positive bounded martingale inducing an equivalent probability measure $\Q^m$ on $\Fc_T$ by $\d \Q^m:=Z_{t_m}\,\d \Q$. By Girsanov's theorem the process
$W_s:=Y_s-\int_t^{s\wedge t_m}\nabla\log h_r^\phi(Y_r)\,\d r$
defines a $\Q^m-$Brownian motion on $[0,T]$, and we may then write the dynamics of $Y$ as
\begin{equation}\label{eq:Y_sde}
\d Y_s=\nabla\log h_s^\phi(Y_s)\1_{s\in [t,t_m]}\,\d s+\d W_s,\qquad Y_t=y.
\end{equation}
Denote
$X_s:=\msX_s(Y_s)\1_{s\in [t,t_m]}+(X_{t_m}+\int_{t_m}^s \d W_s)\1_{s\in (t_m,T]}$,
where we recall that $\msX_s:=id+\frac1\beta\nabla\tilde u_s$. By
Lemma~\ref{lem:HC_Moreau} (3-a), it follows that
\begin{equation}\label{eq:grad-v-XY}
\nabla v_s(X_s)=\beta(X_s-Y_s)=\nabla\tilde u_s(Y_s)=\nabla\log h^\phi(s,Y_s),
\qquad s\in[t,t_m].
\end{equation}
Set
$\widehat\alpha_s:=\nabla v_s,
\
\widehat\sigma_s:=\Big( I_d-\frac{D^2v_s}{\beta}\Big)^{-1}$.
Since $\msY_s:=id-\frac1\beta\nabla v_s$ is the inverse of $\msX_s$, differentiating the identity
$\msY_s(\msX_s(y))=y$ yields
$D\msX_s(y)=\Big( I_d-\frac{D^2v_s(\msX_s(y))}{\beta}\Big)^{-1}$,
and therefore
$D\msX_s(Y_s)=\widehat\sigma_s(X_s),
\ s\in[t,t_m]$.
By Lemma~\ref{lem:HC_Moreau} {\it (1)--(2)}, we have $\tilde u^\phi\in C^{1,3}(Q_T)$, and therefore
$(s,y)\longmapsto \msX_s(y):=y+\frac1\beta\nabla\tilde u^\phi_s(y)$ is of class $C^{1,2}$ on $Q_T$.
Applying Itô to $X_s=X(s,Y_s)$ on $[t,t_m]$ using \eqref{eq:grad-v-XY}, we obtain
\begin{align}
\d X_s
&=
\Big(
\partial_s\msX_s+D\msX_s\,\nabla\tilde u_s+\frac12\Delta\msX_s
\Big)(Y_s)\,\d s
+
D\msX_s(Y_s)\,\d W_s
\\
&=
\Big(
\nabla\tilde u_s+\frac1\beta\Big(
\nabla\partial_s\tilde u_s+D^2\tilde u_s\,\nabla\tilde u_s+\frac12\nabla\Delta\tilde u_s
\Big)
\Big)(Y_s)\,\d s
+
\widehat\sigma_s(X_s)\,\d W_s.
\end{align}
Since $h^\phi$ solves the backward heat equation, differentiating the PDE of $\tilde u=\log h^\phi$ gives
$\nabla\partial_s\tilde u_s+D^2\tilde u_s\,\nabla\tilde u_s+\frac12\nabla\Delta\tilde u_s=0$.
Hence, using also \eqref{eq:grad-v-XY}, we get
\begin{align}
X_t=x:=\msX_t(y),~
&dX_s=\widehat\alpha_s(X_s)\,ds+\widehat\sigma_s(X_s)\,dW_s,~ s\in[t,t_m],
\\
&\mbox{and}~
X_s=X_{t_m}+W_s-W_{t_m}~\mbox{for}~s\in[t_m,T].
\end{align}
Let $\P^{t,x,m}$ be the law of $X$
under $\Q^m$. Then $\P_{t,x}^{m}\in\Pc_t(\delta_x)$ with characteristics $\gamma^{\P_{t,x}^{m}}_s=(\widehat\alpha,\widehat\sigma)(s,X_s)\1_{s\in[t,t_m]}+(0,I_d)\1_{s\in[t_m,T]}$. As $c(\gamma^{\P_{t,x}^{m}})=0$ on $[t_m,T]$, it follows from the HJB equation \eqref{HJB} satisfied by $v$ that
\begin{equation}\label{eq:exact_trunc}
v_t(x)
=
\E^{\P_{t,x}^{m}}\Big[
v_{t_m}(X_{t_m})-\int_t^{t_m} c(\gamma^{\P_{t,x}^{m}}_s)\,\d s
\Big].
\end{equation}
Let $\delta_m:=\sqrt{T-t_m}$, let $Z$ be a standard Gaussian random variable independent of
$\Fc_{t_m}$, and define
$\psi_m(x):=\E[\psi(x+\delta_m Z)]$.
Since $c(\gamma^{\P_{t,x}^{m}})=0$ on $[t_m,T]$, we have
$$\E^{\P_{t,x}^{m}}\Big[ \psi(X_T)-\int_t^T c(\gamma^{\P_{t,x}^{m}}_s)\,\d s \Big]
= \E^{\P_{t,x}^{m}}\Big[
\psi_m(X_{t_m})-\int_t^{t_m} c(\gamma^{\P_{t,x}^{m}}_s)\,\d s
\Big].$$
Consequently,
$v_t(x)
= \E^{\P_{t,x}^{m}}\Big[ \psi(X_T)-\int_t^T c(\gamma^{\P_{t,x}^{m}}_s)\,\d s \Big] +\eps_m
\le V_t^\psi(x)+\eps_m$,
where $\eps_m := \E^{\P_{t,x}^{m}}\big[ v_{t_m}(X_{t_m})-\psi_m(X_{t_m}) \big]$.  Since $(v_{t_m},\psi_m)\to(\psi,\psi)$ locally uniformly as $m\nearrow\infty$ and both
$v_{t_m}$ and $|\psi_m|$ have quadratic growth uniformly in $m$, it remains to justify the
uniform square-integrability of $(X_{t_m})_{m\ge1}$ under $(\P_{t,x}^{m})_{m\ge1}$. 

We first record a uniform linear-growth bound. Since
$\psi\in\Cc^{\rm conc}_{w,\uparrow}$, the function
$\phi=\mathbf T_\beta^-[\psi]$ is bounded from above and satisfies a quadratic lower bound. Hence, by the same convexity argument as in Lemma~\ref{lem:HC_Moreau}, there exists a constant $C_t<\infty$ such that
$|\nabla u_r^\phi(z)|\le C_t(1+|z|),
\  0<r\le T-t,\ z\in\R^d $.
Equivalently,
\[
|\nabla \tilde u_s^\phi(z)|\le C_t(1+|z|),
\qquad t\le s<T,\ z\in\R^d .
\]
Under $\Q^m$, the process $Y$ satisfies
$\d Y_s=\nabla\tilde u_s^\phi(Y_s)\,\d s+\d W_s,\  s\in[t,t_m]$,
and therefore the preceding linear-growth estimate, together with the BDG inequality and Gronwall's lemma, gives
$\sup_{m\ge1}\E^{\Q^m}\Big[\sup_{t\le s\le t_m}|Y_s|^2\Big]<\infty$.
Since
$X_{t_m}
= Y_{t_m}+\frac1\beta\nabla\tilde u_{t_m}^\phi(Y_{t_m})$,
the same linear-growth bound yields
$\sup_{m\ge1}\E^{\P_{t,x}^{m}}\big[|X_{t_m}|^2\big]<\infty$.
Thus, for every $R>0$,
\[
\sup_{|z|\le R}|v_{t_m}(z)-\psi_m(z)|\longrightarrow0,
\]
while the tails are controlled uniformly by the quadratic growth and the above uniform
square-integrability. Hence $\eps_m\to0$, inducing the required inequality
$v_t(x)\le V_t^\psi(x)$.  
\ep

\vspace{5mm}

We also record that the relaxed dual has the same value. Indeed, by \eqref{eq:Cwuparrow-in-barC}, we have
$\Cc_{w,\uparrow}^{\conc}\subset \bar\Cc^{\conc}$. Hence
\begin{equation}
\bar\Vc(\mu_0,\mu_T)
\ge \sup_{\psi\in\Cc_{w,\uparrow}^{\conc}}
\{\mu_T(\psi)-\mu_0(V_0^\psi)\}
= \Vc(\mu_0,\mu_T).
\end{equation}
On the other hand, since $\bar\Cc^{\conc}\subset\Cc^{\conc}$, Lemma~\ref{lem:reduce0} gives
$\bar\Vc(\mu_0,\mu_T)\le \Vc(\mu_0,\mu_T)$.
Therefore
\begin{equation}\label{eq:barV-equals-V}
\bar\Vc(\mu_0,\mu_T)=\Vc(\mu_0,\mu_T).
\end{equation}

\noindent {\bf Proof of Theorem \ref{thm:main} (i)} 
By Moreau biconjugation
$\{\phi=\mathbf{T}_\beta^-[\psi]:\ \mbox{for } \psi\in\Cc^{\rm conc}_{w,\uparrow}\}
= \Cc^{\rm conv}_{w,\uparrow}$.
Combining Proposition~\ref{prop:1stduality}, Lemma~\ref{lem:reduce0},
Lemma~\ref{lem:verification}, and \eqref{eq:barV-equals-V}, we obtain
\begin{equation}\label{dualuparrow}
{\rm SBB}(\mu_0,\mu_T)
= \Vc(\mu_0,\mu_T)
= \bar\Vc(\mu_0,\mu_T)
= \sup_{\phi\in\Cc^{\rm conv}_{w,\uparrow}}
\left\{
\mu_T(\mathbf{T}_\beta^+[\phi])-\mu_0(v_0^\phi)
\right\}.
\end{equation}
It remains to prove that the value of the supremum on the right-hand side is not affected by enlarging the set of dual potential maps from $\Cc^{\rm conv}_{w,\uparrow}$ to $\Cc^{\rm conv}_w$. To see this, fix $\phi\in\Cc^{\rm conv}_w$ and set
$\psi:=\mathbf T_\beta^+[\phi]$. For $n\ge1$, define
\[
\chi_n:=\psi\wedge n,
\qquad
\phi_n:=\mathbf T_\beta^-[\chi_n].
\]
Since $\chi_n\in\Cc_w$ is bounded from above, we have
$\phi_n\in\Cc^{\rm conv}_{w,\uparrow}$. Moreover,
$\mathbf T_\beta^+[\phi_n]=\mathbf T_\beta^+\mathbf T_\beta^-[\chi_n]$
is the $\beta$--concave envelope of $\chi_n$. Hence
$\chi_n
\le \mathbf T_\beta^+[\phi_n]
\le \psi$,
where the last inequality follows from the fact that $\psi$ is itself a $\beta$--concave majorant of $\chi_n$. Since $\chi_n\uparrow\psi$ and $\psi\in\Cc_w$, we obtain
$\mu_T\big(\mathbf T_\beta^+[\phi_n]\big)
\longrightarrow
\mu_T(\psi)$.
On the other hand, $\chi_n\le\psi$ implies
$\phi_n
= \mathbf T_\beta^-[\chi_n]
\le \mathbf T_\beta^-[\psi]
= \phi$.
Therefore
$u_T^{\phi_n}\le u_T^\phi,
\ \mathbf T_\beta^+[u_T^{\phi_n}]
\le \mathbf T_\beta^+[u_T^\phi]$,
and consequently
$\mathfrak J(\phi_n)
= \mu_T\big(\mathbf T_\beta^+[\phi_n]\big)
- \mu_0\big(\mathbf T_\beta^+[u_T^{\phi_n}]\big)
\ge \mu_T\big(\mathbf T_\beta^+[\phi_n]\big)
- \mu_0\big(\mathbf T_\beta^+[u_T^\phi]\big)$.
Letting $n\to\infty$ yields
$\liminf_{n\to\infty}\mathfrak J(\phi_n)
\ge \mu_T(\psi) - \mu_0\big(\mathbf T_\beta^+[u_T^\phi]\big)
= \mathfrak J(\phi)$.
This proves that the supremum over $\Cc^{\rm conv}_{w,\uparrow}$ is equal to the supremum over $\Cc^{\rm conv}_w$.
\ep

\section{Dual attainment}
\label{sec:dualattainment}

Our objective in this section is to prove Theorem \ref{thm:main} (ii-a), namely the existence of potential $\hat\psi$ attaining the dual value $\Vc(\mu_0,\mu_T)$, and a dual potential $\hat\phi$ attaining the reduced dual value $\Vc_{\rm red}(\mu_0,\mu_T)$. We shall denote throughout:
\begin{equation}\label{Udeltaphi}
U^\phi_\eta
:=
u_{\eta T}^{\phi}(0)
=
\log\E[e^{\phi(W_{\eta T})}],
~\text{for some fixed}~\eta\in(0,1).
 \end{equation}

\begin{lemma}\label{lem:bounds}
For every $\phi\in\Cc^{\rm conv}_{w,\uparrow}$ with $\phi(0)=0$, we have:
\begin{enumerate}
\item[\rm (i)] There is a constant $\nu_\eta$ (depending on $\beta,T,d,\eta$) such that
\begin{equation}
-\nu_\eta\le U^\phi_\eta
\le
-\frac d2\log(\eta)+\inf_{x\in\R^d} u^\phi_T(x)+\frac{|x|^2}{2(1-\eta)T}.
\end{equation}

\item[\rm (ii)] There exists a constant $\nu'_\eta$ (depending on $\beta,T,d,\eta$), such that for all $x\in\R^d$:
\begin{subequations}
\begin{align}
\hspace{-15mm}u_s^\phi(x)
& \ge
    -\frac{\beta|x|^2}{2}
    +\1_{\{s>0\}}\Big(\frac{\beta^2|x|^2}{4s}-\frac{(\nu'_\eta\!+\!U^\phi_\eta)^2}{\beta^2s}\Big)
    \!-\!\1_{\{s=0\}}(\nu'_\eta\!+\!U^\phi_\eta)|x|,
~s\!\in\![0,T],\label{eq:u_low_bound}
\\
\hspace{-15mm} u^\phi_s(x)
&\le
U^\phi_\eta+\nu_\eta
-\frac d2\log\Big(1-\frac{s}{\eta T}\Big)
+\frac{|x|^2}{2(\eta T-s)},
~s\in[0,\eta T).
\label{eq:u_up_bound}
\end{align}
\end{subequations}
\end{enumerate}
\end{lemma}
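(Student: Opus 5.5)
The plan is to derive all four bounds from two facts. First, since $\phi\in\Cc^{\rm conv}_{w,\uparrow}$, the map $G:=\phi+\frac\beta2|\cdot|^2$ is convex and finite, so it has a subgradient $\xi\in\partial G(0)$. Together with $\phi(0)=0$ this gives the quadratic minorant
\begin{equation}
\phi(z)\ \ge\ \xi\cdot z-\frac\beta2|z|^2,\qquad z\in\R^d .
\end{equation}
Second, we use the semigroup property of the heat kernel, $e^{u^\phi_{s+r}}=\Nc_r*e^{u^\phi_s}$, together with Gaussian ratio comparisons of the kind used in Step~1 of the proof of Lemma~\ref{lem:HC_Moreau}.

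\textbf{Lower bound in (i).} Plug the minorant into $U^\phi_\eta=\log\E[e^{\phi(W_{\eta T})}]$. The Gaussian computation from Step~6 of the proof of Lemma~\ref{lem:HC_Moreau} gives
\begin{equation}
U^\phi_\eta\ \ge\ -\frac d2\log(1+\beta\eta T)+\frac{\eta T|\xi|^2}{2(1+\beta\eta T)} .
\end{equation}
This yields the lower bound with $\nu_\eta:=\frac d2\log(1+\beta\eta T)$. It also yields the by-product $|\xi|^2\le C_\eta(U^\phi_\eta+\nu_\eta)$, which will control $\xi$ in (ii).

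\textbf{Upper bound in (i).} Write $e^{U^\phi_\eta}=\int\Nc_{\eta T}(z)e^{\phi(z)}\d z$ and compare pointwise with $\Nc_T(x-z)$. The ratio $\Nc_{\eta T}(z)/\Nc_T(x-z)$ equals $\eta^{-d/2}$ times the exponential of the concave quadratic $-\frac{|z|^2}{2\eta T}+\frac{|x-z|^2}{2T}$. Maximising over $z$ gives exactly $\frac{|x|^2}{2(1-\eta)T}$. Integrating against $e^\phi$, taking logarithms and then the infimum over $x$ gives the stated bound.

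\textbf{Lower bound \eqref{eq:u_low_bound}.} For $s=0$ we have $u^\phi_0=\phi\ge -|\xi||x|-\frac\beta2|x|^2$. We then use $|\xi|\le C\sqrt{U^\phi_\eta+\nu_\eta}\le \nu'_\eta+U^\phi_\eta$, after enlarging the constant and using $\sqrt a\le 1+a$. For $s>0$ we again insert the minorant into $\log\E[e^{\phi(x+W_s)}]$ and compute the Gaussian integral exactly. This gives $-\frac{\beta|x|^2}{2}$ plus a positive multiple of $|\beta x-\xi|^2$, and the latter depends on $s$ only through bounded factors. A Young inequality $|\beta x-\xi|^2\ge\frac12\beta^2|x|^2-|\xi|^2$, combined with the bound on $|\xi|$, should give the terms $\frac{\beta^2|x|^2}{4s}-\frac{(\nu'_\eta+U^\phi_\eta)^2}{\beta^2 s}$. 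Matching the precise constants is bookkeeping, with the factors $(1+\beta s)^{\pm1}\le 1+\beta T$ absorbed into $\nu'_\eta$.

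\textbf{Upper bound \eqref{eq:u_up_bound}.} This is the step I expect to be the main obstacle, because the bound must stay finite as $s\downarrow0$. A crude ratio comparison of $\Nc_s(x-\cdot)$ with $\Nc_{\eta T}$ produces a $\log(s/\eta T)$ blow-up, so I would not use it. Instead I will go in the opposite direction via the semigroup identity $e^{U^\phi_\eta}=\E[e^{u^\phi_s(W_{\eta T-s})}]$ and bound $u^\phi_s$ from below around the point $x$. Lemma~\ref{lem:HC_Moreau}(2), applied with $t=T-s$, gives the semiconvexity
\begin{equation}
u^\phi_s(z)\ \ge\ u^\phi_s(x)+\nabla u^\phi_s(x)\cdot(z-x)-\frac{\kappa}{2}|z-x|^2,\qquad \kappa=\frac{\beta}{1+\beta s}.
\end{equation}
For $s=0$ the same inequality holds by the $\beta$-convexity of $\phi$, with a subgradient in place of $\nabla u^\phi_s(x)$. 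Substituting gives $U^\phi_\eta\ge u^\phi_s(x)+\log\E[\exp(p\cdot(W_r-x)-\frac\kappa2|W_r-x|^2)]$ with $r:=\eta T-s$ and $p$ the slope. I will bound the Gaussian expectation from below uniformly in the unknown slope $p$ by computing it explicitly and minimising over $p$. This produces $-\frac{|x|^2}{2r}$ up to the $\kappa$-correction, plus a $-\frac d2\log$ term in $r/(\eta T)$. All $\beta$-dependent factors are bounded by $(1+\beta T)^{d/2}$ and absorbed into $\nu_\eta$. If the minimisation over $p$ does not directly yield the factor $\log(1-s/\eta T)$, I would fall back on a Jensen/Prékopa-type inequality for the log-concave structure and accept a constant depending only on $(\beta,T,d,\eta)$.
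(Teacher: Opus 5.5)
Your proof of (i) and of the upper bound \eqref{eq:u_up_bound} is correct, and for two of the four estimates you take a genuinely different route from the paper. The lower bound in (ii) has a real problem, explained in the second paragraph.

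\textbf{Comparison.} The upper bound in (i) is the paper's argument: its Young inequality $|x-y|^2\le\frac{|x|^2}{1-\eta}+\frac{|y|^2}{\eta}$ is your pointwise kernel-ratio bound. The paper then obtains everything else from one pointwise estimate, $\phi(x)\le U^\phi_\eta+\nu_\eta+\frac{|x|^2}{2\eta T}$, proved by averaging the $\beta$-convexity inequality over balls $B_R(x)$ and applying Jensen twice. Setting $x=0$ gives $U^\phi_\eta\ge-\nu_\eta$. Convolving with the heat kernel gives \eqref{eq:u_up_bound}, which is where the factor $-\frac d2\log(1-\frac{s}{\eta T})$ comes from. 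Finally, $\max_{B_1}\phi$ controls the subgradient at $0$.

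You instead get the lower bound in (i) from the minorant at $0$. This gives the explicit constant $\nu_\eta=\frac d2\log(1+\beta\eta T)$ and, for free, the slope control $|\xi|^2\le\frac{2(1+\beta\eta T)}{\eta T}(U^\phi_\eta+\nu_\eta)$. You get \eqref{eq:u_up_bound} from Chapman--Kolmogorov plus the semiconvexity of Lemma~\ref{lem:HC_Moreau}(2). That step closes without any fallback: with $r=\eta T-s$,
\[
\E\Big[e^{p\cdot(W_r-x)-\frac\kappa2|W_r-x|^2}\Big]=(1+\kappa r)^{-\frac d2}\exp\Big(\frac{r|p-x/r|^2}{2(1+\kappa r)}-\frac{|x|^2}{2r}\Big)\ge(1+\kappa r)^{-\frac d2}e^{-\frac{|x|^2}{2r}} .
\]
So $\kappa$ enters only through the logarithm, and $u^\phi_s(x)\le U^\phi_\eta+\frac d2\log(1+\beta\eta T)+\frac{|x|^2}{2(\eta T-s)}$. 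Since $-\frac d2\log(1-\frac{s}{\eta T})\ge0$, this is stronger than required, and the same $\nu_\eta$ works in (i).

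Your route gives explicit constants and a bound that does not blow up as $s\uparrow\eta T$. The cost is that it relies on the positive-time semiconvexity lemma, whereas the paper's route uses only $\beta$-convexity at time $0$. One minor fix: to get $|\xi|\le\nu'_\eta+U^\phi_\eta$ with coefficient exactly one on $U^\phi_\eta$, use $C\sqrt a\le a+\frac{C^2}{4}$ rather than $\sqrt a\le 1+a$.

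\textbf{The gap in the lower bound for $s>0$.} For \eqref{eq:u_low_bound} with $s>0$ you use the same mechanism as the paper: minorant, Gaussian integral, Young, slope bound. However, your sketch drops a term and aims at a target that cannot be reached. Inserting the minorant gives exactly
\[
\begin{aligned}
u^\phi_s(x)&\ge \xi\cdot x-\frac\beta2|x|^2+\frac{s|\beta x-\xi|^2}{2(1+\beta s)}-\frac d2\log(1+\beta s)\\
&=-\frac\beta2|x|^2+\frac{\beta^2 s|x|^2}{2(1+\beta s)}+\frac{\xi\cdot x}{1+\beta s}+\frac{s|\xi|^2}{2(1+\beta s)}-\frac d2\log(1+\beta s).
\end{aligned}
\]
You dropped the linear term $\xi\cdot x$. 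Absorbing it by Young against the quadratic gain, which is only of order $s$, is exactly what produces the penalty $\frac{|\xi|^2}{\beta^2 s(1+\beta s)}$. It leaves a gain of $\frac{\beta^2 s}{4(1+\beta s)}|x|^2$, with $s$ in the numerator.

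No bookkeeping can turn this into $\frac{\beta^2|x|^2}{4s}$, because the printed inequality is false. Take $\phi\equiv0=\mathbf{T}_\beta^-[0]$, which is admissible; then $u^\phi_s\equiv0$ and $U^\phi_\eta=0$. The printed \eqref{eq:u_low_bound} would then require $\big(\frac{\beta^2}{4s}-\frac\beta2\big)|x|^2\le\frac{(\nu'_\eta)^2}{\beta^2 s}$ for all $x$, which fails whenever $s<\beta/2$.

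The paper's own proof also produces an $s$-in-the-numerator bound. Its display $-\frac\beta4(2-\beta s)|x|^2$ even slightly overstates what that Young step yields, namely $\big(-\frac\beta2+\frac{\beta^2 s}{4(1+\beta s)}\big)|x|^2$. You should therefore state and prove the corrected inequality
\[
u^\phi_s(x)\ge-\frac\beta2|x|^2+\frac{\beta^2 s}{4(1+\beta s)}|x|^2-\frac{(\nu'_\eta+U^\phi_\eta)^2}{\beta^2 s},
\]
after enlarging $\nu'_\eta$ to absorb $\frac d2\log(1+\beta s)$. This is all that step (3b) of the proof of Theorem~\ref{thm:main} (ii-a) uses. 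At $s=T$, any positive quadratic coefficient gives both the coercivity needed to bound the minimizers and the quadratic lower bound on $\mathbf{T}_\beta^+[u^{\phi_n}_T]$.
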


\begin{proof}
(i) Using the Young inequality $|x-y|^2\le\frac{|x|^2}{1-\eta}+\frac{|y|^2}{\eta}$, we see that
\begin{align}
e^{u_T^\phi(x)}
=
\int \frac{e^{\phi(y)-\frac1{2T}|x-y|^2}}{(2\pi T)^{\frac d2}}\,\d y
\ge
\eta^{\frac d2}e^{-\frac{|x|^2}{2(1-\eta)T}}
\int \frac{e^{\phi(y)-\frac{|y|^2}{2\eta T}}}{(2\pi \eta T)^{\frac d2}}\,\d y
=
\eta^{\frac d2}e^{-\frac{|x|^2}{2(1-\eta)T}}e^{U^\phi_\eta},
\end{align}
which provides the right-hand side inequality in (i).

Next, by the $\beta$--convexity of $\phi$, we obtain the Jensen inequality for all $R>0$:
\begin{equation}
\phi(x)
\le
-\frac12\beta|x|^2
+\frac1{|B_R|}\int_{B_R(x)}\Big(\phi+\frac12\beta|.|^2\Big)(y)\,\d y
\le
\frac12\beta R^2+\frac1{|B_R|}\int_{B_R(x)}\phi(y)\,\d y.
\end{equation}
Applying Jensen's inequality again to the exponential function, we deduce that
\begin{align}
\phi(x)
&\le
\frac12\beta R^2
+\log\Big(\frac1{|B_R|}\int_{B_R(x)}e^{\phi(y)}\,\d y\Big)
\nonumber\\
&\le
\frac12\beta R^2
+\log\Big(\frac1{|B_R|}e^{\frac{|x|^2+R^2}{2\eta T}}
\int_{B_R(x)}e^{\phi(y)-\frac{|y|^2}{2\eta T}}\,\d y\Big)
\nonumber\\
&\le
\frac12\beta R^2
+\log\Big(\frac{(2\pi\eta T)^{\frac d2}}{|B_R|}
e^{\frac{|x|^2+R^2}{2\eta T}}e^{U^\phi_\eta}\Big)
\nonumber\\
&=
U^\phi_\eta+\frac{|x|^2}{2\eta T}+\bar\nu_\eta(R),
~\text{with}~
\bar\nu_\eta(R)
:=
\frac12\Big(\beta+\frac1{\eta T}\Big)R^2
+\log\frac{(2\pi\eta T)^{\frac d2}}{|B_R|}.
\label{phileq1}
\end{align}
As $\phi(0)=0$, this provides the left-hand side inequality in (i) with the finite constant
\begin{equation}
\nu_\eta:=\inf_{R>0}\bar\nu_\eta(R).
\end{equation}

\medskip
(ii) Notice that \eqref{phileq1} provides the required upper bound in
\eqref{eq:u_up_bound} at $s=0$. To obtain the lower bound in \eqref{eq:u_low_bound}
at $s=0$, note that the $\beta$--convexity of $\phi$ implies that
\begin{equation}\label{eq:phi_lb}
\phi(x)+\frac12\beta|x|^2\ge p\cdot x
~\text{for all }p\in\partial\phi(0),
\end{equation}
as $\phi(0)=0$. Therefore $|p|=p\cdot\frac{p}{|p|}
\le
\frac{\beta}{2}+\max_{B_1}\phi
\le
\frac{\beta}{2}+\frac1{2\eta T}+\nu_\eta+U^\phi_\eta
=:\nu'_\eta+U^\phi_\eta,$
by substituting $R=1$ in \eqref{phileq1}. Then $\phi(x)+\frac12\beta|x|^2\ge p\cdot x\ge-(\nu'_\eta+U^\phi_\eta)|x|,$
which is exactly the case $s=0$ in \eqref{eq:u_low_bound}, since $u^\phi_0=\phi$.

By \eqref{eq:phi_lb}, the definition of $u_s^\phi$, and a direct Gaussian calculation,
\begin{align}
e^{u_s^\phi(x)}
=
\int_{\R^d}\frac{e^{\phi(y)-\frac{|x-y|^2}{2s}}}{(2\pi s)^{\frac d2}}\,\d y
\ge
\int_{\R^d}\frac{e^{p\cdot y-\frac{\beta}{2}|y|^2-\frac{|x-y|^2}{2s}}}{(2\pi s)^{\frac d2}}\,\d y
=
\frac{e^{\frac{|\frac{x}{s}+p|^2}{2(\beta+\frac1s)}-\frac{|x|^2}{2s}}}{(1+\beta s)^{\frac d2}}.
\end{align}
Using the Young inequality $\frac{p\cdot x}{1+\beta s}
\ge
-\frac{\lambda}{2}|x|^2-\frac{1}{2\lambda}\frac{|p|^2}{(1+\beta s)^2},$
$\lambda:=\frac{\beta^2 s}{2(1+\beta s)}>0,$ this implies
\begin{equation}
u_s^\phi(x)
\ge
-\frac d2\log(1+\beta s)
-\frac{\beta}{4}(2-\beta s)|x|^2
-\frac{|p|^2}{(1+\beta s)\beta^2s},
\end{equation}
which gives the case $s>0$ in \eqref{eq:u_low_bound}, after increasing
$\nu'_\eta$ if necessary so that $\nu'_\eta+U^\phi_\eta\ge0$, by using
$|p|^2\le(\nu'_\eta+U^\phi_\eta)^2$.
We finally derive \eqref{eq:u_up_bound} by using the right-hand side of \eqref{phileq1}:
\begin{align}
u^\phi_s(x)
=
\log \E\big[e^{\phi(x+W_s)}\big]
&\le
U^\phi_\eta+\nu_\eta
+\log \E\Big[e^{\frac{1}{2\eta T}|x+W_s|^2}\Big]
\\
&=
U^\phi_\eta+\nu_\eta
-\frac d2\log\Big(1-\frac{s}{\eta T}\Big)
+\frac{|x|^2}{2(\eta T-s)},
\qquad s\in[0,\eta T). 
\end{align}
\end{proof}

\medskip
{\it In the rest of this paper,
\begin{equation}\label{assum:existence}
\mbox{we assume}~
\delta:=1-\frac1{\beta T}>0
~\mbox{and we fix}~
\eta\in(0,\delta).
\end{equation}}
\noindent {\bf Proof of Theorem \ref{thm:main} (ii-a)}
By \eqref{dualuparrow}, we may consider a maximizing sequence $(\phi_n)_{n\ge 1}$ for Problem~\eqref{D} satisfying:
\begin{equation}\label{maxsequence}
\phi_n\in\Cc^{\rm conv}_{w,\uparrow},
\phi_n(0)=0,
~\mbox{and}~
\mathfrak{J}(\phi_n)
:=
\mu_T\big(\mathbf{T}^+_\beta[\phi_n]\big)
-
\mu_0\big(\mathbf{T}^+_\beta[u^{\phi_n}_T]\big)
\stackrel{n\to\infty}{\longrightarrow}
\Vc_{\rm red}.
\end{equation}
As $\mathfrak{J}(\phi+{\rm const})=\mathfrak{J}(\phi)$, we may set $\phi_n(0)=0$ for all $n\ge 1$. We proceed in several steps.

\medskip
\noindent {\bf 1.} We first prove that the sequence
\begin{equation}\label{Udeltabdd}
\big(U^{\phi_n}_\eta\big)_{n\ge 1}
\mbox{ is bounded}.
\end{equation}
First, $U^{\phi_n}_\eta\ge-\nu_\eta$ by Lemma~\ref{lem:bounds}~(i). To derive a uniform upper bound, we observe that $\mathbf{T}_\beta^+[\phi_n]\le \phi_n(0)+\frac{\beta}{2}|.|^2=\frac{\beta}{2}|.|^2,$ so that
Lemma~\ref{lem:bounds}~(i) yields $
u_T^{\phi_n}(y)\ge U_\eta^{\phi_n}-\frac{|y|^2}{2(1-\eta)T}+\frac d2\log(\eta).
$ Then
\begin{equation}
\mathbf{T}_\beta^+\big[u_T^{\phi_n}\big](x)
\ge
U_\eta^{\phi_n}+\frac d2\log(\eta)
+
\mathbf{T}_\beta^+\Big[-\frac{|.|^2}{2(1-\eta)T}\Big](x).
\end{equation}
Since $\eta<\delta$, we have $\frac1{(1-\eta)T}<\beta$, and therefore
$\mathbf{T}_\beta^+\Big[-\frac{|.|^2}{2(1-\eta)T}\Big](x)
=
-\frac{\beta}{2\big(\beta T(1-\eta)-1\big)}\,|x|^2.
$
It follows that, for all sufficiently large $n$,
\begin{align}
\Vc_{\rm red}-1
\le
J(\phi_n)
&=
\int \mathbf{T}_\beta^+[\phi_n](x)\,\mu_T(\d x)
-
\int \mathbf{T}_\beta^+\big[u_T^{\phi_n}\big](x)\,\mu_0(\d x)
\\
&\le
\frac{\beta}{2}\int |x|^2\,\mu_T(\d x)
-
U_\eta^{\phi_n}
-
\frac d2\log(\eta)
+
\frac{\beta}{2\big(\beta T(1-\eta)-1\big)}
\int |x|^2\,\mu_0(\d x).
\end{align}
Since $\mu_0,\mu_T$ have finite second moments and $\Vc_{\rm red}<\infty$, this proves \eqref{Udeltabdd}.

\medskip
\noindent {\bf 2.} We next show that, after passing to a subsequence $(n_k)_k$, we may find a limit function:
\begin{equation}\label{phinconv}
\phi_{n_k}
\longrightarrow
\hat\phi \in \Cc^{\rm conv}_w
\quad\mbox{as $k\to\infty$, locally uniformly on } \R^d.
\end{equation}
Indeed, inequalities \eqref{eq:u_low_bound}--\eqref{eq:u_up_bound} of Lemma~\ref{lem:bounds}~(ii), evaluated at $s=0$,
together with \eqref{Udeltabdd}, show that the sequence $(\phi_n)_n$ is locally bounded.
As each $\phi_n$ is $\beta$--convex and locally uniformly bounded, it follows that $(\phi_n)_n$ is locally Lipschitz,
uniformly in $n$; see Rockafellar \cite{Rockafellar}, Theorem~10.6, p.~88.
The convergence in \eqref{phinconv} is then a direct consequence of the Arzel\`a--Ascoli theorem. Notice that $\hat\phi\in\Cc^{\rm conv}_w$ as it inherits the $\beta-$convexity of the maps $\phi_{n_k},$ together with the quadratic bounds in \eqref{eq:u_low_bound}--\eqref{eq:u_up_bound}. 

\medskip
\noindent {\bf 3.} In this step, we provide the proof of existence of a dual potential optimiser for the reduced dual problem $\Vc_{\rm red}$. We rely on the following claim which will be justified later in Steps {\bf 5} to {\bf 7} below:
\begin{equation}\label{Tbeta+phinconv}
\mathbf{T}_\beta^+[\phi_{n_k}]
\longrightarrow
\mathbf{T}_\beta^+[\hat\phi]
\quad\mbox{as $k\to\infty$, locally uniformly on } \R^d.
\end{equation}
\begin{itemize}
\item[(3a)] Since $\phi_n(0)=0$, we have $\mathbf{T}_\beta^+[\phi_n]\le \frac{\beta}{2}|.|^2\in\L^1(\mu_T)$ by Assumption~\ref{assum:existence}. It then follows from \eqref{Tbeta+phinconv} and Fatou's lemma that
\begin{equation}
\limsup_{k\to\infty}\mu_T\big( \mathbf{T}_\beta^+[\phi_{n_k}]\big)
\le
\mu_T\big(\mathbf{T}_\beta^+[\hat\phi]\big).
\end{equation}

\item[(3b)] Using Lemma~\ref{lem:bounds}~(ii), \eqref{eq:u_low_bound}, with $s=T$, and the boundedness of
$\big(U_\eta^{\phi_n}\big)_{n\ge 1}$ proved in Step {\bf 1} that $
u_T^{\phi_{n_k}}+\frac{\beta}{2}|.|^2\ge -c_0+\frac{\beta^2}{4T}|.|^2$ for some constant $c_0$. Then, 
\begin{align}
\mathbf{T}_\beta^+\big[u^{\phi_{n_k}}_T\big](x)
&=
\inf_y\Big\{u^{\phi_{n_k}}_T(y)+\frac{\beta}{2}|x-y|^2\Big\}
\\
&\ge
-c_0+\frac{\beta}{2}|x|^2+\inf_y\Big\{\frac{\beta^2}{4T}|y|^2-\beta x\cdot y\Big\}
=
-c_0+\Big(\frac{\beta}{2}-T\Big)|x|^2,
\end{align}
for all $x\in\R^d$, where the last lower bound is in $\L^1(\mu_0)$ as $\mu_0\in\Pc_2(\R^d)$. We may then apply Fatou's lemma to get
\begin{equation}\label{lsc1}
\liminf_{k\to\infty}\mu_0\big(\mathbf{T}_\beta^+\big[u^{\phi_{n_k}}_T\big]\big)
\ge
\mu_0\big(L\big),
~~
L:=\liminf_{k\to\infty}\mathbf{T}_\beta^+\big[u^{\phi_{n_k}}_T\big].
\end{equation}
After passing to a subsequence $n_{k'}=n_{k'}^x$ for fixed $x\in\R^d$, we have
\begin{align}
L(x)
=
\lim_{k'\to\infty}\mathbf{T}_\beta^+\big[u^{\phi_{n_{k'}}}_T\big](x)
&\ge
\limsup_{k'\to\infty}
\Big(
\frac{\beta}{2}|x|^2
-\beta x\cdot y_{n_{k'}}
-c_0
+\frac{\beta^2}{4T}|y_{n_{k'}}|^2
\Big),
\end{align}
with $y_{n_{k'}}$ a minimizer of $\mathbf{T}_\beta^+\big[u^{\phi_{n_{k'}}}_T\big](x)$. Then if $L(x)<\infty$, this shows that the sequence $(y_{n_{k'}})_{k'}$ is bounded; after possibly passing to a further subsequence, we may therefore assume that $y_{n_{k'}}\to\hat y\in\R^d$. It follows that
\begin{align}
L(x)
&=
\lim_{k'\to\infty}
\Big(
u^{\phi_{n_{k'}}}_T(y_{n_{k'}})+\frac{\beta}{2}|x-y_{n_{k'}}|^2
\Big)
\\
&\ge
\liminf_{k'\to\infty}u^{\phi_{n_{k'}}}_T(y_{n_{k'}})
+\frac{\beta}{2}|x-\hat y|^2
\\
&=
\log\Big(
\liminf_{k'\to\infty}
\E\big[e^{\phi_{n_{k'}}(y_{n_{k'}}+W_T)}\big]
\Big)
+\frac{\beta}{2}|x-\hat y|^2
\\
&\ge
u^{\hat\phi}_T(\hat y)+\frac{\beta}{2}|x-\hat y|^2
\ge
\inf_{y\in\R^d}
\Big\{
u^{\hat\phi}_T(y)+\frac{\beta}{2}|x-y|^2
\Big\}
=
\mathbf{T}_\beta^+\big[u^{\hat\phi}_T\big](x),
\end{align}
by Fatou's lemma and the local uniform convergence of $\phi_{n_{k'}}$ to $\hat\phi$, where $u_T^{\hat\phi}(y)\in(-\infty,\infty]$ is understood in the extended sense. The same inequality $
L(x)\ge \mathbf{T}_\beta^+\big[u^{\hat\phi}_T\big](x)
$ is trivially true if $L(x)=\infty$. Plugging this into \eqref{lsc1}, we obtain
\begin{equation}\label{lsc}
\liminf_{k\to\infty}
\mu_0\big(\mathbf{T}_\beta^+\big[u^{\phi_{n_k}}_T\big]\big)
\ge
\mu_0\big(\mathbf{T}_\beta^+\big[u^{\hat\phi}_T\big]\big).
\end{equation}
\item[(3c)] Set $
A_k:=\int \mathbf{T}_\beta^+[\phi_{n_k}]\,\d\mu_T$ and $B_k:=\int \mathbf{T}_\beta^+\big[u_T^{\phi_{n_k}}\big]\,\d\mu_0,$ $k\ge 1$.
By {\rm (3a)}, the sequence $(A_k)_k$ is bounded above, and by \eqref{maxsequence}, we see that $A_k-B_k=\mathfrak{J}(\phi_{n_k})\longrightarrow \Vc_{\rm red}$. Hence $(B_k)_k$ is bounded above. Together with \eqref{lsc}, this implies
\begin{equation}\label{hatpsiintegrability}
\mu_0\big(\mathbf{T}_\beta^+\big[u_T^{\hat\phi}\big]\big)<\infty,
\end{equation}
so that $\mathfrak{J}(\hat\phi)$ is well defined. Combining the conclusion from {\rm (3a)} with \eqref{lsc}, we now obtain
\begin{align}
\Vc_{\rm red}
=
\lim_{k\to\infty}\mathfrak{J}(\phi_{n_k})
&\le
\limsup_{k\to\infty}\int \mathbf{T}_\beta^+[\phi_{n_k}]\,\d\mu_T
-
\liminf_{k\to\infty}\int \mathbf{T}_\beta^+\big[u_T^{\phi_{n_k}}\big]\,\d\mu_0
\\
&\le
\int \mathbf{T}_\beta^+[\hat\phi]\,\d\mu_T
-
\int \mathbf{T}_\beta^+\big[u_T^{\hat\phi}\big]\,\d\mu_0
=
\mathfrak{J}(\hat\phi),
\end{align}
which shows that $\Vc_{\rm red}=\mathfrak{J}(\hat\phi)$, as required. 
\end{itemize}

\medskip
\noindent{\bf 4.} We now prove that the induced potential $\hat\psi:=\Tc_\beta^+[\hat\phi]$ attains the supremum in the relaxed dual problem $\bar\Vc$.

\smallskip
\noindent 4(a).
Fix $(t,x)\in[0,T)\times\R^d$ and let $\P\in \Pc_t(\delta_x)$ satisfy
$\E^\P\int_t^T c(\gamma_r^\P)\ \d r<\infty$. 
Since $\P\circ X_t^{-1}=\delta_x$, we have $X_t=x$, $\P$-a.s. Hence, after subtracting at time
$t$ and defining
\begin{equation}
W_s:=W_s^\P-W_t^\P,\qquad \alpha_s:=\alpha_s^\P,\qquad \sigma_s:=\sigma_s^\P,\qquad s\in[t,T],
\end{equation}
we obtain
$X_s=x+\int_t^s\alpha_r\ dr+\int_t^s\sigma_r\ \d W_r,
\ t\le s\le T,\  \P$-a.s., where $W=(W_s)_{s\in[t,T]}$ is a $d$-dimensional $\P$-Brownian motion on $[t,T]$.

Fix $y\in\R^d$ and define the auxiliary process
$Y_s^y:=y+\int_t^s\alpha_r\ \d r+ W_s,\  s\in[t,T]$.
Then
\begin{equation}
\E^\P|Y_T^y|^2
\le
3|y|^2+3(T-t)\E^\P\int_t^T |\alpha_r|^2dr+3d(T-t)<\infty.
\end{equation}
Since $\hat\phi$ is finite and $\beta$--convex and $\hat\phi(0)=0$, we have for all $p\in\partial\hat\phi(0)$:
\begin{equation}
\hat\phi(z)\ge p\cdot z-\frac\beta2|z|^2
\ge -|p|\ |z|-\frac\beta2|z|^2,\  z\in\R^d.
\end{equation}
Since $\E^\P|Y_T^y|^2<\infty$, this implies
$\hat\phi(Y_T^y)^-\in\L_1(\P)$. Let $\Omega_{t,T}:=C([t,T];\R^d)$, let $R^{t,y}$ be Wiener measure on $\Omega_{t,T}$
started from $y$ at time $t$, and let
$\Q^y:=\Law_\P(Y^y)\in\Pc(\Omega_{t,T})$.
We claim that
\begin{equation}\label{entropylessthan}
H(\Q^y\ |\ R^{t,y})\le \frac12\E^\P\int_t^T |\alpha_r|^2 \d r.
\end{equation}
For this, let $n\ge1$ and define
$\tau_n:=\inf\Big\{s\ge t:\int_t^s |\alpha_r|^2 \d r\ge n\Big\}\wedge T,
\ \alpha_r^{(n)}:=\alpha_r\ 1_{\{r\le\tau_n\}}$,
and
$Y_s^{(n),y}:=y+\int_t^s\alpha_r^{(n)}\d r+W_s,\ s\in[t,T]$.
Set
$Z_n:=e^{
-\int_t^T \alpha_r^{(n)}\cdot \d W_r
-\frac12\int_t^T |\alpha_r^{(n)}|^2 \d r}$.
Since $\int_t^T |\alpha_r^{(n)}|^2 \d r\le n$, Novikov's criterion holds, so $Z_n$ is a true
martingale and
$\d\widetilde \P_n:=Z_n\ \d\P$
defines a probability measure. Under $\widetilde \P_n$, the process
$\widetilde W^{(n)}:=W+\int_t^. \alpha_r^{(n)} \d r$
is a Brownian motion on $[t,T]$.
Therefore, if $\Q_n:=\Law_\P(Y^{(n),y})$, then
$\Law_{\widetilde \P_n}(Y^{(n),y})=R^{t,y}$.
By contraction of relative entropy under the measurable map $\omega\mapsto Y^{(n),y}(\omega)$,
\begin{equation}
H(\Q_n\ |\ R^{t,y})
\le
H(\P\ |\ \widetilde \P_n)
=
\E^\P\!\left[\log\frac{d\P}{d\widetilde \P_n}\right]
=
\E^\P[-\log Z_n]
=
\frac12\E^\P\int_t^T |\alpha_r^{(n)}|^2 \d r,
\end{equation}
because $\E^\P[\int_t^T \alpha_r^{(n)}\cdot \d W_r]=0$. Hence
$H(\Q_n\ |\ R^{t,y})\le \frac12\E^\P\int_t^T |\alpha_r^{(n)}|^2 \d r$.

Also,
$\sup_{s\in[t,T]}|Y_s^{(n),y}-Y_s^y|
\le \int_{\tau_n}^T |\alpha_r| \d r$,
thus 
$\E^\P\Big[\sup_{s\in[t,T]}|Y_s^{(n),y}-Y_s^y|^2\Big]
\le
(T-t)\E^\P\int_{\tau_n}^T |\alpha_r|^2 \d r\longrightarrow 0$.
Thus $\Q_n\Rightarrow \Q^y$ weakly on $\Omega_{t,T}$. By lower semicontinuity of relative
entropy,
$H(\Q^y\ |\ R^{t,y})
\le \liminf_{n\to\infty} H(\Q_n\ |\ R^{t,y})
\le \frac12\E^\P\int_t^T |\alpha_r|^2 \d r$.

For $m\ge1$, define
$f_m(\omega):=\hat\phi(\omega_T)\wedge m,\  \omega\in\Omega_{t,T}$.
Then $f_m^+\le m$ and
$f_m^-= \hat\phi(\omega_T)^-$,
so $f_m\in \L^1(\Q^y)$. Moreover,
$e^{f_m(\omega)}\le e^{\hat\phi(\omega_T)},\  \omega\in\Omega_{t,T}$,
hence
\begin{equation}
\E^{R^{t,y}}[e^{f_m}]
\le \E^{R^{t,y}}[e^{\hat\phi(\omega_T)}]
=
(\Nc_{T-t}*e^{\hat\phi})(y)
=
e^{u_{T-t}^{\hat\phi}(y)}
<\infty.
\end{equation}
Applying the Donsker--Varadhan variational formula to $f_m$ under $R^{t,y}$, we see that 
$$
u_{T-t}^{\hat\phi}(y)
\ge
\E^{\Q^y}[f_m]-H(\Q^y| R^{t,y})
\uparrow
\E^{\Q^y}[\hat\phi(Y^y_T)]-H(\Q^y| R^{t,y})
\ge
\E^{\Q^y}[\hat\phi(Y^y_T)]
-\frac12\E^\P\!\!\int_t^T\!\! |\alpha_r|^2\d r,
$$
by monotone convergence and \eqref{entropylessthan}. Next, we have
$\hat\psi(X_T)\le \hat\phi(Y_T^y)+\frac\beta2|X_T-Y_T^y|^2,
\ \P$-a.s.
As $\E^\P|X_T-Y_T^y|^2
= |x-y|^2+\E^\P\int_t^T |\sigma_r-I_d|^2 \d r$,
we get
\begin{align}
\E^\P\Big[\hat\psi(X_T)-\!\!\int_t^T\!\!\!\!c(\gamma_r) \d r\Big]
&\le
\E^\P[\hat\phi(Y_T^y)]-\frac12\E^\P\!\!\int_t^T\!\!\!\! |\alpha_r|^2\d r
+\frac\beta2\Big(\E^\P|X_T\!-\!Y_T^y|^2\!-\!\E^\P\!\!\int_t^T \!\!\!\!|\sigma_r\!-\!I_d|^2\d r\Big) 
\\
&\le
u_{T-t}^{\hat\phi}(y)+\frac\beta2|x-y|^2.
\end{align}
By the arbitrariness of $y\in\R^d$ and $\P\in \Pc_t(\delta_x)$, this shows that
\begin{equation}\label{eq:V-upper-final-step4}
V_t^{\hat\psi}(x)\le \hat v_t(x),
\end{equation}
which proves the claim.

\smallskip
\noindent 4(b). Since $\hat\phi$ is finite and $\beta$--convex, the map
$\hat\psi=\Tc_\beta^+[\hat\phi]$ is finite and $\beta$--concave. Moreover,
$\mathbf T_\beta^-[\hat\psi]
= \mathbf T_\beta^-\mathbf T_\beta^+[\hat\phi]
= \hat\phi$,
and by \eqref{hatpsiintegrability},
$\mu_0\Big(\mathbf T_\beta^+
\big[u_T^{\mathbf T_\beta^-[\hat\psi]}\big]\Big)
= \mu_0\big(\mathbf T_\beta^+[u_T^{\hat\phi}]\big)<\infty$.
Thus $\hat\psi\in\bar\Cc^{\conc}\subset\Cc^{\conc}$. Moreover, \eqref{eq:V-upper-final-step4} implies
$V_0^{\hat\psi}\le \hat v_0$, and therefore
$\mu_0(V_0^{\hat\psi})\le \mu_0(\hat v_0)<\infty$.
We may apply Lemma~\ref{lem:reduce0} together with \eqref{eq:V-upper-final-step4} to obtain
\begin{equation}\label{eq:hatpsi-final-step4}
\Vc(\mu_0,\mu_T)\ge \mu_T(\hat\psi)-\mu_0(V_0^{\hat\psi})
\ge \mu_T(\hat\psi)-\mu_0(\hat v_0)
= \mathfrak J(\hat\phi)
=\Vc_{\rm red}(\mu_0,\mu_T),
\end{equation}
by Step 3.
As $\Vc_{\rm red}(\mu_0,\mu_T)=\Vc(\mu_0,\mu_T)$ by Theorem~\ref{thm:main} (i), we get
$\mu_T(\hat\psi)-\mu_0(\hat v_0)
= \mathfrak J(\hat\phi)
= \Vc_{\rm red}(\mu_0,\mu_T)
= \Vc(\mu_0,\mu_T)$.
Combining this with \eqref{eq:hatpsi-final-step4}, we obtain
\begin{equation}
\mu_T(\hat\psi)-\mu_0(V_0^{\hat\psi})=\Vc(\mu_0,\mu_T),
\qquad
\mu_0(V_0^{\hat\psi})=\mu_0(\hat v_0),
\end{equation}
and then $V_0^{\hat\psi}=\hat v_0$, $\mu_0$--a.s. by \eqref{eq:V-upper-final-step4}, proving that $\hat\psi$ attains the supremum in $\bar\Vc(\mu_0,\mu_T)$.

\medskip
\noindent {\bf 5.} In order to prove the claimed convergence of $\mathbf{T}^+_\beta[\phi_{n_k}]$ in \eqref{Tbeta+phinconv}, fix $R>0$ and $s\in(0,\eta T)$,
where $\eta\in(0,\delta)$ is the parameter fixed in Step {\bf 1}. Then
\begin{align}
\big\|\mathbf{T}^+_\beta[\phi_{n_k}]-\mathbf{T}^+_\beta[\hat\phi]\big\|_{\L^\infty(B_R)}
&\le
\big\|\mathbf{T}^+_\beta[\phi_{n_k}]-\mathbf{T}^+_\beta[u_s^{\phi_{n_k}}]\big\|_{\L^\infty(B_R)}
\\
&\quad
+\big\|\mathbf{T}^+_\beta[u_s^{\phi_{n_k}}]-\mathbf{T}^+_\beta[u_s^{\hat\phi}]\big\|_{\L^\infty(B_R)}
\\
&\quad
+\big\|\mathbf{T}^+_\beta[u_s^{\hat\phi}]-\mathbf{T}^+_\beta[\hat\phi]\big\|_{\L^\infty(B_R)}.
\end{align}
We shall prove in Step {\bf 5} below that, for every fixed $s\in(0,\eta T)$,
\begin{equation}\label{usphiconv}
u^{\phi_{n_k}}_s \longrightarrow u^{\hat\phi}_s
\quad\mbox{and}\quad
\mathbf{T}^+_\beta[u^{\phi_{n_k}}_s]\longrightarrow \mathbf{T}^+_\beta[u^{\hat\phi}_s],
~~\mbox{locally uniformly in }\R^d.
\end{equation}
Hence, for every fixed $R>0$ and $s\in(0,\eta T)$,
\begin{equation}\label{task-1}
\limsup_{k\to\infty}
\big\|\mathbf{T}^+_\beta[\phi_{n_k}]-\mathbf{T}^+_\beta[\hat\phi]\big\|_{\L^\infty(B_R)}
\le
\big\|\mathbf{T}^+_\beta[u_s^{\hat\phi}]-\mathbf{T}^+_\beta[\hat\phi]\big\|_{\L^\infty(B_R)}
+
\theta_s^R,
\end{equation}
where
\begin{equation}\label{theta}
\theta_s^R
:=
\sup_{n\ge 1}
\big\|\mathbf{T}^+_\beta[u_s^{\phi_n}]-\mathbf{T}^+_\beta[\phi_n]\big\|_{\L^\infty(B_R)}.
\end{equation}
In Step {\bf 6} we show that
\begin{equation}
\theta_s^R\longrightarrow 0
~~\mbox{as }s\searrow 0,
\end{equation}
and the same argument, applied to $\hat\phi$, yields
\begin{equation}
\big\|\mathbf{T}^+_\beta[u_s^{\hat\phi}]-\mathbf{T}^+_\beta[\hat\phi]\big\|_{\L^\infty(B_R)}
\longrightarrow 0
~~\mbox{as }s\searrow 0.
\end{equation}
Together with \eqref{task-1}, this proves \eqref{Tbeta+phinconv}.

\medskip
\noindent {\bf 6.} In this step we justify \eqref{usphiconv}. Fix $s\in(0,\eta T)$ with $\eta\in(0,\delta)$ as in Step {\bf 1}, and set
\begin{equation}
\Phi:=\{\hat\phi,\phi_n,\ n\ge1\}.
\end{equation}
By Lemma~\ref{lem:bounds}~(ii), \eqref{eq:u_up_bound}, at time $0$, together with the boundedness of
$\big(U_\eta^{\phi_n}\big)_n$ from Step {\bf 1}, there exists a constant $C_\eta<\infty$ such that
\begin{equation}\label{eq:common-upper-bound}
\phi_n\le C_\eta+\frac{|.|^2}{2\eta T},~n\ge1,
~\mbox{and therefore}~
\widehat\phi\le C_\eta+\frac{|.|^2}{2\eta T},
\end{equation}
by \eqref{phinconv}.

In particular, as $s<\eta T$, we have $u_s^\phi(x)=\log\big(\Nc_s*e^\phi(x)\big)\in\R$ for all $x\in\R^d,\ \phi\in\Phi$. 
We split the remaining arguments into three parts.
\begin{itemize}
\item[(5a)] For $\phi\in\Phi$ and $R\ge1$, define
\begin{equation}
\zeta^\phi_R(s,x)
:=
\frac{\Nc_s*(e^\phi\1_{B_R^c})(x)}
     {\Nc_s*(e^\phi\1_{B_R})(x)}.
\end{equation}
Then there exist constants $C_s,a_s>0$, depending only on $(\beta,T,d,\eta,s)$, such that
\begin{equation}\label{eq:tail-ratio}
\sup_{\phi\in\Phi}\zeta^\phi_R(s,x)
\le
C_s\exp\!\Big(
-\frac14\Big(\frac1s-\frac1{\eta T}\Big)R^2+a_s|x|^2
\Big),
\qquad x\in\R^d,\ R\ge1.
\end{equation}
Indeed, $
\phi(y)\le C_\eta+\frac{|y|^2}{2\eta T}$ for all $\phi\in\Phi$ by \eqref{eq:common-upper-bound}, and using $
\frac{|x-y|^2}{2s}
=
\frac{|y|^2}{2\eta T}
+\frac{|x-y|^2}{2s}
-\frac{|y|^2}{2\eta T}
\ge
\frac{|y|^2}{2\eta T}
+\frac14\Big(\frac1s-\frac1{\eta T}\Big)|y|^2-a_s|x|^2,
$
for a suitable constant $a_s>0$, we obtain
\begin{align}
\Nc_s\!*\!(e^\phi\1_{B_R^c})(x)
\le
e^{C_\eta}
\int_{B_R^c}\!\!\!\frac{e^{\frac{|y|^2}{2\eta T}-\frac{|x-y|^2}{2s}}}{(2\pi s)^{\frac d2}}\,\d y
\le
C_s
e^{
-\frac14\Big(\frac1s-\frac1{\eta T}\Big)R^2+a_s|x|^2}.
\label{ineqeta1}
\end{align}
On the other hand, since $\Phi$ is locally bounded below on $B_1$, there exists $m_1<\infty$ such that $
\phi(y)\ge -m_1$ for all $y\in B_1,\ \phi\in\Phi.
$ Hence, for $R\ge1$ and every $\phi\in\Phi$,
\begin{equation}
\Nc_s*(e^\phi\1_{B_R})(x)
\ge
\Nc_s*(e^\phi\1_{B_1})(x)
\ge
(2\pi s)^{-\frac d2}e^{-m_1}\int_{B_1}e^{-\frac{|x-y|^2}{2s}}\,\d y
\ge
C_s'e^{-a_s'|x|^2},
\end{equation}
for some constants $C_s'>0$ and $a_s'\ge0$, independent of $\phi$ and $R$. Combining this with \eqref{ineqeta1}
yields \eqref{eq:tail-ratio}.

\item[(5b)] Fix $R>0$, and set
\begin{equation}
\gamma_{k,R}:=\|\phi_{n_k}-\hat\phi\|_{\L^\infty(B_R)}\longrightarrow 0
\qquad\mbox{as }k\to\infty
\end{equation}
by \eqref{phinconv}. For every fixed $x\in\R^d$, the estimate in (5a) implies that
\begin{equation}
\sup_{\phi\in\Phi}\zeta^\phi_R(s,x)\longrightarrow 0
\qquad\mbox{as }R\to\infty.
\end{equation}
We now write
\begin{align}
\Nc_s*e^{\phi_{n_k}}(x)
&\ge
e^{-\gamma_{k,R}}\Nc_s*(\1_{B_R}e^{\hat\phi})(x)
=
\frac{e^{-\gamma_{k,R}}}{1+\zeta^{\hat\phi}_R(s,x)}
\,\Nc_s*e^{\hat\phi}(x),
\\
\Nc_s*e^{\phi_{n_k}}(x)
&\le
\big(1+\zeta^{\phi_{n_k}}_R(s,x)\big)\Nc_s*(\1_{B_R}e^{\phi_{n_k}})(x)
\le
\big(1+\zeta^{\phi_{n_k}}_R(s,x)\big)e^{\gamma_{k,R}}\Nc_s*e^{\hat\phi}(x).
\end{align}
Sending $k\to\infty$ and then $R\to\infty$, we see that $u^{\phi_{n_k}}_s(x)\longrightarrow u^{\hat\phi}_s(x)$, for all $x\in\R^d$. 
Moreover, the positive-time semiconvexity estimate from Section~5 implies that, for this fixed $s$, the maps $u_s^{\phi_{n_k}}\!+\!\frac{\kappa(s)}{2}|.|^2$ are convex on $\R^d$ for all $k$, with $\kappa(s):=\frac{\beta}{1+\beta s}$. Since the limit is finite everywhere, Rockafellar \cite{Rockafellar}, Theorem~10.8, p.~90,
implies
\begin{equation}
u^{\phi_{n_k}}_s\longrightarrow u^{\hat\phi}_s
\qquad\mbox{locally uniformly in }\R^d.
\end{equation}

\item[(5c)] We next prove the convergence of the Moreau envelopes. For every $\phi\in\Phi$, define
\begin{equation}
G^\phi_s(y):=u_s^\phi(y)+\frac{\beta}{2}|x-y|^2.
\end{equation}
As $\Phi$ is locally bounded from below on $B_1$, the same argument as in (5a) gives a quadratic lower bound $
u_s^\phi\ge -c_s-a_s|.|^2$, $\phi\in\Phi$,
for some constants $c_s,a_s>0$ depending only on $(\beta,T,d,\eta,s)$. Hence, after possibly enlarging constants,
\begin{equation}\label{eq:coercive-step5}
u_s^\phi(y)+\frac{\beta}{2}|x-y|^2
\ge
-C_{s,r}+c_{s,r}|y|^2,
\qquad x\in B_r,\ \phi\in\Phi,
\end{equation}
for some constants $C_{s,r}>0$ and $c_{s,r}>0$.
In particular, for fixed $(s,r)$, the right-hand side tends to $+\infty$ as $|y|\to\infty$, uniformly in $x\in B_r$ and $\phi\in\Phi$.

On the other hand, by \eqref{eq:common-upper-bound} and $s<\eta T$, there exists $C_{s,r}'<\infty$ such that $
u_s^\phi(0)\le C_{s,r}'$ for all $\phi\in\Phi$. Therefore,
\begin{equation}
\mathbf{T}^+_\beta[u_s^\phi](x)
\le
u_s^\phi(0)+\frac{\beta}{2}|x|^2
\le
C_{s,r}' + \frac{\beta}{2}r^2,
\qquad x\in B_r,\ \phi\in\Phi.
\end{equation}
Combining this with \eqref{eq:coercive-step5}, we deduce that every minimiser $y$ of $
u_s^\phi(\cdot)+\frac{\beta}{2}|x-\cdot|^2$ for $x\in B_r$ and $\phi\in\Phi$ satisfies $|y|\le R_{s,r}$ for some $R_{s,r}<\infty$ independent of $\phi$ and $x$.
Thus
\begin{equation}
\mathbf{T}^+_\beta[u_s^\phi](x)
=
\inf_{|y|\le R_{s,r}}
\Big\{
u_s^\phi(y)+\frac{\beta}{2}|x-y|^2
\Big\},
\qquad x\in B_r,\ \phi\in\Phi.
\end{equation}
The local uniform convergence of $u^{\phi_{n_k}}_s$ established in {\rm (5b)} then implies
\begin{equation}
\mathbf{T}^+_\beta[u^{\phi_{n_k}}_s]
\longrightarrow
\mathbf{T}^+_\beta[u^{\hat\phi}_s],
~~\mbox{locally uniformly in }\R^d,
\end{equation}
which proves \eqref{usphiconv}.
\end{itemize}

\medskip
\noindent {\bf 7.} It remains to prove that, for the family $\Phi:=\{\phi_n,\ n\ge 1\}$, we have
\begin{equation}\label{theta-seq}
\theta_s^R
:=
\sup_{n\ge1}
\big\|\mathbf{T}^+_\beta[u_s^{\phi_n}]-\mathbf{T}^+_\beta[\phi_n]\big\|_{\L^\infty(B_R)}
\longrightarrow 0
\qquad\mbox{as }s\searrow 0,
\quad\mbox{for all }R>0.
\end{equation}
Fix $R>0$ and $\rho\in(0,1)$. Since $\Phi$ is locally bounded and $\beta$--convex, it is locally equi-Lipschitz, so there exists
a common Lipschitz constant $L=L_{R,\rho}$ on $B_{R+\rho}$. Denoting $\delta_s(\rho):=\P(|W_s|\ge \rho)$, we see that
\begin{equation}\label{step4ineq1}
e^{u^\phi_s(x)}
=
\Nc_s*e^\phi(x)
\ge
\Nc_s*(e^\phi\1_{B_\rho(x)})(x)
\ge
e^{\phi(x)-L\rho}\big(1-\delta_s(\rho)\big),
~
\phi\in\Phi,\ x\in B_R.
\end{equation}
On the other hand,
\begin{equation}
e^{u^\phi_s(x)}
\le
\Nc_s*(e^\phi\1_{B_\rho(x)})(x)
+
\Nc_s*(e^\phi\1_{B_\rho(x)^c})(x).
\end{equation}
The first term is bounded above by $
\Nc_s*(e^\phi\1_{B_\rho(x)})(x)\le e^{\phi(x)+L\rho}.$ For the second term, by the same Gaussian-tail argument as in Step {\bf 5}(a), using the fixed parameter $\eta$ from Step {\bf 1},
there exists a deterministic function $g_{R,\rho}(s)$, defined for $0<s<\eta T$, such that
\begin{equation}
g_{R,\rho}(s)\longrightarrow 0
~\mbox{as }s\searrow 0,
~\mbox{and}~\Nc_s*(e^\phi\1_{B_\rho(x)^c})(x)\le g_{R,\rho}(s),
~\phi\in\Phi,\ x\in B_R.
\end{equation}
Hence
\begin{equation}
e^{u^\phi_s(x)}
\le
e^{\phi(x)+L\rho}+g_{R,\rho}(s),
\qquad
\phi\in\Phi,\ x\in B_R.
\end{equation}
Together with \eqref{step4ineq1}, this yields
\begin{equation}
-L\rho+\log\big(1-\delta_s(\rho)\big)
\le
u^\phi_s(x)-\phi(x)
\le
\log\big(e^{\phi(x)+L\rho}+g_{R,\rho}(s)\big)-\phi(x)
\le
L\rho + C_{R,\rho} g_{R,\rho}(s),
\end{equation}
for some constant $C_{R,\rho}>0$, by the Lipschitz property of $\log$ on a bounded interval away from the origin,
using the local uniform boundedness of $\Phi$ on $B_R$. Since both $\delta_s(\rho)\to 0$ and $g_{R,\rho}(s)\to 0$ as $s\searrow 0$,
we obtain
\begin{equation}\label{theta0}
\limsup_{s\searrow 0}
\sup_{\phi\in\Phi}
\|u^\phi_s-\phi\|_{\L^\infty(B_R)}
\le
L\rho
\underset{\rho\to 0}{\longrightarrow} 0
~\mbox{for all }R>0.
\end{equation}
We next localize the minimizers in the definition of $\mathbf{T}_\beta^+[u_s^\phi]$. By the same lower-bound argument as in Step {\bf 5}(c),
for every $R>0$ there exists $R'>0$ such that, for all sufficiently small $s\in(0,\eta T)$, all $\phi\in\Phi$, and all $x\in B_R$,
every minimizer in the definition of $\mathbf{T}_\beta^+[u_s^\phi](x)$ belongs to $B_{R'}$. Denoting by $y_s^\phi(x)\in B_{R'}$ such a minimizer, we have
\begin{align}
\mathbf{T}_\beta^+[u_s^\phi](x)
&=
u_s^\phi\big(y_s^\phi(x)\big)+\frac{\beta}{2}\big|x-y_s^\phi(x)\big|^2
\\
&\le
\phi\big(y_s^\phi(x)\big)+\frac{\beta}{2}\big|x-y_s^\phi(x)\big|^2
+\|u_s^\phi-\phi\|_{\L^\infty(B_{R'})}
\\
&\le
\mathbf{T}_\beta^+[\phi](x)+\|u_s^\phi-\phi\|_{\L^\infty(B_{R'})}.
\end{align}
On the other hand, by Jensen's inequality and the $\beta$--convexity of $\phi$, we have $
u_s^\phi\ge \phi-\frac{\beta d}{2}s$, and therefore $
\mathbf{T}_\beta^+[u_s^\phi](x)\ge \mathbf{T}_\beta^+[\phi](x)-\frac{\beta d}{2}s.$
Combining the last two estimates, we obtain
\begin{equation}
\big|\mathbf{T}_\beta^+[u_s^\phi](x)-\mathbf{T}_\beta^+[\phi](x)\big|
\le
\|u_s^\phi-\phi\|_{\L^\infty(B_{R'})}+\frac{\beta d}{2}s,
~~ \phi\in\Phi,\ x\in B_R,
\end{equation}
for all sufficiently small $s\in(0,\eta T)$. Taking the supremum over $\phi\in\Phi$ and $x\in B_R$, and using \eqref{theta0} on $B_{R'}$,
we obtain \eqref{theta-seq}.
\ep

\section{Primal Attainment} \label{sec:primal} 

To prove existence of a primal optimiser $\widehat\P$ for the problem SBB$(\mu_0,\mu_T)$ we need the following additional regularity properties of the maps $\hat u,\hat v$. and the corresponding $\msY$.

\begin{lemma} \label{lem:primalattainment}
{\rm (i)} For $\mu_0$-a.e. $x$, the minimum in $\mathbf{T}_\beta^+[\hat u_T](x)$ is attained by a unique minimizer $\msY_0(x)$. Moreover, $\msY_0$ is Lipschitz on this full $\mu_0$-measure set; we fix a Lipschitz Borel extension, still denoted $\msY_0$. 
\\
{\rm (ii)} The minimum value $\hat\psi=\mathbf{T}_\beta^+[\hat\phi]$ is attained by a non-empty compact set of minimizers $\msY_T$.
\\
{\rm (iii)} For all $t<T$, we have $|\frac{\hat v}{w}|_{\L^\infty[0,t]}+|\frac{\nabla \hat u}{\sqrt{w}}|_{\L^\infty[t,T]}<\infty$.
\end{lemma}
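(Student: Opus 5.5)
Throughout, $\hat u_s:=u^{\hat\phi}_s$, so that $\hat u_T=u^{\hat\phi}_T$ and $\hat v_t=\mathbf T_\beta^+[\hat u_{T-t}]$. The three assertions concern the single optimiser $\hat\phi\in\Cc^{\rm conv}_w$ from Theorem~\ref{thm:main} (ii-a). We normalise $\hat\phi(0)=0$ and keep the bounds of Lemma~\ref{lem:bounds}, which pass to the limit $\hat\phi$ by the local uniform convergence \eqref{phinconv}. These are the quadratic upper bound \eqref{eq:common-upper-bound} with constant $\frac1{2\eta T}<\frac{\beta}{2}\cdot\frac{1}{\beta\eta T}$, and the lower bound $\hat\phi\ge p\cdot x-\frac\beta2|x|^2$.

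\textbf{(ii).} The map $y\mapsto\hat\phi(y)+\frac\beta2|x-y|^2$ is lower semicontinuous (indeed continuous), so we only need coercivity. The upper bound alone does not give it. Instead, the infimum is finite: it is bounded above by $\hat\phi(x)$, and bounded below on the set where the value is at most $\hat\phi(x)$. That sublevel set is closed, and we will show it is bounded. Convexity of $\hat\phi+\frac\beta2|\cdot|^2$ gives $\hat\phi(y)+\frac\beta2|x-y|^2\ge p\cdot y+\frac\beta2|x|^2-\beta x\cdot y$. This is only affine in $y$, so we need a sharper estimate.

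We use that $\mathbf T_\beta^+[\hat\phi]=\hat\psi$ is finite and $\mu_T$-integrable. If the minimisers escaped to infinity, the envelope would be attained only asymptotically. We then exploit the Moreau biconjugation \eqref{TbetaTbeta}: writing $\hat\phi=\mathbf T_\beta^-[\hat\psi]$ and using that $\hat\psi$ is finite and $\beta$-concave, the argmin set at $x$ coincides with the $\beta$-superdifferential set $\{y: x-y\in\frac1\beta\partial^+(\ldots)\}$. Concretely, $y$ is a minimiser iff $\beta(x-y)\in\partial(\frac\beta2|\cdot|^2-\hat\psi)(x)$. The function $\frac\beta2|\cdot|^2-\hat\psi$ is a finite convex function, so this subdifferential is nonempty and compact. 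This gives (ii).

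\textbf{(i).} We use the same subdifferential characterisation with $\hat u_T$ in place of $\hat\phi$. By Lemma~\ref{lem:HC_Moreau} (2) applied at $t=0$ (valid since $\hat\psi\in\bar\Cc^{\rm conc}$), $\hat u_T+\frac{\kappa(0)}2|\cdot|^2$ is convex with $\kappa(0)=\beta/(1+\beta T)<\beta$. Hence $y\mapsto\hat u_T(y)+\frac\beta2|x-y|^2$ is strongly convex with modulus $\beta-\kappa(0)>0$ wherever it is finite, and a minimiser exists whenever $\hat v_0(x)<\infty$. Finiteness holds $\mu_0$-a.e. by \eqref{hatpsiintegrability}. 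Uniqueness follows from strong convexity.

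The main obstacle is that $\hat u_T$ might take the value $+\infty$ on part of $\R^d$, because $\hat\phi$ may grow like $\frac{|y|^2}{2\eta T}$. So $\hat u_T$ is a proper lsc $\kappa(0)$-semiconvex function, and its domain is convex. For Lipschitz continuity, the first-order condition is $\beta(x-\msY_0(x))\in\partial\hat u_T(\msY_0(x))$. Strong monotonicity of $\partial(\hat u_T+\frac\beta2|\cdot|^2)$ with modulus $\beta-\kappa(0)$ yields
\begin{equation}
|\msY_0(x)-\msY_0(x')|\le\frac{\beta}{\beta-\kappa(0)}|x-x'|=(1+\beta T)|x-x'|.
\end{equation}
We then extend $\msY_0$ by Kirszbraun's theorem.

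\textbf{(iii).} For $\hat v_s=\mathbf T_\beta^+[\hat u_{T-s}]$ with $s\le t$, so that $T-s\ge T-t>0$, we argue as follows. The upper bound is $\hat v_s\le\hat u_{T-s}(0)+\frac\beta2|x|^2$. For the lower bound we use \eqref{eq:u_low_bound}, which gives $\hat u_r+\frac\beta2|\cdot|^2\ge -c+\frac{\beta^2}{4r}|\cdot|^2$. Computing the inf-convolution as in Step~3b of Section~\ref{sec:dualattainment} gives a quadratic lower bound uniform in $r\in[T-t,T]$. This yields $|\hat v/w|_{\L^\infty[0,t]}<\infty$, but it requires a uniform upper bound on $\hat u_{T-s}(0)$ for $T-s\in[T-t,T]$. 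We would obtain it from $\hat v_s=V^{\hat\psi}_s$ and the finiteness of $\hat v_0$ at one point, propagated by the Gaussian comparison of Lemma~\ref{lem:HC_Moreau} Step~1.

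For the gradient on $[t,T)$, i.e.\ at times $r=T-s\in(0,T-t]$, we have $\nabla\hat u_r=\beta(\msX_s-{\rm id})$. Semiconvexity gives the one-sided Hessian bound $D^2\hat u_r\succeq-\kappa I$. Combined with the two-sided quadratic bounds \eqref{eq:u_low_bound}--\eqref{eq:u_up_bound}, it yields linear growth of $\nabla\hat u_r$ by the standard semiconvex gradient estimate $|\nabla f(x)|\le\sup_{B_{1+|x|}(x)}|f|/(1+|x|)+C(1+|x|)$. This holds uniformly for $r\in[\epsilon,\eta T)$. Larger $r$ are handled by the semigroup property $e^{\hat u_r}=\Nc_{r-r_0}*e^{\hat u_{r_0}}$, which preserves linear growth of the score.
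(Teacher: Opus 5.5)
\textbf{Parts (i) and (ii).} These are sound, up to two slips. In (i) you take the semiconvexity constant $\kappa(0)=\beta/(1+\beta T)$ from the $\beta$-convexity of $\hat\phi$, where the paper uses $1/T$ from log-Laplace convexity. This removes the need for $\beta T>1$. Your subdifferential formulation also correctly allows $\hat u_T=+\infty$ off a convex domain, a case the paper's gradient first-order condition passes over. However, the Lipschitz constant is $\beta/(\beta-\kappa(0))=1+\frac1{\beta T}$, not $1+\beta T$. In (ii) your argument is the dual face of the paper's (finiteness of $g_0^*$ forces coercivity of $g_0$), but the characterization has a sign slip. With $k:=\frac\beta2|\cdot|^2-\hat\psi=g_0^*(\beta\,\cdot)$, $y$ is a minimiser iff $\beta y\in\partial k(x)$, equivalently $\beta(x-y)\in\partial^+\hat\psi(x)$; test with $\hat\phi=0$. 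Nonemptiness and compactness are unaffected.

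\textbf{The gap: the gradient bound in (iii).} The lemma is used in Step~3 of the primal proof to solve the SDE for $Y$ on $[0,\tau]$, whose drift is $\nabla\hat u_{T-s}$. So what is needed is linear growth of $\nabla\hat u_r$ at \emph{heat} times $r\in[T-\tau,T]$, bounded away from $0$ and up to $T$. You instead read the range as heat times $(0,T-t]$. In either reading (the statement is for all $t<T$) you must handle $r\ge\eta T$, and there the argument breaks. The quadratic upper bound \eqref{eq:u_up_bound} that feeds your semiconvex gradient estimate only holds for $r<\eta T$. The claim that the heat semigroup preserves linear growth of the score is false in general. For $e^{u_{r_0}}=e^{a|z|^2}$ the score $2az$ is linear, yet $\Nc_\rho*e^{u_{r_0}}$ has score $\frac{2a}{1-2a\rho}\,x$ and is infinite for $\rho\ge\frac1{2a}$. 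Quantitatively, $\nabla\hat u_r(x)$ is the mean of $\nabla\hat u_{r_0}(x+W_{r-r_0})$ under the $e^{\hat u_{r_0}}$-tilted Gaussian. Bounding that tilted first moment linearly in $|x|$ requires the quadratic growth rate of $\hat u_{r_0}$ to be below $\frac1{2(r-r_0)}$. With \eqref{eq:u_up_bound} this holds only when $r<\eta T$, so the semigroup step gains nothing.

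\textbf{The fix.} The missing ingredient is the one the paper uses: a quadratic upper bound for $\hat u_r$ at heat times $r<T$, obtained by Gaussian comparison with one finite value $\hat u_T(y_0)<\infty$ (Step~1 of Lemma~\ref{lem:HC_Moreau}, \eqref{eq:positive-time-finite}). Concretely, $\hat u_r(x)\le\hat u_T(y_0)+\frac d2\log\frac Tr+\frac{|x-y_0|^2}{2(T-r)}$. The paper combines this with the convexity of $\hat u_r+\frac{\kappa_\tau}{2}|\cdot|^2$, an affine lower bound from a bounded subgradient at one point, and difference quotients of step $1+|x|$. This bound is uniform only for $r$ in compact subsets of $(0,T)$. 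The endpoint $r=T$ needs finiteness of $\hat u_T$ with a quadratic bound, which the paper's proof also does not address explicitly. The same comparison gives the uniform upper bound on $\hat u_{T-s}$ you need for $\hat v$, provided you centre it at $y_0$ rather than $0$, since $\hat u_T(0)$ need not be finite. The identity $\hat v_s=V^{\hat\psi}_s$ you invoke there is not available (only $V^{\hat\psi}\le\hat v$ is proved), and it is not needed.
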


\proof (i) By Step~{\bf 3(c)} in the proof of Theorem~\ref{thm:main} {\rm (ii-a)} in Section~\ref{sec:dualattainment}, we have
$\hat v_0=\mathbf{T}_\beta^+[\hat u_T]\in \L^1(\mu_0)$ and in particular
$\hat v_0(x)<\infty$ for $\mu_0$-a.e.\ $x$. Denote, for such $x$,
$F_x(y):=\hat u_T(y)+\frac\beta2|x-y|^2$ .
Moreover
$\hat u_T(x)+\frac{|x|^2}{2T}
= -\frac d2\log(2\pi T)
+ \log\!\int_{\R^d} e^{\hat\phi(y)-\frac{|y|^2}{2T}+\frac{x\cdot y}{T}}\,\d y$
is convex as a log-Laplace transform. Then
\begin{equation}\label{eq:hess_uT_final}
D^2\hat u_T\succeq -\frac1T\ I_d
~\mbox{and therefore}~
D^2F_x(y)\succeq \Big(\beta-\frac1T\Big)I_d\succ0,
\end{equation}
as $\beta T>1$. Thus $F_x$ attains a unique minimizer, and since $(x,y)\mapsto F_x(y)$ is Borel, the measurable maximum theorem yields a Borel map $\msY_0$.

For $x_i\in\R^d$ and $y_i:=\msY_0(x_i)$, $i=1,2$, the first-order condition for the minimization writes
$x_i=y_i+\beta^{-1}\nabla \hat u_T(y_i)$. Denote $\delta x:=x_1-x_2$, $\delta y:=y_1-y_2$, then by subtracting and taking the scalar product with $\delta y$, we obtain $
\delta x\cdot\delta y
=\!
|\delta y|^2+\frac1\beta\big(\nabla\hat u_T(y_1)\!-\!\nabla\hat u_T(y_2)\big)\!\cdot\! \delta y
\ge
(1-\frac1{\beta T})|\delta y|^2,
$
by \eqref{eq:hess_uT_final}. By the Cauchy-Schwarz inequality, this implies that $|\delta y|\le (1-\frac1{\beta T})^{-1}|\delta x|$, which proves the required Lipschitz continuity of $\msY_0$. 

\vspace{2mm}
\noindent (ii)  As $\hat\phi$ is $\beta$-convex, the map $g_0:=\hat\phi+\frac\beta2|.|^2$ is proper, l.s.c. and convex.
We have 
$\hat\psi(x)=
\frac\beta2|x|^2-g_0^*(\beta x)$, 
By the first duality result, the maximizing sequence may be chosen so that
$\psi_{n_k}:=\Tc_\beta^+[\phi_{n_k}]\in C_w$, hence each $\psi_{n_k}$ is finite on $\R^d$.
Together with the local uniform convergence $\psi_{n_k}\to \hat\psi=\Tc_\beta^+[\hat\phi]$, this implies that $\hat\psi$ is finite on all of $\R^d$.
Hence $g_0^*(p)<\infty$ for all $p\in\R^d$, and therefore $g_0$ is coercive, and the map $y\mapsto g_0(y)-\beta x\!\cdot\!y$ is coercive and l.s.c. for each fixed $x$ and, as such, attains its minimum with compact argmin set $\msY_T(x)$.

\vspace{2mm}
\noindent (iii) Fix now $\tau<T$ and let us prove for some constant $C_\tau$ that
\begin{align}
|\nabla \hat u_s(x)|\le C_\tau(1+|x|),
\qquad s\in[T-\tau,T],\ x\in\R^d,
\label{eq:strip_grad_u_final_step6}
\\
|\hat v_t(x)|\le C_\tau(1+|x|^2),
\qquad 0\le t\le \tau,\ x\in\R^d,~\mbox{for all}~\tau<T.
\label{eq:v_quadratic_strip}
\end{align}
Set
$\kappa_\tau:=\frac{\beta}{1+\beta(T-\tau)}$
and define
$G_s(x):=\hat u_s(x)+\frac{\kappa_\tau}{2}|x|^2,
\ s\in[T-\tau,T]$.
By \eqref{eq:semiconvex-u}, each $G_s$ is convex. Moreover,
\eqref{eq:positive-time-finite} implies
$G_s(x)\le C_\tau(1+|x|^2),
\  s\in[T-\tau,T],\ x\in\R^d$.
Since $s\mapsto \hat u_s(0)$ is continuous on $[T-\tau,T]$, we also have
$|G_s(0)|\le C_\tau,\  s\in[T-\tau,T]$.
Choose any $p_s\in \partial G_s(0)$. For every unit vector $e$,
$p_s\cdot e\le G_s(e)-G_s(0),
\ -p_s\cdot e\le G_s(-e)-G_s(0)$.
Hence
$|p_s|\le C_\tau,\  s\in[T-\tau,T]$.
By convexity,
$G_s(x)\ge G_s(0)+p_s\cdot x\ge -C_\tau(1+|x|)$.
Now fix $x\in\R^d$, $s\in[T-\tau,T]$, and a unit vector $e$, and set
$r:=1+|x|$.
Since $G_s$ is convex and differentiable,
$\nabla G_s(x)\cdot e\le \frac{G_s(x+re)-G_s(x)}{r},
\ -\nabla G_s(x)\cdot e\le \frac{G_s(x-re)-G_s(x)}{r}$.
Using the quadratic upper bound at $x\pm re$ and the affine lower bound at $x$, we obtain
$|\nabla G_s(x)\cdot e|
\le \frac{C_\tau(1+|x\pm re|^2)+C_\tau(1+|x|)}{r}$.
Since $r=1+|x|$ and $|x\pm re|\le |x|+r\le 1+2|x|$, this yields
$|\nabla G_s(x)\cdot e|\le C_\tau(1+|x|)$.
Taking the supremum over $|e|=1$ yields
$|\nabla G_s(x)|\le C_\tau(1+|x|)$.
Since
$\nabla \hat u_s(x)=\nabla G_s(x)-\kappa_\tau x$,
we obtain \eqref{eq:strip_grad_u_final_step6}.

It remains to prove \eqref{eq:v_quadratic_strip}. For the upper bound, choose $y=0$ in the Moreau envelope defining
$\hat v_t=\mathbf T_\beta^+[\hat u_{T-t}]$. Since $T-t\in[T-\tau,T]$ for $0\le t\le\tau$, and since $s\mapsto \hat u_s(0)$ is bounded above on $[T-\tau,T]$, we get
$\hat v_t(x)
\le \hat u_{T-t}(0)+\frac{\beta}{2}|x|^2
\le C_\tau+\frac{\beta}{2}|x|^2,
\  0\le t\le \tau$.

For the lower bound, use the Hessian estimate from Lemma~\ref{lem:HC_Moreau}{\rm (3-a)}:
$D^2\hat v_t \succeq -\frac1{T-t}I_d \succeq -\frac1{T-\tau}I_d,
\  0\le t\le\tau$.
Set
$H_t(x):=\hat v_t(x)+\frac{|x|^2}{2(T-\tau)},\  0\le t\le\tau$.
Then $H_t$ is convex. Since $\hat v_t(0)=\mathbf T_\beta^+[\hat u_{T-t}](0)$ is bounded from below on $[0,\tau]$, and since the subgradients of $H_t$ at $0$ are bounded on $[0,\tau]$ by the same local convexity argument used above, there exists $C_\tau<\infty$ such that
$H_t(x)\ge -C_\tau(1+|x|),
\  0\le t\le\tau,\ x\in\R^d$.
Consequently,
$\hat v_t(x)\ge -C_\tau(1+|x|^2),
\ 0\le t\le\tau,\ x\in\R^d$.
Together with the upper bound, this proves \eqref{eq:v_quadratic_strip}.
\ep

\vspace{5mm}

\noindent {\bf Proof of Theorem \ref{thm:main} (ii-b-c-d)}
\smallskip \noindent \textit{{\bf 1.}}
Set
$m_0:={\msY_0}_\#\mu_0$ and $\nu_0(dy):=e^{-\hat u_T(y)}m_0(dy)$.
Since $\hat v_0=\mathbf T_\beta^+[\hat u_T]$ is finite $\mu_0$-a.e. and $\msY_0$ is the corresponding minimizer, the measure $\nu_0$ is well-defined, non-zero, and $\sigma$-finite. Define
\begin{equation}\label{eq:def_nu0_nuT_mT_final}
\nu_T:=\nu_0*\Nc_T,
\qquad m_T(dy):=e^{\hat\phi(y)}\nu_T(dy).
\end{equation}
Then
$m_0=e^{\hat u_T}\nu_0,
\ m_T=e^{\hat\phi}\nu_T$.
Since $\mu_0\in\Pc_2(\R^d)$ and $\msY_0$ is Lipschitz by Lemma \ref{lem:primalattainment} (i), it follows that
$m_0={\msY_0}_\#\mu_0\in\Pc_2(\R^d)$. Moreover, by Tonelli's theorem and the definition of $\hat u_T$,
\begin{equation}
m_T(\R^d)
= \nu_T(e^{\hat\phi})
= \nu_0(\Nc_T*e^{\hat\phi})
= m_0(e^{\hat u_T}e^{-\hat u_T})
= 1.
\end{equation}
Hence $m_T\in\Pc(\R^d)$. Since $\nu_T$ is absolutely continuous with respect to $\Leb$ by Gaussian convolution, possibly with an extended density, and since $m_T=e^{\hat\phi}\nu_T$ is a finite measure, we also have $m_T\ll \Leb$.

\smallskip \noindent \textit{{\bf 2.}}
Fix $g\in C_b(\R^d)$, let
$\hat\phi_\varepsilon:=\hat\phi+\varepsilon g,
\ \hat\psi_\varepsilon:=\Tc_\beta^+[\hat\phi_\varepsilon]$,
$\tilde\phi_\varepsilon:=\Tc_\beta^-\!\big(\Tc_\beta^+[\hat\phi_\varepsilon]\big),
\ \bar\phi_\varepsilon:=\tilde\phi_\varepsilon-\tilde\phi_\varepsilon(0)$.
By the Moreau envelope property, $\tilde\phi_\varepsilon$ is $\beta$-convex,
$\tilde\phi_\varepsilon\le \hat\phi_\varepsilon,
\ \Tc_\beta^+[\tilde\phi_\varepsilon]
= \Tc_\beta^+[\hat\phi_\varepsilon]$.
Since $g$ is bounded and $\hat\phi\in\Cc_w^{\rm conv}$, this gives
$\bar\phi_\varepsilon\in \Cc^{\rm conv}_{w}$ and $\bar\phi_\varepsilon(0)=0$.
Moreover $\mathfrak{J}$ is invariant under addition of constants, so
$\mathfrak{J}(\bar\phi_\varepsilon)=\mathfrak{J}(\tilde\phi_\varepsilon)$.
Finally, as $\tilde\phi_\varepsilon\le \hat\phi_\varepsilon$ and
$\Tc_\beta^+[\tilde\phi_\varepsilon]=\Tc_\beta^+[\hat\phi_\varepsilon]$, we have
$\mathfrak{J}(\tilde\phi_\varepsilon)
\ge \mathfrak{J}(\hat\phi_\varepsilon)$.
Since $\hat\phi$ maximizes $\mathfrak{J}$, we obtain, for all small enough $|\varepsilon|$,
\begin{equation}\label{eq:perturb_opt_final}
\mathfrak{J}(\hat\phi_\varepsilon)\le \mathfrak{J}(\bar\phi_\varepsilon)=\mathfrak{J}(\tilde\phi_\varepsilon)\le \mathfrak{J}(\hat\phi).
\end{equation}

\smallskip
\noindent\emph{Terminal derivatives.}
By Lemma \ref{lem:primalattainment} (ii), for each $x\in\R^d$ the set $\msY_T(x)$ is nonempty and compact. Hence, by
Danskin's theorem,
$\partial_+\hat\psi_\varepsilon\big|_{\varepsilon=0}(x)=\min_{y\in\msY_T(x)}g(y),
\ \partial_-\hat\psi_\varepsilon\big|_{\varepsilon=0}(x)=\max_{y\in\msY_T(x)}g(y)$.
Moreover, since $g$ is bounded,
$\hat\phi(y)\!-\!|\varepsilon| |g|_\infty
\le \hat\phi_\varepsilon(y)
\le \hat\phi(y)\!+\!|\varepsilon||g|_\infty,
\ y\in\R^d$,
and therefore, by monotonicity of $\Tc_\beta^+$,
$\hat\psi(x)-|\varepsilon||g|_\infty
\le \hat\psi_\varepsilon(x)
\le \hat\psi(x)+|\varepsilon||g|_\infty,
\  x\in\R^d$.
Thus
$\left|
\frac{\hat\psi_\varepsilon(x)-\hat\psi(x)}{\varepsilon}
\right| \le |g|_\infty,
\  \varepsilon\neq0$. Then dominated convergence yields
$$
\partial_\eps\mu_T(\hat\psi_\eps) \big|_{\varepsilon=0^+}
= \int_{\R^d}\min_{y\in\msY_T(x)}g(y)\,\mu_T(dx),
\qquad \partial_\eps\mu_T(\hat\psi_\eps) \big|_{\varepsilon=0^-}
= \int_{\R^d}\max_{y\in\msY_T(x)}g(y)\,\mu_T(dx).
$$

\smallskip
\noindent\emph{Initial derivative.}
Define
$u_{\varepsilon,T}:=\log(\Nc_T*e^{\hat\phi+\varepsilon g}),
\ v_{\varepsilon,0}:=\Tc_\beta^+[u_{\varepsilon,T}]$.
By Lemma \ref{lem:primalattainment} (i), the minimizer $\msY_0(x)$ is unique for $\mu_0$-a.e.\ $x$, hence Danskin's theorem gives
$\partial_\varepsilon v_{\varepsilon,0}(x)\big|_{\varepsilon=0}
= \partial_\varepsilon u_{\varepsilon,T}(\msY_0(x))\big|_{\varepsilon=0}
\ \mu_0\text{-a.e.\ }x$.
Since $g$ is bounded, differentiation under the Gaussian integral is justified and yields
$G(y):=\partial_\varepsilon u_{\varepsilon,T}(y)\big|_{\varepsilon=0}
= \frac{\Nc_T*(g e^{\hat\phi})(y)}{h_T(y)},
\  |G(y)|\le |g|_\infty$.
Also,
$e^{-|\varepsilon||g|_\infty}(\Nc_T*e^{\hat\phi})(y)
\le (\Nc_T*e^{\hat\phi+\varepsilon g})(y)
\le e^{|\varepsilon||g|_\infty}(\Nc_T*e^{\hat\phi})(y)$,
so
$|u_{\varepsilon,T}(y)-u_{0,T}(y)|\le |\varepsilon||g|_\infty,
\ y\in\R^d$.
By monotonicity of $\Tc_\beta^+$, this implies
$|v_{\varepsilon,0}(x)-v_{0,0}(x)|\le |\varepsilon||g|_\infty,
\ x\in\R^d$.
Hence dominated convergence applied to the difference quotients gives
\begin{equation}
\partial_\varepsilon\mu_0(v_{\varepsilon,0})\Big|_{\varepsilon=0}
= \mu_0\big(G(\msY_0)\big)
= m_0(G)
= \nu_0\big(\Nc_T*(g e^{\hat\phi})\big)
= \nu_T\big(ge^{\hat\phi}\big)
= m_T(g),
\end{equation}
by \eqref{eq:def_nu0_nuT_mT_final} and Tonelli's theorem.

\smallskip
\noindent\emph{Conclusion.}
Divide \eqref{eq:perturb_opt_final} by $\varepsilon$ and let $\varepsilon\to0^\pm$. Using the derivative formulas above, we obtain, for every $g\in C_b(\R^d)$,
\begin{equation}\label{eq:sandwich_final}
\mu_T\big(\min_{\msY_T}g\big)
\le m_T(g)
\le \mu_T\big(\max_{\msY_T}g\big).
\end{equation}

\smallskip \noindent \textit{{\bf 2.}} Let $\Gamma:=\{(x,y):y\in\msY_T(x)\}$ be the graph of $\msY$. Since $m_T\ll \Leb$ and $\hat\phi$ is $\beta$-convex, $\hat\phi$ is differentiable $m_T$-a.e. At every differentiability point $y$ such that $(x,y)\in\Gamma$, then
$y$ minimizes $z\mapsto \hat\phi(z)+\frac\beta2|x-z|^2$, so the first-order optimality condition yields
$\nabla\hat\phi(y)+\beta(y-x)=0$. Thus
$x=\msX_T(y):=y+\beta^{-1}\nabla\hat\phi(y)$.
Therefore, for $m_T$-a.e.\ $y$, the section
$\Gamma^y:=\{x\in\R^d:(x,y)\in\Gamma\}$
is contained in the singleton $\{\msX_T(y)\}$. Our objective in this step is to show that
\begin{equation}\label{mu_T=Xc_diezmT}
\mu_T={\msX_T}_\# m_T.
\end{equation}
Since $\hat\phi$ is l.s.c. the graph $\Gamma:=\{(x,y):y\in\msY_T(x)\}$ is closed and $\Gamma=\{c_\Gamma=0\}$, where
$c_\Gamma(x,y):=1\wedge \dist((x,y),\Gamma)$ is bounded, continuous, and nonnegative.
By Kantorovich duality, using the standard notation $f\oplus h=f(x)+h(y)$, we have
\begin{equation}
0\le
\inf_{\pi\in\Pi(\mu_T,m_T)}\pi(c_\Gamma)
=
\sup_{(f,h)\in\msD}\mu_T(f)+m_T(h),
~~\msD:=\{(f,h)\in C_b\times C_b:\ f\oplus h\le c_\Gamma\big\}.
\end{equation}
For $(f,h)\in\msD$, we have
$\max_{y\in\msY_T(x)} h(y)\le -f(x),\  x\in\R^d$.
Applying the right-hand side inequality in \eqref{eq:sandwich_final} with $g=h$, we get $
m_T(h)
\le
\int \max_{y\in\msY_T(x)}h(y)\ \mu_T(\d x)
\le
-\mu_T(f)$, hence $\le 0$. By arbitrariness of $(f,h)\in\msD$, we deduce that
$\inf_{\pi\in\Pi(\mu_T,m_T)}\int c_\Gamma\ d\pi=0$.
Since the cost is bounded and continuous, an optimal coupling $\pi^\star\in\Pi(\mu_T,m_T)$ exists and satisfies $\pi^\star(\Gamma)=1$, as the minimum value is $0$.

Disintegrate $\pi^\star$ with respect to its second marginal $m_T$:
$\pi^\star(\d x,\d y)=\pi^\star_y(\d x)\ m_T(\d y)$.
Since $\pi^\star(\Gamma)=1$, for $m_T$-a.e.\ $y$ the measure $\pi^\star_y$ is supported on
$\Gamma^y$, hence on $\{\msX_T(y)\}$. Thus
$\pi^\star_y=\delta_{\msX_T(y)}
\ \text{for }m_T\text{-a.e.\ }y$.
Taking first marginals, we obtain \eqref{mu_T=Xc_diezmT}.

\smallskip \noindent \textit{{\bf 3.}}
Let $\mathbf R$ be the law of a Brownian motion $Y$ on $[0,T]$ with initial law $m_0$.
Define
$Z_T:=e^{\hat\phi(Y_T)-\hat u_T(Y_0)}$.
Then
$\E^{\mathbf R}[Z_T]
= \E^{\mathbf R}\Big[ e^{-\hat u_T(Y_0)}
\E^{\mathbf R}\big[e^{\hat\phi(Y_T)}\mid Y_0\big]\Big] =1$.
We may therefore introduce the equivalent probability measure
$d\hat \Q:=Z_T\,d\mathbf R$. For any bounded Borel $f$, we have
$\E^{\hat \Q}[f(Y_T)]
=\E^{\mathbf R}\!\big[f(Y_T)e^{\hat\phi(Y_T)-\hat u_T(Y_0)}\big]
=m_T(f)$.
Thus
$Y_0\sim m_0
~\mbox{and}~Y_T\sim m_T~\mbox{under}~\hat\Q$.
The density process is
$Z_t:=\E^{\mathbf R}[Z_T|\mathcal F_t]
= e^{\hat u_{T-t}(Y_t)-\hat u_T(Y_0)},
\  0\le t<T$.
As $(t,y)\mapsto \hat u_{T-t}(y)$ solves the backward heat equation, It\^o's formula gives
$dZ_t=Z_t\nabla\hat u_{T-t}(Y_t)\cdot dW_t^{\mathbf R},
\  t<T$.
It follows from Girsanov's theorem that, under $\hat\Q$,
\begin{equation}\label{eq:Y_SDE_global}
Y_t
=Y_0+\int_0^t \nabla\hat u_{T-s}(Y_s)\,ds+W_t^{\hat\Q},
\qquad 0\le t<T.
\end{equation}
Define, for $0\le t<T$, $X_t:=Y_t+\frac1\beta\nabla\hat u_{T-t}(Y_t)$. Then $X_0=\msX_0(Y_0)$, so $X_0\sim\mu_0$ under $\hat \Q$, and $X$ has continuous paths on $[0,T)$ by the regularity of $\hat u$ on positive heat times. Moreover, for every $\tau<T$, since $T-s\in[T-\tau,T]$ for $0\le s\le\tau$ and
$\nabla\hat u$ has linear growth on this heat-time strip by Lemma~\ref{lem:primalattainment}{\rm (iii)}, the BDG inequality and Gronwall's lemma yield
\begin{equation}\label{eq:X_global_second_moment}
\E^{\hat \Q}\!\Big[\sup_{0\le s\le \tau}|Y_s|^2\Big]
+ \E^{\hat \Q}\!\Big[\sup_{0\le s\le \tau}|X_s|^2\Big]
<\infty, \qquad \tau<T.
\end{equation}

\smallskip \noindent \textit{{\bf 5.}}
Recall that
$\hat v_t=\mathbf T_\beta^+[\hat u_{T-t}],
\ \msX_t(y)=y+\frac1\beta\nabla\hat u_{T-t}(y),
\ X_t=\msX_t(Y_t), \quad t<T$.
Applying It\^o's formula to $X_t=\msX_t(Y_t)$ on every compact subinterval of $[0,T)$, and using the equation satisfied by $\hat u$, gives
$dX_t=a_t\,dt+\sigma_t\,dW_t,
\  0\le t<T$,
where
\begin{equation}\label{eq:feedback_coeffs_global}
\gamma_t=(a_t,\sigma_t),
\qquad
a_t=\nabla\hat v_t(X_t),
\qquad
\sigma_t=\big(I_d-\beta^{-1}D^2\hat v_t(X_t)\big)^{-1}.
\end{equation}
These are precisely the maximizers of the Hamiltonian $H(\nabla\hat v_t,D^2\hat v_t)$.
Fix $n\ge1$, $\tau<T$, and define
$\rho_n:=\tau\wedge\inf\{t\le \tau:\ |X_t|\ge n\}$.
Fix also $0<\varepsilon<\tau$. Since $\hat v\in C^{1,2}(Q_T)$, It\^o's formula on
$[\varepsilon\wedge\rho_n,\rho_n]$ gives
\begin{align}
\hspace{-5mm}
\E^{\hat\Q}\big[\hat v_{\rho_n}(X_{\rho_n})
                         -\hat v_{\varepsilon\wedge\rho_n}(X_{\varepsilon\wedge\rho_n})
                   \big]
&=
\E^{\hat\Q}\int_{\varepsilon\wedge\rho_n}^{\rho_n}\!\!\!
\Big(\partial_t\hat v+a_t\!\cdot\!\nabla\hat v+\tfrac12\sigma_t\sigma_t^\top\!:\!D^2\hat v\Big)(t,X_t)\d t
\\
&=\E^{\hat \Q}\!\int_{\varepsilon\wedge\rho_n}^{\rho_n} c(\gamma_t)\ \d t.
\label{eq:stopped_cost_identity_global}
\end{align}
Since $\beta T>1$, choose $\delta>0$ so small that
$\beta-(T-\delta)^{-1}>0$. Then for $t\in[0,\delta]$ and $x$ in a fixed
compact set $K\subset\R^d$, the function
$y\mapsto \hat u_{T-t}(y)+\frac{\beta}{2}|x-y|^2$
is uniformly strongly convex, hence has a unique minimizer. Moreover, a Taylor
expansion at $y=0$ shows that these minimizers remain in a compact ball $B_R$,
uniformly for $t\in[0,\delta]$ and $x\in K$. Since
$\hat u_{T-t}\to \hat u_T$ uniformly on $B_R$ as $t\downarrow0$, it follows that
\begin{equation}
\hat v_t(x)=\inf_y\Big(\hat u_{T-t}(y)+\frac{\beta}{2}|x-y|^2\Big)\to
\inf_y\Big(\hat u_T(y)+\frac{\beta}{2}|x-y|^2\Big)=\hat v_0(x),
~\mbox{uniformly on}~K.
\end{equation}

Letting $\varepsilon\downarrow0$, local uniform convergence of $v_t$ to $v_0$ near $t=0$
and continuity of $X$ give
$\hat v_{\varepsilon\wedge\rho_n}(X_{\varepsilon\wedge\rho_n})\to\hat v_0(X_0)
\ \hat \Q\text{-a.s.}$
Moreover the convergence is dominated by an integrable random variable, since on
$\{\rho_n>0\}$ the argument stays in the compact set $[0,T-\frac1n]\times \overline B_n$,
while on $\{\rho_n=0\}$ the term is exactly $\hat v_0(X_0)\in \L^1(\hat \Q)$.
We then deduce from \eqref{eq:stopped_cost_identity_global} that $
\E^{\hat \Q}\!\big[\hat v_{\rho_n}(X_{\rho_n})\big]
= \E^{\hat \Q}[\hat v_0(X_0)]
+ \E^{\hat \Q}\!\int_0^{\rho_n} c(\gamma_t)\ dt,
$ and by combining the quadratic growth property of $\hat v$ in Lemma \ref{lem:primalattainment} with \eqref{eq:X_global_second_moment}, we get by dominated and monotone convergence:
\begin{equation}\label{step5v0}
\E^{\hat \Q}[\hat v_\tau(X_\tau)]
= \E^{\hat \Q}[\hat v_0(X_0)]
+ \E^{\hat \Q}\!\Big[\int_0^\tau c(\gamma_t)\ \d t\Big]
=
\mu_0(v_0)+\E^{\hat \Q}\!\Big[\int_0^\tau c(\gamma_t)\ \d t\Big],
\  \tau<T.
\end{equation}
In particular,
$\E^{\hat \Q}\!\int_0^\tau c(\gamma_t)\ \d t<\infty$ for all $\tau<T$.

\smallskip \noindent \textit{{\bf 6.}}
Define
$F_s(y):=\hat u_s(y)+\frac\beta2|y|^2,\  s>0,
$ and $F(y):=\hat\phi(y)+\frac\beta2|y|^2$.
Recall that
$F_s\to F
 $ locally uniformly on $\R^d\text{ as }s\downarrow0$ and
\begin{equation}\label{eq:X_as_gradient_global}
\beta X_t=\nabla F_{T-t}(Y_t),\qquad t<T.
\end{equation}
Since $Y$ has continuous paths and $Y_T\sim m_T$, we have $Y_t\to Y_T$ a.s.\ as
$t\uparrow T$. Because $m_T\ll\Leb$ and $F$ is finite convex, $F$ is
differentiable at $Y_T$ for $\hat \Q$-a.e.\ sample point.

Fix an arbitrary sequence $t_n\uparrow T$. Set $f_n:=F_{T-t_n}$, $f:=F$, $x_n:=Y_{t_n}$, and $x:=Y_T$. Since
$f_n\to f$ locally uniformly, $x_n\to x$ $\hat\Q$-a.s., each $f_n$ is differentiable,
and $f$ is differentiable at $x$ for $\hat\Q$-a.e.\ sample point, the standard stability
result for gradients of convex functions yields
$\nabla F_{T-t_n}(Y_{t_n})\to \nabla F(Y_T)$, $\hat\Q$-a.s.
Hence, by \eqref{eq:X_as_gradient_global},
$X_{t_n}
= \beta^{-1}\nabla F_{T-t_n}(Y_{t_n})
\to \beta^{-1}\nabla F(Y_T)
\  \hat \Q$-a.s.
By Step~4, for $m_T$-a.e.\ $y$ we have
$\msX_T(y)=\beta^{-1}\nabla F(y)$.
Since $Y_T\sim m_T$, it follows that
$X_{t_n}\to \msX_T(Y_T)
\  \hat \Q$-a.s.
Since the sequence $t_n\uparrow T$ was arbitrary, we conclude that
$X_t\to X_T:=\msX_T(Y_T)
\  \text{as }t\uparrow T,\ \hat \Q$-a.s.
Thus $X$ admits a continuous extension to $[0,T]$. As $Y_T\sim m_T$ under $\hat \Q$ and $\mu_T={\msX_T}_\# m_T$ by Step 4, it follows that
$\Law_{\hat \Q}(X_T)=\mu_T$. Consequently
$$
\hat \P:=\Law_{\hat \Q}(X)
\in \Pc(\mu_0,\mu_T).
$$
\textit{{\bf 7.}}
Fix $\tau\in(0,T)$ and let $(\hat \P^\omega)_\omega$ be a regular conditional probability distribution of $\hat \P$ given $\mathcal F_\tau$. We first observe that for
$\hat \P$-a.e.\ $\omega$, the shifted canonical process on $[\tau,T]$ under $\hat \P^\omega$
starts from $X_\tau(\omega)$ and is a continuous semimartingale with absolutely
continuous characteristics $(a_r,\sigma_r)_{r\in[\tau,T)}$.
Therefore, for $\hat \P$-a.e.\ $\omega$, the Step~4 estimate on the shifted
interval $[\tau,T)$ with horizon $T-\tau$ and initial point $X_\tau(\omega)$ give
\begin{equation}\label{eq:conditional_shifted_moment_bound_final}
\E^{\hat \P^\omega}\!\Big[\sup_{\tau\le s<T}|X_s|^2\Big]<\infty.
\end{equation}
Fix now $t\in(\tau,T)$, $n\ge 1$, and define
$\rho_n^{\tau,t}:=t\wedge \inf\{s\in[\tau,t]: |X_s|\ge n\}$.
By the same It\^o calculation as in Step~5, we get by conditioning with respect to $\mathcal F_\tau$ that for $\hat \P$-a.e.\ $\omega$,
\begin{align}
\hat v_\tau(X_\tau(\omega))
&= 
\E^{\hat \P^\omega}\!\big[\hat v_{\rho_n^{\tau,t}}(X_{\rho_n^{\tau,t}})\big]
- \E^{\hat \P^\omega}\!\Big[\int_\tau^{\rho_n^{\tau,t}} c(\gamma_r)\ \d r\Big].
\\
&\longrightarrow
\E^{\hat \P^\omega}[\hat v_t(X_t)]
- \E^{\hat \P^\omega}\!\left[\int_\tau^t c(\gamma_r)\ \d r\right]~~
\mbox{as}~n\to\infty,
\label{eq:conditional_strip_identity_final}
\end{align}
by dominated convergence, due to the quadratic bound
$|\hat v_r(x)|\le C_{\tau,t}(1+|x|^2),
\  r\in[\tau,t],\ x\in\R^d$ in Step 5, together with
$\E^{\hat \P^\omega}\!\Big[\sup_{\tau\le s\le t}|X_s|^2\Big]<\infty
\ \text{for }\hat \P\text{-a.e. }\omega$. 

Next we let $t\uparrow T$. Since Step~6 gives $X_t\to X_T$ $\hat \P^\omega$-a.s., and
$\hat v_t\to\hat\psi$ locally uniformly as $t\uparrow T$, we have
$\hat v_t(X_t)\to \hat\psi(X_T)
\  \hat \P^\omega$-a.s.
Also, choosing $y=0$ in the Moreau envelope yields
$\hat v_t(x)\le \hat u_{T-t}(0)+\frac{\beta}{2}|x|^2$.
For fixed $\tau<T$, the function $t\mapsto \hat u_{T-t}(0)$ is bounded above on $t\in[\tau,T)$, and
\eqref{eq:conditional_shifted_moment_bound_final} implies that
$\sup_{t\in[\tau,T)} \hat v_t(X_t)^+
\le C_\tau+\frac{\beta}{2}\sup_{\tau\le s<T}|X_s|^2
\in \L^1(\hat \P^\omega)
\ \text{for }\hat \P\text{-a.e. }\omega$.
Hence dominated convergence gives
$\E^{\hat \P^\omega}[\hat v_t(X_t)^+]\to \E^{\hat \P^\omega}[\hat\psi(X_T)^+]$.
For the negative parts, Fatou's lemma gives
$\E^{\hat \P^\omega}[\hat\psi(X_T)^-]
\le
\liminf_{t\uparrow T} \E^{\hat \P^\omega}[\hat v_t(X_t)^-]$.
Combining with the monotone convergence on the nonnegative
cost term, we then obtain from \eqref{eq:conditional_strip_identity_final} that for $\hat \P$-a.e.\ $\omega$, $\hat v_\tau(X_\tau(\omega))
+
\E^{\hat \P^\omega}\!\Big[\int_\tau^T c(\gamma_r)\ \d r\Big]
\le
\E^{\hat \P^\omega}[\hat\psi(X_T)]$, and then
$$
\E^{\hat \P}[\hat v_\tau(X_\tau)]
+
\E^{\hat \P}\!\Big[\int_\tau^T c(\gamma_r)\ \d r\Big]
\le
\E^{\hat \P}[\hat\psi(X_T)]
=
\mu_T(\hat\psi),
$$
by the tower property. By \eqref{step5v0}, this implies
$
\E^{\hat \P}\!\left[\int_0^T c(\gamma_r)\ \d r\right]
\le \mu_T(\hat\psi)-\mu_0(\hat v_0)=\mathfrak{J}(\hat\phi)$.
Since $\hat \P\in\Pc(\mu_0,\mu_T)$, we have
$\E^{\hat \P}\!\left[\int_0^T c(\gamma_r)\,\d r\right] \ge {\rm SBB}(\mu_0,\mu_T)
= \mathfrak{J}(\hat\phi)$,
where the last equality follows from the strong duality already proved and the optimality of $\hat\phi$.
Hence
$\E^{\hat \P}\!\left[\int_0^T c(\gamma_r)\ \d r\right]=\mathfrak{J}(\hat\phi)=\mathrm{SBB}(\mu_0,\mu_T)$ by the strong duality of Theorem~\ref{thm:main} established in Section~4.
\ep

\bibliographystyle{plain}

\bibliography{biblioSBB}

\end{document}